  \newcommand{\bfa}{\textbf{a}}
  \newcommand{\bfb}{\textbf{b}}
\DeclareMathAlphabet{\pazocal}{OMS}{zplm}{m}{n}
\tikzset{>=stealth}
\def\BState{\, \text{ such that }\, ate\hskip-\ALG@thistlm}
\newtheorem{thm}{Theorem}[section]
\newtheorem{prop}[thm]{Proposition}
\newtheorem{cor}[thm]{Corollary}
\newtheorem{lem}[thm]{Lemma}
\newtheorem{conj}[thm]{Conjecture}
\newtheorem*{claim}{Claim}
\newtheorem{mainthm}{}
\newtheorem{conv}[thm]{Convention}
\theoremstyle{definition}
\newtheorem{defn}[thm]{Definition}
\theoremstyle{remark}
\newtheorem*{rem}{Remark}
\newtheorem*{examples}{Examples}
\newcommand{\isom}{\textrm{Isom}}
\newcommand{\sD}{{\sf D}}   
\newcommand{\sC}{{\sf C}}   
\newcommand{\sR}{{\sf R}}   
\newcommand{\act}{\curvearrowright}
\newcommand{\pG}{\Lambda{G}}
\newcommand{\p}{\mathcal P}
\newcommand{\f}{\mathscr F}
\newcommand{\U}{ X}
\newcommand{\bU}{\overline{  X}}
\newcommand{\hU}{\partial_h{  X}}
\newcommand{\pU}{\partial{  X}}
\newcommand{\dirac}[1]{{\mbox{Dirac}}{(#1)}}
\newcommand{\e}[1]{\omega_{#1}}
\newcommand{\ax}{\mathrm{Ax}}
 \newcommand{\calN}{\mathcal{N}}
 \newcommand{\RR}{\mathbb{R}}
\newcommand{\pka}{\partial_\kappa}
\newcommand{\from}{\colon\thinspace}
   \newcommand{\thick}{\ensuremath{\operatorname{thick}}\xspace} 
  \newcommand{\go}{{o}}
  \DeclarePairedDelimiterX{\Norm}[1]{\lVert}{\rVert}{#1}
  \newcommand{\ST}{\,\mathbin{\Big|}\,} 
  \newcommand{\param}{{\mathchoice{\mkern1mu\mbox{\raise2.2pt\hbox{$
  \centerdot$}}
  \mkern1mu}{\mkern1mu\mbox{\raise2.2pt\hbox{$\centerdot$}}\mkern1mu}{
  \mkern1.5mu\centerdot\mkern1.5mu}{\mkern1.5mu\centerdot\mkern1.5mu}}}
  \newcommand{\cc}{{\sf c}}
  \newcommand{\mm}{{\sf m}}   
  \newcommand{\nn}{{\sf n}}
  \newcommand{\qq}{{\sf q}}   
  \newcommand{\rr}{{\sf r}}
\newcommand{\proj}{\textbf{d}}
\newcommand{\len }{\ell}
\newcommand{\ywy}[1]{{\color{blue}{#1}}}
\newcommand{\qyl}[1]{{\color{red}{#1}}}
\begin{document}

\title[Genericity of sublinearly Morse directions]{Genericity of sublinearly Morse directions in general metric spaces}

%    Information for first author
\author{Yulan Qing}
\address{Department of Mathematics\\
University of Tennessee at Knoxville\\
1403 Circle Drive, Knoxville, TN 37916, USA}
\email{yqing@utk.edu}

\author{Wenyuan Yang}

%    Address of record for the research reported here
\address{Beijing International Center for Mathematical Research\\
Peking University\\
 Beijing 100871, China
P.R.}
\email{wyang@math.pku.edu.cn}
%    Current address
%\curraddr{Le Département de Mathématiques de la Faculté des
%Sciences d'Orsay, Université Paris-Sud 11, France}
%\email{yabziz@gmail.com}
%    \thanks will become a 1st page footnote.
\thanks{(W.Y.) Partially supported by National Key R \& D Program of China (2025YFA1017500) and  National Natural Science Foundation of China (No. 12131009 and No.12326601)}

%    Information for second author
% \author{Author Two}
% \address{Mathematical Research Section, School of Mathematical Sciences,
% Australian National University, Canberra ACT 2601, Australia}
% \email{two@maths.univ.edu.au}
%\thanks{Support information for the second author.}

%    General info
\subjclass[2000]{Primary 20F65, 20F67}

%\date{Aug. 7th 2022}

\dedicatory{}

\keywords{}

\begin{abstract}
In this paper, we show that for a proper statistically convex-cocompact action on a proper geodesic metric space,  the sublinearly Morse boundary has full Patterson-Sullivan measure in the horofunction boundary.  

\end{abstract}

\maketitle
\setcounter{tocdepth}{2} \tableofcontents

\section{Introduction}

Suppose that a group $G$ admits a proper and  isometric action on a proper
geodesic metric space $(\U, d)$. The group $G$ is assumed to be \textit{non-elementary}: it has  no finite index subgroup isomorphic to the integer group $\mathbb Z$ or the trivial group.   

The contracting property captures a key feature of quasi-geodesics in Gromov hyperbolic spaces, rank-1 geodesics in CAT(0) spaces, thick geodesics in Teichm{\"u}ller spaces, and many other settings. In recent years,   this notion and its variants have  proven fruitful in the study of general metric spaces.

Let $A$ be a closed subset of $\U$, and $\pi_A: \U\to A$ be the   nearest-point projection map. We say that $A$ is \textit{$C$--contracting} for $C\ge 0$ if     $$\mathrm{diam}(\pi_A(B))\le C$$ for any metric closed ball $B$ disjoint from $A$.  An element of infinite order is called \textit{contracting}, if it acts by translation on a contracting quasi-geodesic.

%We now present various applications to the growth problems, and measure theoretical results in CAT(0) groups and mapping class groups.

The contracting boundary for CAT(0) spaces was introduced by Charney–Sultan \cite{ChaSul} as a quasi-isometric invariant, and has attracted considerable attention in recent years (\cite{Co17,QR22}). It has been observed that the contracting boundary is measurably negligible with respect to harmonic measures and conformal measures.
The  underlying set of the contracting boundary  consists of the endpoints of contracting geodesic rays in the visual boundary.

In contrast, Qing and Rafi \cite{QR22} showed that a much larger class of  sublinearly Morse geodesics equipped with an appropriate topology (called the sublinearly Morse boundary $\pka \U$) is still quasi-isometrically invariant. See Definition \ref{kappaBdryDefn} in Section \ref{sec: sublinear morse bdry}. It is of interest to demonstrate that the sublinearly Morse boundary is indeed larger than its quasi-isometrically  invariant predecessors. In \cite{GQR22}, the authors showed that a subset of the sublinearly Morse directions that are \emph{regularly contracting} (see Definition~\ref{FreqContrDefn}) is generic in the Patterson-Sullivan measure on the visual boundary of CAT(0) spaces under some conditions, and on the Thurston boundary of Teichm{\"u}ller spaces. 

The motivation of this paper is to extend the results of \cite{GQR22} to conformal measure on the horofunction boundary of a general proper metric space. Recently, a general theory of a conformal density has been developed independently by Coulon and the second-named author in \cite{Coulon22, YANG22} on the horofunction boundary in presence of contracting elements. A  key assumption in this generalization is  that the given $G \act X$ is a proper, \emph{statistically convex-cocompact (SCC)} action introduced by the second-named author in \cite{YANG10}, with a contracting element. Excluding negatively curved examples, which lie outside the scope of this paper, we mention the following:
\begin{itemize}
    \item CAT(0)-groups, including most of right-angled Coxeter/Artin groups, which act geometrically on CAT(0) spaces  with {rank-1} elements.
    \item 
    Mapping class groups act on Teichm\"{u}ller space endowed with Teichm\"{u}ller metric.       
\end{itemize}

Using  different and more geometrically flavored tools, we show that sublinearly Morse directions form a full measure set with respect to conformal measures on the horofunction boundary. 

Let $\hU$ be the horofunction boundary. Every sublinearly Morse equivalence class $[\gamma]$ in $\pka \U$ contains a representative given by a geodesic ray $\gamma$,  defining a horofunction $\gamma^+$ in $\pU$. Let $[\gamma^+]$ denote the set of horofunctions up to a finite difference with  $\gamma^+$ (see \textsection \ref{subsec: horofunction boundary} for details). To state the result, we call the union of images $\{[\gamma^+]\subset \hU: [\gamma]\in \pka \U\}$    \emph{sublinearly Morse directions} in $\hU$ (up to taking the finite difference relation). %Our main result says that such directions form a generic set   with respect to conformal measures. %\footnote{We do not claim here the assignment $[\gamma]\to\gamma^+$ is well-defined, up to finite difference or sublinear difference.}

\begin{mainthm}[Theorem \ref{HTSThm}]\label{MainThm}
Let $G\act \U$  be a   proper, non-elementary SCC action on a proper geodesic metric space $\U$ with a contracting element. Let $\mu_o$ be the $\e G$--dimensional conformal density on the horofunction boundary $\hU$. Then the set of   sublinearly Morse directions  is a $\mu_o$--full measure subset in $\hU$. 
\end{mainthm}

If $\U$ is a proper CAT(0) space, then there is a unique geodesic ray representative in each sublinearly Morse equivalent class from a fixed basepoint. This gives an injective map from sublinearly Morse boundary $\pka \U$ into the visual boundary (which is canonically homeomorphic to $\hU$ for proper CAT(0) spaces). We thus obtain the following result. This removes certain technical assumptions (e.g. geodesically complete) on $\U$  that were required in a recent result of \cite[Theorem 1.1]{GQR22} to apply the works of Ricks \cite{R17}. 

\begin{mainthm}\label{MainCorollary}
Let $G\act \U$  be a   proper, non-elementary SCC action on a proper CAT(0) space $\U$ with a rank-1 element. Let $\mu_o$ be the $\e G$--dimensional conformal density on the   visual boundary $\hU$. Then  the set of  sublinearly Morse directions is $\mu_o$--full measure. 
\end{mainthm} 

Finally, let us emphasize that \ref{MainThm} holds for any convergence boundary including Thurston boundary of Teichm{\"u}ller spaces. In particular, this gives a different proof of this result in \cite{GQR22}. We refer the reader to Theorem \ref{HTSThm} for  the precise statement. 
%That is to say, the sublinearly Morse directions are ``generic" in a more geometrically flavored concept of randomness.  

%In \cite{GQR22}, the authors define a subset of the sublinearly Morse directions represented by geodesic rays that are \emph{regularly contracting} (see Definition~\ref{FreqContrDefn}). It will be very interesting to understand that on the sublinearly Morse boundaries, are these two measure singular or in the same measure class. 

\subsection*{Historic background and related works}
In his celebrated proof of what is now called the Mostow rigidity theorem, G. Mostow carried out a two-step strategy as follows: 
\begin{enumerate}
    \item 
    Quasi-isometry between the fundamental groups induces a  quasi-conformal homeomorphism called boundary map between their visual boundaries;
    \item 
    Using the ergodicity of the geodesic flow, the quasi-conformal boundary map is in fact conformal. The key feature in higher dimensions which is absent in dimension 2 is the non‑singularity of the map, meaning it sends a subset of positive Lebesgue measure into the target boundary.
\end{enumerate}
The search for a quasi-isometric invariant boundary has been an active research topic, from a perspective of quasi-isometric classification in geometric group theory. The famous counter-example of Croke-Kleiner \cite{CK00} on the visual boundary of CAT(0) spaces generated  considerable interests  to the subclass of hyperbolic-like directions  as the quasi-isometric invariant boundary, as shown in the aforementioned works of Charney-Sultan \cite{ChaSul} and Qing-Rafi-Tiozzo \cite{QRT2}.

Stimulated by Step 2, it is natural to ask whether the Q.I. invariant boundary under construction is generic in a measure of interest. Lebesgue measure on the visual boundary of rank-1 symmetric spaces is an important instance of conformal measures considered in this paper. First construction of such measures appeared in the seminal work of Patterson \cite{Patt} and has been further developed by Sullivan, setting the stage  for fruitful research in last several decades. Recently, the theory of conformal density on the horofunction boundary is generalized to  groups with contracting element independently  by Coulon \cite{Coulon22} and by the second-named author in \cite{YANG22}. This is the main setup of the paper. We refer the reader to these papers and references therein for a detailed discussion.  

Statistically convex-cocompact actions of groups have received wide attention in the last few years. The concept was introduced in \cite{YANG10} to generalize the idea of convex-cocompact subgroups in Kleinian groups to non-hyperbolic settings. Independently, SCC actions were also introduced by Schapira-Tapie \cite{ST21} under the terminology of strongly positively recurrent actions (or manifolds) in dynamical system. The class of SCC  actions encompasses many examples, but forms a strict subclass of actions with purely exponential growth. In many circumstances, the latter is  equivalent to the so-called positively recurrent actions in \cite{PS18}. 

%In particular, in mapping class groups, the notion is strictly larger \cite{??} than convex-cocompact subgroups. SCC subgroups are further studied in \cite{ABCD} and \textcolor{red}{add}   

Another measure of interest is harmonic measure arising from random walk on Poisson boundary.  Recently Qing-Rafi-Tiozzo \cite{QRT2} showed that, when $\kappa(t)=\log t$, the $\kappa$-boundary of the Cayley graph of the mapping class group can be identified with the Poisson boundary of the associated random walks. More general results concerning stationary measures were recently announced by Inhyeok Choi, who in place of our ergodic theoretic and boundary techniques uses a pivoting technique developed by Gou\"ezel. It will be interesting to understand whether, under suitable conditions and when restricted to the corresponding compactification, the hitting measure and Patterson–Sullivan measures supported on sublinearly Morse directions are singular or equivalent.

Let us close the introduction by mentioning certain connection with the following conjecture of Sullivan \cite{Sul}:
\begin{conj}\label{Sullivan}
Suppose that a discrete and isometric action $G\act \mathbb H^n$ is of divergence type. Let $\mu_o$ be the corresponding (unique) Patterson-Sullivan measure supported on $\partial \mathbb H^n$. Then the set of \emph{sublinear limit points} $\xi\in \partial \mathbb H^n$ for which the geodesic ray $\gamma=[o,\xi]$ satisfies 
\begin{align}\label{eq:sublinearlimitset}
\lim_{t\to\infty} \frac{d(\gamma(t), Go) }{t} = 0    
\end{align}
has full $\mu_o$-measure.
\end{conj}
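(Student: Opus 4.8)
\medskip
\noindent\textbf{Proof proposal.} We may assume $G$ is non-elementary, the elementary case being immediate. The plan is to combine the Hopf--Tsuji--Sullivan dichotomy with a soft geometric observation: on a complete hyperbolic orbifold, a geodesic that is \emph{linearly} far from $Go$ at some time must have spent a \emph{definite proportion} of the preceding time moderately far from $Go$. Put $M=\mathbb H^n/G$, a proper complete hyperbolic orbifold since $G$ is discrete; let $\bar o\in M$ be the image of $o$, let $(g_t)$ denote the geodesic flow on $T^1M$, and for $v\in T^1M$ let $\bar\phi(v)$ be the distance in $M$ from the footpoint of $v$ to $\bar o$ (upstairs, $\bar\phi(v)=d(x,Go)$ for $x\in\mathbb H^n$ the footpoint of any lift of $v$); note that $\bar\phi$ is $1$-Lipschitz along every flow line. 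Since $G\act\mathbb H^n$ is of divergence type, Hopf--Tsuji--Sullivan gives that the Bowen--Margulis--Sullivan measure $m$ on $T^1M$ --- which on $T^1\mathbb H^n$, in Hopf coordinates $(\xi_-,\xi_+,t)\in\partial^2\mathbb H^n\times\mathbb R$, lies in the measure class of $\mu_o\otimes\mu_o\otimes\mathrm{Leb}$ --- is conservative and ergodic under $(g_t)$, that $\mu_o$ is non-atomic, and that $\mu_o$-a.e.\ $\xi$ is a conical limit point. Because the property ``$d(\gamma_\xi(t),Go)/t\to0$'' depends $G$-equivariantly on $\xi$ and is unchanged when $\gamma_\xi=[o,\xi]$ is replaced by the forward ray of any bi-infinite geodesic ending at $\xi$, a routine Fubini computation in Hopf coordinates reduces the conjecture to the following assertion: for $m$-a.e.\ $v\in T^1M$, one has $\bar\phi(g_tv)/t\to0$ as $t\to+\infty$.

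I would next perform the geometric reduction. Fix $\epsilon>0$. If $\bar\phi(g_{t_0}v)=L$, then by $1$-Lipschitzness along the flow line $\bar\phi(g_tv)\ge L-|t-t_0|\ge L/2$ whenever $|t-t_0|\le L/2$; since $L$ cannot exceed $t_0+O(1)$, for $t_0$ large this forces $\mathrm{Leb}\{t\in[0,t_0]:\bar\phi(g_tv)\ge L/2\}\ge L/2$. Taking $L=\bar\phi(g_{t_0}v)\ge\epsilon t_0$ and noting $L/2\ge D$ once $t_0\ge 2D/\epsilon$, we get, for every fixed $D$,
\[
\frac1{t_0}\,\mathrm{Leb}\{t\in[0,t_0]:\bar\phi(g_tv)\ge D\}\ \ge\ \frac{L}{2t_0}\ \ge\ \frac\epsilon2 .
\]
Contrapositively: if for some $D$ one has $\limsup_{T\to\infty}\tfrac1T\int_0^T\mathbf 1_{\{\bar\phi\ge D\}}(g_tv)\,dt<\epsilon/2$, then $\limsup_{t\to\infty}\bar\phi(g_tv)/t\le\epsilon$. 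Hence it suffices to prove that
\[
\inf_{D\in\mathbb N}\ \limsup_{T\to\infty}\ \frac1T\int_0^T\mathbf 1_{\{\bar\phi\ge D\}}(g_tv)\,dt\ =\ 0 \qquad\text{for }m\text{-a.e.\ }v .
\]

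When $\|m\|<\infty$ --- for instance $G$ geometrically finite, or indeed any divergence-type $G$ with finite Bowen--Margulis--Sullivan measure --- this is immediate. Indeed $\mathbf 1_{\{\bar\phi\ge D\}}\in L^1(m)$, and since $m$ is Radon and $M$ is proper, $m(\bar\phi<D)<\infty$ and increases to $\|m\|$, so $m(\bar\phi\ge D)\downarrow0$ as $D\to\infty$; the Birkhoff--Khinchin theorem for the finite-measure ergodic flow $(g_t,m)$ then identifies the inner $\limsup$ with $m(\bar\phi\ge D)/\|m\|$, whose infimum over $D$ is $0$. Together with Hopf--Tsuji--Sullivan this already establishes the conjecture in the finite-BMS case (it is essentially Sullivan's logarithm-law argument, with Liouville measure replaced by the BMS measure). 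The main theorem of the present paper does not help here, since divergence type is strictly weaker than statistical convex-cocompactness: for example $\mathbb Z$-covers of closed hyperbolic manifolds are of divergence type but are not SCC.

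The genuine difficulty is the case $\|m\|=\infty$, realized already by such $\mathbb Z$-covers $G=\ker(\pi_1(N)\to\mathbb Z)$ with $N$ closed hyperbolic: then $m$ is Liouville measure twisted over $\mathbb Z$, and $\mathbf 1_{\{\bar\phi\ge D\}}\notin L^1(m)$ for \emph{every} $D$ (as $m(\bar\phi<D)<\infty$ while $\|m\|=\infty$), so the averaging step above is vacuous. Two routes suggest themselves, and I expect the crux to be making one of them work uniformly across all divergence-type groups. First, a renewal/excursion approach: by ergodicity of $m$ there is a single $R_0<\infty$ with $m$-a.e.\ geodesic returning infinitely often to $K=\{\bar\phi<R_0\}$, a set of finite positive $m$-measure, and decomposing a flow line into excursions off $K$ reduces the problem to showing that the \emph{depth} of the excursion in progress at time $T$ is $o(T)$. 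The obstruction is that the first-return time to $K$ is not $m|_K$-integrable, so excursions have infinite mean, and for return-time laws of stable index below $1$ a single excursion can carry a positive fraction of the elapsed hyperbolic time; one therefore needs deep excursions to be ``slow'', i.e.\ to have depth sublinear in their duration, which is false pointwise and must be upgraded to a Patterson--Sullivan estimate (for $\mathbb Z$-covers this is the central-limit heuristic that the $\mathbb Z$-drift of a $\mu_o$-typical geodesic is of order $\sqrt t$). Second, a direct Borel--Cantelli argument over scales $n_j=\lfloor(1+\epsilon)^j\rfloor$: since $n_{j+1}-n_j=O(\epsilon\,n_j)$, the estimate ``$d(\gamma_\xi(n_j),Go)\le\epsilon\,n_j$ for all large $j$'' already forces $d(\gamma_\xi(t),Go)=O(\epsilon\,t)$ for all large $t$, so it suffices to bound $\mu_o\big(\{\xi:d(\gamma_\xi(n_j),Go)>\epsilon\,n_j\}\big)$ by a summable (in $j$) quantity. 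One would do so by covering the complementary event with the shadows, seen from $o$, of the balls $B(go,\epsilon\,n_j)$ over $g$ with $d(o,go)\approx n_j$, via Sullivan's shadow lemma, and invoking divergence of $\sum_g e^{-\e G\,d(o,go)}$ at its critical exponent to show the orbit is dense enough to make the bound small. The obstacle here --- which I expect to be the heart of the matter, and presumably why the statement is still open --- is to control the overlaps of these shadows and the growth, in the radius $\epsilon\,n_j$, of the shadow-lemma constant, so as to obtain a quantitative recurrence estimate valid for an \emph{arbitrary} divergence-type group, with no geometric finiteness, spectral gap, or finite-BMS input.
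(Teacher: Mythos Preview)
The statement you are attempting to prove is labeled in the paper as a \emph{Conjecture} (attributed to Sullivan), not a theorem; the paper offers no proof of it. Immediately after stating the conjecture, the paper remarks that Sullivan himself confirmed it under the additional hypothesis that the Bowen--Margulis measure on the geodesic flow is finite, and then observes that the paper's own main result (\ref{MainThm}) would extend from SCC actions to divergence-type actions \emph{if} a suitable variant of this conjecture were known. In other words, the paper treats the general divergence-type case as open and uses the conjecture only as motivation.

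Your proposal is entirely consistent with this. You correctly reduce the question, via Hopf coordinates and the $1$-Lipschitz property of $\bar\phi$ along flow lines, to a time-average statement on $T^1M$, and then dispatch the finite-BMS case by the Birkhoff ergodic theorem --- this is exactly the Sullivan argument the paper cites. You also correctly identify that the SCC hypothesis of the present paper is strictly stronger than divergence type (your $\mathbb Z$-cover example is apt), so the paper's techniques do not close the gap. Your discussion of the infinite-BMS case is an honest inventory of obstructions rather than a proof: both the renewal/excursion route and the Borel--Cantelli/shadow-covering route run into the issues you name (non-integrable return times, uncontrolled shadow overlaps and constants), and you do not resolve them. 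That is not a flaw in your write-up so much as an accurate reflection of the state of affairs: the paper does not resolve them either, and presents the statement as a conjecture precisely for this reason.
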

This was confirmed by Sullivan \cite[Corollary 19]{Sul} if the Bowen-Margulis-Sullivan (BMS) measure on the geodesic flow is finite. Note that if the action $G\act\U$ is of divergence type, then $\mu_o$ is supported on the conical limit points of $G$ by Hopf-Tsuji-Sullivan dichotomy.
The conjecture then predicts that, without finiteness of BMS measures,  $\mu_o$ is further supported on the {sublinear limit points}. We refer to \cite{FM20} for relevant discussion. 

Let us explain the relevance  of Conjecture \ref{Sullivan} to this work. 

First, we may  formulate a strengthening  of Conjecture \ref{Sullivan}, and even for groups with contracting elements. Namely, fixing a loxodromic element  $h$ (or  a contracting element  $h\in G$ in general), we replace the whole orbit $Go$  in the equation (\ref{eq:sublinearlimitset}) by the subset $\cup_{g\in G} g\ax(h)$ of orbital points arising from the axis of $h$ up to the $G$-action. We believe this variant of Conjecture \ref{Sullivan} still holds. This is confirmed with finite BM measure on geodesic flow for any nonpositively curved Riemannian manifold in \cite[Thm 5.1]{GQR22}, where it is proved via \textit{ergodic theorems} that regularly contracting rays  are generic. See Definition \ref{FreqContrDefn} and more detail in \cite{GQR22}.

We introduce a group-theoretical version of regularly contracting rays in Definition \ref{FreqBarrierDefn}, which contain \textit{frequently $(r, h)$--barriers}. It follows from the very definition that   the limit in (\ref{eq:sublinearlimitset}) is  zero along such geodesic rays. 
The main bulk of this paper proves that, \textit{if the action is SCC}, then the $\mu_o$-generic geodesic rays have {frequently} $(r, h)$--barriers. The SCC action is sufficient to have finite BMS measure. In this terms,  we  give an ergodic-free proof of \cite[Theorems 5.1, 7.1]{GQR22} in the much larger class of groups with contracting elements.

On the other hand, if  the above strengthening of Sullivan's conjecture holds for the set of regularly contracting rays, then \ref{MainThm} would be valid under a proper action of divergence type (more general than SCC actions). See \cite[Sec. 6]{FM20} for other implication of Sullivan's conjecture. 
\newline
\paragraph{\textbf{Organization of the paper}} 
We present relevant background in Section~\ref{SecPrelim}. Section~\ref{SecDensity} collects all tools from the recently developed conformal density theory on convergence boundary. Section~\ref{SecHTST} is devoted to the proof of \ref{MainThm}.

%\subsection*{Acknowledgments} 

\section{Preliminaries}\label{SecPrelim}
%This section introduces results based on \cite{YANG6}.

\subsection{Notations}
Let $(\U, d)$ be a proper geodesic metric space.  Let $A$ be a closed subset of $\U$ and $x$ be a point in $\U$.  By $d(x, A)$ we mean the set-distance 
between $x$ and $A$, i.e. 
\[
d(x, A) : = \inf \big \{ d(x, y): y \in A \big \}. 
\]
Let
\[ \pi_{A}(x) : = \big \{ y\in A: d(x, y) = d(x, A) \big \} \]
be the set of nearest-point projections from $x$ to $A$. Since $\U$ is a proper metric space, 
$\pi_{A}(x)$ is non-empty. We refer to $\pi_{A}(x) $ as the \emph{projection set} of $x$ to $A$. The \textit{projection set} of a subset $B$ to $A$ is similarly defined: $\pi_A(B)=\cup_{x\in B}\pi_A(b)$. Define 
$$
\proj_A(x,y)=\mathrm{diam}(\pi_A(x)\cup\pi_A(y)).
$$ 
If a set $A$ is countable then we use $\sharp A$ to denote the number of elements in $A$.

We write $N_r(A)$ to denote the closed $r$--neighborhood of $A$ for some $r\ge 0$.
We say that a subset $A$ is \textit{$r$--close} to $B$ if $A$ is contained in the $r$--neighborhood of $B$. %In other terms,  $A$ and $B$ have Hausdorff distance less than $r$.

  %Fix a (sufficiently small) number $\delta > 0$ that won't change in the rest of paper. 

A continuous map $\alpha:I\to X$ from an interval $I\subset (-\infty, +\infty)$ will be referred to as a path. Without explicit mention, the paths we consider are locally rectifiable paths, which means that the restriction  to any finite interval is rectifiable.  

Let $\alpha: [a,b]\to \U$ be a rectifiable  path  with arc-length parametrization from the initial point $\alpha^-=\alpha(a)$ to the terminal point $\alpha^+=\alpha(b)$. If $x=\alpha(s)$ and $y=\alpha(t)$ are two points on $\alpha$, $[x,y]_\alpha$ denotes the parametrized
subpath of $\alpha$ going from $x$ to $y$, that is, the restriction of $\alpha$: $[s,t]\to \U$. 
We also write $\alpha[s,t]$ for the restriction of $\alpha$ to $[s,t]$. 
Let $[x, y]$ denote a geodesic segment (not necessarily unique) between $x, y\in \U$. 

%In the special case where $\beta$ is a geodesic, we suppress the subscript, i.e. we use $[x, y]$ to denote geodesic segments between the two points. 
If $\beta$ is a geodesic ray emanating from a point $\go\in X$, then $ \beta({\rr})$ denotes the point on $ \beta $  that is distance $ \rr $ from $\go$.
 
A (locally rectifiable) path $\alpha$ is called a \textit{$(q, Q)$--quasi-geodesic} for constants $q\geq 1$, $Q>0$  if for   any rectifiable subpath $\beta$,
$$\len(\beta)\le q\cdot d(\beta^-, \beta^+)+Q$$
 where   $\ell(\beta)$ denotes the length of $\beta$. Note that if $\beta: [a,b]\subseteq \mathbb R\to \U$ is  parametrized   with arclength parametrization,  for any $s, t \in [a, b]$, we have 
\[
\frac{|s-t|}{q} - Q  \leq d \big(\beta (s), \beta(t)\big) 
   \leq q\, |s-t|+ Q.	
\]
This bi-Lipchitez inequality with bounded error is the usual way to define quasi-geodesics.  
In this paper, we adopt the continuous version of quasi-geodesics as above only for convenience and to make the exposition simpler. 
When $X$ is a geodesic metric space, one can always perturb a quasi-geodesic in an uniform neighborhood to make it continuous 
(see  \cite[Lemma III.1.11]{BriHae}).
 
Denote by $\alpha\cdot \beta$ (or simply $\alpha\beta$) the concatenation of two paths $\alpha, \beta$  provided that $\alpha^+ =
\beta^-$.
 
%Let $\alpha :\mathbb R\to\U$ be an arc-length parameterized bi-infinite path. The restriction of $\alpha$ to $[a, +\infty)$ for $a\in\mathbb R$  is referred to  as a \textit{positive ray}, and its complement a \textit{negative ray}. By abuse of language, we often denote them by   $\alpha^+$ and $\alpha^-$.

%Fix a base point $\go \in X$. A \emph{geodesic ray} in $X$ is an isometric embedding $\tau \from [0, \infty) \to X$ such that $\tau(0) = \go$. That is, by convention, a geodesic ray is always assumed to start from this fixed base-point. Moreover, if $0\le s\le t$, we write $\gamma[s,t]$ to denote the segment restricting on the interval $[s,t]$. 

%A \emph{quasi-geodesic ray} is a continuous quasi-isometric embedding $\beta \from [0, \infty) \to X$ such that $\beta(0) =\go$.  

%Similar to above, a \emph{geodesic segment} is an isometric embedding $\tau \from [s,t] \to X$ and a \emph{quasi-geodesic segment} is a continuous  quasi-isometric embedding $\beta \from [s,t] \to X$. 

Let $f, g$ be real-valued functions. Then $f \prec_{c_i} g$ means that
there is a constant $C >0$ depending on parameters $c_i$ such that
$f < Cg$. The symbol $\succ_{c_i}  $ is defined similarly, and  $\asymp_{c_i}$ means both $\prec_{c_i}  $ and $\succ_{c_i}  $  are true. The constant $c_i$ will be omitted if it is a universal constant.

\subsection{Contracting subsets}
\begin{defn}[Contracting subset]
For a given $C\ge 0$, a subset $U$ in $\U$ is called $C$--\textit{contracting}   if for any geodesic $\gamma$ with $d(\gamma,
U) \ge C$, we have
$$\mathrm{diam}(\pi_{U} (\gamma)) \le C.$$
 A
collection of $C$--contracting subsets is referred to
as a $C$--\textit{contracting system}. We shall omit the contracting constant $C$ if its value does not matter.
\end{defn}

%Contracting property has several equivalent characterizations. When speaking about $C$--contracting property, the constant $C$ satisfies  the following three statements.
Let $\sigma: \mathbb R_{\ge 0}\times \mathbb R_{\ge 0} \to \mathbb R_{\ge 0}$ be a non-negative function.
A subset $U\subseteq\U$ is called $\sigma$-\textit{quasi-convex} if  given $q\ge 1, Q\ge 0$,  any $(q, Q)$--quasi-geodesic with two endpoints in $U$ lies in $N_{R}(U)$ with $R=\sigma(q,Q)$.

\begin{lem}\label{BigThree}
Let $U$ be a $C$-contracting subset for some $C\ge 0$. Then 
\begin{enumerate}
\item
There exists $\sigma$ depending only on $C$ such that   $U$ is $\sigma$--quasi-convex.
\item
For any $r>0$, there exists $\hat C=\hat C(C, r)$ such that a subset $V\subseteq \U$ that is $r$-close to $U$ is $\hat C$--contracting. 
\item
For any $\lambda\ge 1, Q\ge 0$, there exists $\hat C=\hat C(\lambda, Q, C)$ so that any subpath of a  $(\lambda, Q)$--quasi-geodesic $U$ is $\hat C$--contracting. 
\item 
There exists $\hat C=\hat C(C)$ such that   $\proj_U(y, z)\le d(y, z)+\hat C$ for any $y,z\in \U$.
\end{enumerate}
\end{lem}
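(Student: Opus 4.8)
All four items belong to the standard toolbox for contracting sets, and the plan is to isolate one reusable fact and derive the rest from it together with item (1). The fact I would establish first is that on a $C$--contracting set $U$ the nearest-point projection is coarsely well defined and coarsely $1$--Lipschitz: $\mathrm{diam}(\pi_U(x))\le C'$ for every $x\in\U$, and $d(\pi_U(x),\pi_U(y))\le d(x,y)+C'$ for all $x,y$, with $C'=C'(C)$. The first half is immediate from the ball formulation of contraction (equivalent to the definition above after changing $C$), since $B(x,d(x,U))$ is disjoint from $U$; the same argument shows more generally that a point $u\in U$ with $d(x,u)\le d(x,U)+K$ lies within $O(K,C)$ of $\pi_U(x)$. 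For the second half I would split $[x,y]$ at its first and last points $m_1,m_2$ lying within $C$ of $U$ (if there are none, $[x,y]$ is $C$--far from $U$ and $\pi_U(x)\cup\pi_U(y)\subseteq\pi_U([x,y])$ has diameter $\le C$), apply the geodesic contraction estimate to $[x,m_1]$ and to $[m_2,y]$ to get $d(\pi_U(x),m_1),d(\pi_U(y),m_2)\le 2C$, and conclude. Granting this, item (4) follows at once, since $d_U(y,z)$ is the projection diameter $\mathrm{diam}(\pi_U(y)\cup\pi_U(z))\le 2C'+d(\pi_U(y),\pi_U(z))\le d(y,z)+3C'$.

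For item (1), I would first handle geodesics. Given $\gamma=[x,y]$ with $x,y\in U$ and a point $p\in\gamma$ with $d(p,U)\ge C$, take the maximal subsegment $[a,b]\subseteq\gamma$ through $p$ on which $d(\cdot,U)\ge C$; since $x,y\in U$ the endpoints satisfy $d(a,U)=d(b,U)=C$ and $[a,b]$ is $C$--far from $U$, so $\mathrm{diam}(\pi_U([a,b]))\le C$, whence $\ell([a,b])=d(a,b)\le 3C$ and $d(p,U)\le 4C$. Thus geodesics between points of $U$ lie in $N_{4C}(U)$. To reach the quasi-geodesic case I would invoke the Morse property of contracting sets: a $(q,Q)$--quasi-geodesic $\beta$ with endpoints in $U$ stays within $\sigma(q,Q,C)$ of a geodesic $[\beta^-,\beta^+]$, which lies in $N_{4C}(U)$, so $\beta\subseteq N_{\sigma(q,Q,C)+4C}(U)$. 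The Morse estimate itself I would prove by the usual contraction argument — cut an excursion of $\beta$ away from $U$ into pieces of bounded diameter, use contraction to bound the total displacement of the projection to $U$ along the excursion, and use the quasi-geodesic inequality $\ell\le q\,d+Q$ to turn that bound into a bound on the length of the excursion. I expect this passage from geodesics to quasi-geodesics to be the only step requiring real work; the rest is soft.

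Item (2) I would treat as a routine perturbation estimate: choose $\hat C$ large compared with $C$ and $r$; if a geodesic $\gamma$ satisfies $d(\gamma,V)\ge\hat C$ then $d(\gamma,U)\ge\hat C-r\ge C$, so $\mathrm{diam}(\pi_U(\gamma))\le C$, while each $v\in\pi_V(w)$ is within $r$ of a point $u\in U$ with $d(w,u)\le d(w,U)+2r$, hence within $O(r,C)$ of $\pi_U(w)$ by the coarse well-definedness above; so $\mathrm{diam}(\pi_V(\gamma))\le C+O(r,C)\le\hat C$ after enlarging $\hat C$. Item (3) is similar but calls for a short case split: for a subpath $U'$ of the $(\lambda,Q)$--quasi-geodesic $U$ and a geodesic $\eta$ with $d(\eta,U')\ge\hat C$, either $\eta$ stays $C$--far from $U$, in which case $\mathrm{diam}(\pi_U(\eta))\le C$ and, using that $\U$--distance along $U$ is comparable to arc-length distance, truncating $\pi_U(w)$ into $U'$ displaces it boundedly; or $\eta$ comes within $C$ of $U$ and hence fellow-travels a subarc of $U$ disjoint from $U'$, so quasi-convexity of $U$ (item (1)) pins $\pi_{U'}(\eta)$ near an endpoint of $U'$. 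Either way $\mathrm{diam}(\pi_{U'}(\eta))=O(\lambda,Q,C)$, and enlarging $\hat C$ finishes. This last case analysis is elementary but the most bookkeeping-heavy part of the lemma.
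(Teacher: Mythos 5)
Your proposal is correct, and it is the standard toolbox argument; note that the paper itself gives essentially no proof here (it cites Prop.\ 2.2.3 of the reference [YANG11] for item (3) and declares (1), (2), (4) ``straightforward applications of the contracting property''), so your write-up is supplying exactly the details the paper elides, and your reusable fact --- coarse well-definedness plus coarse $1$--Lipschitzness of $\pi_U$ --- is the right organizing device. The one point to make explicit when you flesh out case 2 of item (3): a geodesic $\eta$ with $d(\eta,U')\ge \hat C$ cannot come $C$--close to $U$ both before and after the subpath $U'$ in the parametrization, because by item (1) the portion of $\eta$ between two such approaches would be trapped in a uniform neighborhood of $U$, and coarse connectivity of its projection (via the coarse Lipschitz property and the quasi-geodesic parametrization of $U$) would force it within a bounded distance of $U'$, a contradiction once $\hat C$ and $\ell(U')$ are large (short $U'$ being trivial); with that observed, $\pi_{U'}(\eta)$ really is pinned near a single endpoint of $U'$ and your case analysis closes.
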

\begin{proof}
The assertion (3) is proved in \cite[Prop 2.2.3]{YANG11}. The others are straightforward applications of contracting property.   
\end{proof}

In this paper, we are interested in a  contracting system $\f$ with  \textit{$\tau$--bounded intersection} property for a function $\tau: \mathbb R_{\ge 0}\to \mathbb R_{\ge 0}$ if the
following holds
$$\forall U\ne  V\in \f: \quad \mathrm{diam}({N_r (U) \cap N_r (V)}) \le \tau(r)$$
for any $r \geq 0$. This  is, in fact, equivalent to a \textit{bounded projection
property} of $\f$:  there exists a constant $B>0$ such that the
following holds
$$\mathrm{diam}(\pi_{U}(V)) \le B$$
for any $U\ne V \in \f$. See \cite[Lemma 2.3]{YANG6}.

An infinite order element $h \in G$ is called  
\textit{contracting} if the subgroup $\langle h \rangle$ acts by translation on a  contracting quasi-geodesic. By Lemma \ref{BigThree}(2), this is equivalent to saying that the orbital map $n\in \mathbb Z\mapsto h^no\in X$ is a quasi-isometric embedding with contracting image.  The set of contracting elements is preserved under conjugacy. %We are able to precise the group     $E(h):=E(\langle h\rangle)$.  %It   is a \textit{maximal} elementary group: any elementary group $h\in E$ is contained in $E(h)$.  

\begin{lem}\cite[Lemma 2.11]{YANG10}\label{elementarygroup}
Let $h\in G$ be a contracting element. Then it is contained in the following unique maximal elementary subgroup
$$
E(h)=\{g\in G: \exists n\in \mathbb N_{> 0}, (\;gh^ng^{-1}=h^n)\; \lor\;  (gh^ng^{-1}=h^{-n})\}.
$$
\end{lem}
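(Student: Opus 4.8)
The plan is to identify the explicitly defined set $E(h)$ with \emph{the} unique maximal elementary (= virtually cyclic) subgroup containing $h$. Throughout, fix a $C$--contracting quasi-geodesic $\gamma$ on which $\langle h\rangle$ acts cocompactly by translation, with stable translation length $\|h\|>0$; thus $\gamma$ lies within finite Hausdorff distance of an orbit $\langle h\rangle\go'$ ($\go'\in\gamma$) and is a quasi-geodesic line. First I would record the elementary algebraic fact that $E(h)$ is a subgroup containing $h$: if $g_i h^{n_i}g_i^{-1}=h^{\epsilon_i n_i}$ with $\epsilon_i\in\{\pm 1\}$, then after replacing the $n_i$ by a common multiple $n$ one gets $g_1g_2\,h^{n}\,(g_1g_2)^{-1}=h^{\epsilon_1\epsilon_2 n}$ and $g_1^{-1}h^{n}g_1=h^{\epsilon_1 n}$, while $h\in E(h)$ trivially.

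Next comes the geometric heart. Given $g\in E(h)$, pick $n>0$ with $gh^{n}g^{-1}=h^{\pm n}$. The element $h^{\pm n}$ translates cocompactly along $\gamma$, and being equal to $gh^ng^{-1}$ it translates cocompactly along $g\gamma$ (the image of $\gamma=\ax(h)$ under the isometry $g$, again $C$--contracting). Comparing the two cocompact $\langle h^{n}\rangle$--orbits (using $d_{\mathrm{Haus}}(\langle h^n\rangle p,\langle h^n\rangle q)\le d(p,q)<\infty$ for $p\in\gamma$, $q\in g\gamma$) shows $d_{\mathrm{Haus}}(\gamma,g\gamma)<\infty$, hence $\mathrm{diam}\big(\pi_\gamma(g\gamma)\big)=\infty$ and $\mathrm{diam}\big(\pi_{g\gamma}(\gamma)\big)=\infty$. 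The key claim I must then prove is that there is $D=D(C)$, \emph{independent of $g$}, with $d_{\mathrm{Haus}}(\gamma,g\gamma)\le D$. This is purely about contracting sets: if $U,V$ are $C$--contracting and each has infinite projection onto the other, they are $D(C)$--Hausdorff close. For $x\in U$ one chooses $a,b\in V$ whose projections to $U$ lie far on either side of $x$ along the line $U$ (possible since $\pi_U(V)$ is coarsely all of $U$), splits the geodesic $[a,b]$ into maximal sub-arcs inside and outside $N_C(U)$, notes each ``outside'' sub-arc projects to a set of diameter $\le C$ by the contraction inequality, so the ``inside'' sub-arcs (each within $C$ of $U$) must coarsely cover $[\pi_U(a),\pi_U(b)]_U$ with gaps $\le C$; in particular one of them projects within $O(C)$ of $x$, and since $[a,b]\subseteq N_{\sigma(C)}(V)$ by quasi-convexity (Lemma~\ref{BigThree}(1)) this gives $d(x,V)\le D(C)$. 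I expect making this covering/fellow-travelling argument precise and uniform to be the main obstacle.

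With this in hand, set $F=\{g\in G : d_{\mathrm{Haus}}(g\gamma,\gamma)\le D\}$. By the triangle inequality for Hausdorff distance $F$ is a subgroup, and it contains $h$ and (by the previous step) $E(h)$. Let $Y=\overline{\bigcup_{g\in F}g\gamma}$; then $FY=Y$ and $\gamma\subseteq Y\subseteq N_D(\gamma)$, so $d_{\mathrm{Haus}}(Y,\gamma)\le D$ and $\langle h\rangle$ still acts cocompactly on $Y$. Since $G\act\U$ is proper, the restricted action $F\act Y$ is proper; a group acting properly on a space on which a subgroup acts cocompactly contains that subgroup with finite index, so $[F:\langle h\rangle]<\infty$. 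As $\langle h\rangle\le E(h)\le F$, also $[E(h):\langle h\rangle]<\infty$, so $E(h)$ is infinite virtually cyclic, i.e.\ elementary.

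Finally I would settle maximality and uniqueness by a short group-theoretic argument. Let $E'\le G$ be any elementary subgroup with $h\in E'$. Being virtually cyclic and containing the infinite cyclic group $\langle h\rangle$, it satisfies $[E':\langle h\rangle]<\infty$; for $g\in E'$ the conjugate $g\langle h\rangle g^{-1}\le E'$ also has finite index, so $\langle h\rangle\cap g\langle h\rangle g^{-1}$ is finite index in $E'$, in particular infinite, whence $h^{n}=gh^{m}g^{-1}$ for some nonzero integers $n,m$. Comparing stable translation lengths, $|n|\,\|h\|=\|h^{n}\|=\|gh^{m}g^{-1}\|=|m|\,\|h\|$ with $\|h\|>0$, so $m=\pm n$ and $gh^{\pm n}g^{-1}=h^{n}$, i.e.\ $g\in E(h)$. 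Hence $E'\subseteq E(h)$. Together with the previous step this shows $E(h)$ is itself elementary, contains $h$, and contains every elementary subgroup that contains $h$; therefore it is the unique maximal elementary subgroup containing $h$.
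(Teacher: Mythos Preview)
The paper does not supply a proof of this lemma; it is simply quoted from \cite[Lemma~2.11]{YANG10}. Your argument is the standard one and is essentially correct: identify $E(h)$ with the coarse stabiliser of the contracting axis, show that stabiliser is virtually $\langle h\rangle$ by properness plus cocompactness, and deduce maximality via translation lengths.

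Two small points to tighten. First, when you write ``By the triangle inequality for Hausdorff distance $F$ is a subgroup'': the triangle inequality only gives $d_{\mathrm{Haus}}(g_1g_2\gamma,\gamma)\le 2D$, not $\le D$. What actually closes $F$ under products is that $g_1g_2\gamma$ is again a $C$--contracting quasi-line (same $C$, being an isometric translate), so your key claim re-applies and upgrades the finite bound back to $D=D(C)$. Equivalently, and more cleanly, define $F=\{g:d_{\mathrm{Haus}}(g\gamma,\gamma)<\infty\}$, which is manifestly a subgroup, and then observe that your key claim shows this coincides with $\{g:d_{\mathrm{Haus}}(g\gamma,\gamma)\le D\}$. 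Second, in the proof of the key claim you assert that $\pi_U(V)$ is ``coarsely all of $U$''; infinite diameter alone does not give this (it could be a coarse half-line), but in your situation you have already established finite Hausdorff distance from the cocompact $\langle h^n\rangle$--action, and that does make $\pi_U(V)$ coarsely dense in $U$ (if $u\in U$ and $v\in V$ with $d(u,v)\le R$, then $d(u,\pi_U(v))\le 2R$). With these adjustments the argument goes through.
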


Keeping in mind the basepoint $o\in\U$, the \textit{axis} of $h$  is defined as the following quasi-geodesic 
\begin{equation}\label{axisdefn}
\ax(h)=\{f o: f\in E(h)\}.
\end{equation} Notice that $\ax(h)=\ax(k)$ and $E(h)=E(k)$    for any contracting element   $k\in E(h)$.

We say that two contracting elements $h,k\in G$ are \textit{independent} if $E(h)\ne E(k)$ and the family of axes $\{g\ax(h), g\ax(k): g\in G\}$ has bounded projection property. Note that this is stronger than the one in literature that only $\ax(h), \ax(k)$ are required to have bounded projection. 

Recall that a group is called \textit{elementary} if it is virtually cyclic.
\begin{lem}\cite[Lemma 4.6]{YANG6}\label{lem:manycontractingelements}
Assume that $G$ acts properly on a proper geodesic metric space $X$ with a contracting element. If $G$ is non-elementary, then $G$ contains infinitely many pairwise independent contracting elements.     
\end{lem}

\subsection{Horofunction boundary}\label{subsec: horofunction boundary}
We  recall the construction of horofunction boundary, which are endowed with the so-called finite and sublinear  difference partitions.

Fix a basepoint $o\in \U$. For  each $y \in  \U$, we define a Lipschitz map $b_y:  \U\to \mathbb R$     by $$\forall x\in \U:\quad b_y(x)=d(x, y)-d(o,y).$$ This   family of $1$--Lipschitz functions sits in the set of continuous functions on $ \U$ vanishing at $o$.  Endowed  with the compact-open topology, the  Arzela-Ascoli Lemma implies that the closure  of $\{b_y: y\in  \U\}$  gives a compactification of $ \U$.  The complement, denoted by $\hU$, of $ \U$ is called  the \textit{horofunction boundary}. Explicitly,  $\hU$ consists of all pointwise limits of the family of functions $\{b_y: y\in  \U\}$: $b_\xi\in \hU$ if for a unbounded sequence of points $y_n\in X$, $b_\xi(x)=\lim_{n\to \infty} b_{y_n}(x)$ holds with every $x\in X$. We write $y_n\to \xi$ and $b_{y_n}\to b_\xi(x)$ accordingly. According to the context, both $\xi$ and $b_\xi$ are used to denote  the boundary points. 

A \textit{Busemann cocycle} $B_\xi:  \U\times \U \to \mathbb R$ (independent of $o$) is given by $$\forall x_1, x_2\in  \U: \quad B_\xi(x_1, x_2)=b_\xi(x_1)-b_\xi(x_2).$$
  
The topological type of horofunction boundary is independent of  the choice of   basepoints. Every isometry $\phi$ of $\U$ induces a homeomorphism on $\bU$:  
$$
\forall y\in \U:\quad b_{\phi\xi}(y):=b_\xi(\phi^{-1}(y))-b_\xi(\phi^{-1}(o)).
$$

\paragraph{\textbf{Finite difference relation}.}
The   \textit{locus} (or $[\cdot]$-class) of   a horofunction  $b_\xi$ with $\xi\in \hU$ consists of all horofunctions $b_\eta$ with $\eta\in \hU$ so that $\|b_\xi-b_\eta\|_\infty<\infty.$  The loci   $[b_\xi]$  of    horofunctions $b_\xi$ form a \textit{finite difference equivalence relation} $[\cdot]$ on $\hU$. The \textit{ $[\cdot]$-saturation} $[\Lambda]$ of a subset $\Lambda\subseteq \hU$ is the union of  $[\cdot]$-classes of all points in $\Lambda$. We say that $\Lambda$ is \textit{saturated} if $[\Lambda]=\Lambda$. 

The horofunction boundary equipped with finite difference relation will be an important example of convergence boundaries, introduced in Subsection \ref{sec convergence bdry}.\newline  

%Say that a sequence of subsets $X_n$ is \textit{escaping} if $d(o,X_n)\to\infty$.We summarize the following properties of horofunction boundary in \cite[Sec. 5]{YANG22}, which are called the convergence property of the horofunction boundary.

\paragraph{\textbf{Sublinear difference relation}.} It will be also useful to consider another larger relation than the finite difference relation. We say two horofunctions $b_\xi,b_\eta: \U\to [0,\infty)$ have  \textit{sublinear difference} if 
\begin{equation}\label{SublinearDiff}
\lim_{n\to\infty}\sup_{d(o,x)\ge n} \frac{|b_\xi(x)-b_\eta(x)|}{d(o,x)}=0.   
\end{equation}
Since two horofunction representatives of a given point $\xi\in \pU$ differ by a constant for different basepoints, this relation is independent on the choice of basepoint. The sublinear difference relation is an equivalence relation. We denote by $[\xi]_s$ the  equivalent class of $\xi\in \pU$, and $[\hU]_s$ the resulting quotient space of $\hU$. It is clear that $[\xi]\subseteq [\xi]_s$, so $[\hU]$ is also quotient of $[\hU]_s$.  %In particular, Theorem \ref{HorobdryConvergence} mentioned below holds for sublinear equivalence relation as well, where $[\cdot]$ could be replaced with $[\cdot]_s$.

Any geodesic ray $\alpha$ tends to a unique point, denoted by $\alpha^+$, at the horofunction boundary $\hU$. Namely, the associated horofunction is as follows
\begin{equation}\label{defn:busemanfunc}
\forall x\in X: \quad \alpha^+(x):=\lim_{t\to\infty} [d(\alpha(t),x)-t]    
\end{equation}
Let us emphasize that the notation $\alpha^+$  means either a boundary point or the horofunction associated to the point. 

\begin{lem}\label{lineardiverging}
Let $\alpha,\beta:[0,\infty)\to \U$ be two geodesic ways emanating from the same basepoint $o\in \U$, ending at  $\alpha^+,\beta^+\in \hU$ respectively. 
If there are two sequences of real numbers $s_n, r_n\to\infty$ satisfying  
\begin{align}
\label{rnsn} \limsup_{n\to\infty} r_n/s_n<1\\
\label{distrn}\forall n\gg0, \; \limsup_{t\to\infty} d(o, [\alpha(s_n), \beta(t)])\le r_n
\end{align}
then $[\alpha^+]_s\ne [\beta^+]_s$.   
\end{lem}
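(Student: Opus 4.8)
}
The plan is to show the two horofunctions $\alpha^+$ and $\beta^+$ stay \emph{linearly} far apart along the ray $\alpha$, which is more than enough to violate the sublinear-difference condition \eqref{SublinearDiff}. Concretely, I would evaluate the difference $b_{\alpha^+}(x) - b_{\beta^+}(x)$ at the points $x = \alpha(s_n)$ and estimate each term. Since $\alpha$ is a geodesic through $o$, we have $b_{\alpha^+}(\alpha(s_n)) = -s_n$ exactly. For the other term, I would use the hypothesis \eqref{distrn}: if $\limsup_{t\to\infty} d(o, [\alpha(s_n),\beta(t)]) \le r_n$, then the geodesic from $\alpha(s_n)$ to $\beta(t)$ passes within (roughly) $r_n$ of $o$, so by the triangle inequality $d(\alpha(s_n), \beta(t)) \ge d(o,\alpha(s_n)) + d(o,\beta(t)) - 2r_n - o(1) = s_n + t - 2r_n - o(1)$. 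Letting $t\to\infty$ and subtracting $t$ gives $b_{\beta^+}(\alpha(s_n)) \ge s_n - 2r_n - \epsilon$ for any $\epsilon>0$ and all large $n$.

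Combining the two estimates, $|b_{\alpha^+}(\alpha(s_n)) - b_{\beta^+}(\alpha(s_n))| \ge |-s_n - (s_n - 2r_n)| - \epsilon = 2s_n - 2r_n - \epsilon$ (using that $b_{\beta^+}$ is $1$-Lipschitz and vanishes at $o$, so $b_{\beta^+}(\alpha(s_n)) \le s_n$, which keeps this quantity genuinely of size $\asymp s_n$ rather than having cancellation). Dividing by $d(o,\alpha(s_n)) = s_n$, we get
$$
\frac{|b_{\alpha^+}(\alpha(s_n)) - b_{\beta^+}(\alpha(s_n))|}{d(o,\alpha(s_n))} \ge 2 - \frac{2r_n}{s_n} - \frac{\epsilon}{s_n}.
$$
By \eqref{rnsn}, $\limsup_n 2r_n/s_n < 2$, so the right-hand side is bounded below by a positive constant along the subsequence $s_n\to\infty$. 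Hence the $\limsup$ in \eqref{SublinearDiff} is strictly positive, so $\alpha^+$ and $\beta^+$ do not have sublinear difference, i.e. $[\alpha^+]_s \ne [\beta^+]_s$.

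The main technical point — the only place where care is needed — is the passage from the condition on $\limsup_{t\to\infty} d(o,[\alpha(s_n),\beta(t)])$ to a lower bound on $d(\alpha(s_n),\beta(t))$ that survives the limit defining the horofunction. I would handle this by fixing $n$, choosing for each large $t$ a point $p_t \in [\alpha(s_n),\beta(t)]$ with $d(o,p_t)$ close to the $\liminf$, writing $d(\alpha(s_n),\beta(t)) = d(\alpha(s_n),p_t) + d(p_t,\beta(t)) \ge (d(o,\alpha(s_n)) - d(o,p_t)) + (d(o,\beta(t)) - d(o,p_t))$, and then taking $\limsup_{t\to\infty}$ carefully so that the $o(1)$ error (coming from the fact that \eqref{distrn} is itself a $\limsup$, realized only approximately for each $t$) does not interfere; choosing $\epsilon>0$ and passing to sufficiently large $t$ absorbs this. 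Everything else is the triangle inequality and the definitions of $b_{\alpha^+}, b_{\beta^+}$.
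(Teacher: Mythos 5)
Your proposal is correct and follows essentially the same route as the paper's proof: evaluate both horofunctions at $x_n=\alpha(s_n)$, note $b_{\alpha^+}(x_n)=-s_n$, and use the hypothesis that $[\alpha(s_n),\beta(t)]$ passes within $r_n+\epsilon$ of $o$ to get $d(\alpha(s_n),\beta(t))\ge s_n+t-2(r_n+\epsilon)$, hence $b_{\beta^+}(x_n)+s_n\ge 2(s_n-r_n)-2\epsilon$, which is linear in $s_n$ by \eqref{rnsn} and so violates \eqref{SublinearDiff}. Your write-up is, if anything, slightly more explicit about the final lower bound than the paper's.
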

\begin{proof}
Let $x_n=\alpha(s_n)\in \alpha$ so we have $\alpha^+(x_n)=-s_n$ as defined in (\ref{defn:busemanfunc}). We are going to prove that the sequence $x_n$ violates the inequality (\ref{SublinearDiff}). 
For small enough $\epsilon>0$, according to (\ref{distrn}), there are $t_m\to \infty$ so that $$d(o, [\alpha(s_n), \beta(t_m)])\le r_n+\epsilon$$ so by triangle inequality, $$|d(\alpha(s_n),\beta(t_m))- s_n-t_m|\le 2(r_n+\epsilon)$$ Thus, letting $s_n\to\infty$ and $\epsilon\to 0$, 
$$\begin{aligned}
\limsup_{s_n\to\infty} \frac{|\beta^+(x_n)-\alpha^+(x_n)|}{s_n}&=\limsup_{s_n\to\infty} \frac{|\beta^+(x_n)+s_n|}{s_n}\\
&=\frac{|\lim_{t_m\to\infty} [d(\beta(t_m),x_n)-t_m-s_n] +2s_n|}{s_n} > 0    
\end{aligned}$$
where the last inequality uses (\ref{rnsn}). Hence, $[\alpha^+]_s\ne [\beta^+]_s$ follows by definition.   
\end{proof}
\iffalse
\begin{lem}\label{lineardiverging2}
Let $\alpha,\beta:[0,\infty)\to \U$ be two geodesic rays issuing from the same basepoint $o\in \U$, representing distinct points in $\pka \U$. Then $[\alpha^+]_s\ne [\beta^+]_s$.   
\end{lem}
\begin{proof} \ywy{Proof contains gaps}
Set $x_t=\alpha(t)$ and $y_t=\beta(t)$. By definition, $\limsup_{t\to\infty}d(x_t, y_t)/t>0$, so there exist $\epsilon>0$ and $t_n\to \infty$ so that $d(x_t, y_t)\ge t \epsilon$ for any $t>s$. By triangle inequality, we have $|d(y_s,x_t)-s+t|\le 2d(x_t, y_t)$, \ywy{and $|d(y_s,x_t)-s+t|$ may not be lower bounded by  $2d(x_t, y_t)$ and then by $2\epsilon t$}. Then $\beta^+(x_t)-\alpha^+(x_t)=\lim_{s\to\infty}[d(y_s,x_t)-s+t]$ may not be bounded by $2\epsilon t$.   
\end{proof}

\begin{proof}
Set $x_t=\alpha(t)$ and $y_t=\beta(t)$. By definition, $\limsup_{t\to\infty}d(x_t, y_t)/t>0$, so there exist $\epsilon>0$ and $t_n\to \infty$ so that $d(x_t, y_t)\ge t \epsilon$ for any $t>s$. By triangle inequality, we have \[ |d(y_s,x_t)+(t-s)| \geq d(x_t, y_t) \geq \epsilon t \]
\ywy{Unfortunately, this inequality does not from triangle inequality: $\geq$ should be $\leq$, right? In general, the right hand side is the Gromov product, and without hyperbolicity, we could not obtain  $\geq$. }
Thus 
\[\beta^+(x_t)-\alpha^+(x_t)=\lim_{s\to\infty}[d(y_s,x_t)+(t-s)]
\geq\epsilon t.\]   
\end{proof}
\fi

\begin{lem}\label{SublinearClassLem}
For any $C,r>0$, there exists $L_0=L_0(C,r)$ with the following property. Let $\alpha,\beta$ be two geodesic rays from the same basepoint ending at $[\alpha^+]\ne [\beta^+]$ respectively. If $\alpha$ contains infinitely many disjoint $C$--contracting segments of length $L$ in its $r$--neighborhood for some $L\ge L_0$, then  $[\alpha^+]_s\ne [\beta^+]_s$.    
\end{lem}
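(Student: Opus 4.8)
The goal is to reduce the statement to Lemma~\ref{lineardiverging}, so the plan is to produce, from the hypothesis that $\alpha$ carries infinitely many disjoint $C$--contracting segments $I_n$ of length $L$ lying in $N_r(\alpha)$, two sequences $s_n,r_n\to\infty$ verifying (\ref{rnsn}) and (\ref{distrn}). First I would fix the contracting segments $I_n\subseteq N_r(\alpha)$; by Lemma~\ref{BigThree}(2) each $I_n$ is $\hat C$--contracting for a constant $\hat C=\hat C(C,r)$ independent of $n$, and by Lemma~\ref{BigThree}(1) it is $\sigma(\hat C)$--quasiconvex. Let $p_n$ be the point of $\alpha$ at the \emph{far} endpoint of (the $\alpha$-shadow of) $I_n$, so $s_n := d(o,p_n)\to\infty$, and note $\mathrm{diam}(I_n)$ is comparable to $L$.

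\textbf{Key steps.} The heart of the argument is the following dichotomy applied to a geodesic $\eta_t=[p_n,\beta(t)]$: either $\eta_t$ stays $\hat C$--close to $I_n$, in which case by quasiconvexity $\eta_t$ (hence a definite initial piece of $\alpha$ near $p_n$) fellow-travels $I_n$ and one shows $\alpha$ and $\beta$ eventually fellow-travel a common contracting ray forcing $[\alpha^+]=[\beta^+]$ — contradicting the hypothesis; or $\eta_t$ leaves $N_{\hat C}(I_n)$, in which case the contracting property of $I_n$ bounds $\mathrm{diam}(\pi_{I_n}(\eta_t))$ by $\hat C$. In the second (generic) case, $\pi_{I_n}(o)$ lies near $I_n^-$ (the near endpoint) while $\pi_{I_n}(p_n)$ is $p_n$ itself at $I_n^+$; since $I_n$ is long, the segment $[o,\beta(t)]$ cannot pass near $p_n=\alpha(s_n)$, because passing near $p_n$ would project a large portion of $I_n$ and exceed $\hat C$. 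Quantitatively, I would show $d(o,[\alpha(s_n),\beta(t)])\le s_n - L + O(\hat C)$ for all large $t$: indeed if $z\in[\alpha(s_n),\beta(t)]$ is the closest point to $o$, then walking from $\alpha(s_n)$ toward $\beta(t)$ the geodesic must ``peel off'' $I_n$ within distance $\hat C$ of entering, so $d(o,z)\le d(o,\alpha(s_n))-\mathrm{diam}(I_n)+O(\hat C)$. Set $r_n := s_n - L + D\hat C$ for a suitable universal $D$; then (\ref{distrn}) holds, and $\limsup r_n/s_n = \limsup (1 - (L-D\hat C)/s_n)$. Since $s_n\to\infty$ this limsup equals $1$, which is \emph{not} $<1$ — so I must instead choose $s_n$ to grow no faster than linearly in $n$ while $L$ is fixed, which fails.

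\textbf{Correcting the bound.} The fix is that the deficit is not a single $L$ but accumulates: between $p_n$ and $o$ the geodesic $\alpha$ passes through $I_n, I_{n-1},\dots,I_1$, and a geodesic from $\alpha(s_n)$ to $\beta(t)$ that diverges from $\alpha$ must miss a definite fraction of \emph{each} of these contracting segments it would otherwise have to traverse — alternatively, and more robustly, one uses that $[\alpha(s_n),\beta(t)]$ backtracks past all of $I_n$, giving $d(o,z)\le s_n - \Omega(n)$ if the $I_k$ are, say, roughly evenly spaced, or one simply takes $s_n$ to be the distance to the far end of $I_n$ and observes $L$ can be taken as large as one likes (``$L\gg 0$''), then reselects a sparse subsequence $I_{n_k}$ with $d(o,I_{n_k})$ growing fast enough that $r_{n_k}/s_{n_k}=1 - L/s_{n_k}$ — still $\to 1$. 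So the genuinely correct route is: take $s_n=d(o,I_n^+)$ and $r_n=d(o,I_n^-)+O(\hat C)$; then $r_n/s_n \le (s_n - L)/s_n \to 1$ again unless we make the segments \emph{long relative to their distance from $o$}. The clean resolution, which I expect is the intended one, is to apply the hypothesis to a \emph{well-chosen} sublinear function / spacing coming from the sublinearly Morse setup: the $I_n$ being disjoint along a geodesic ray with $d(o,I_n)\asymp$ (position along $\alpha$) means $L$ fixed gives $r_n/s_n\to1$, so one must instead use that the hypothesis will in practice be applied with $L$ comparable to the position — i.e. the statement as literally written with fixed $L$ should be read with the quantifier ``$L\gg0$'' allowing $L$ to depend on how far out we go, and the two sequences are built so that $r_n = s_n - L_n$ with $L_n/s_n$ bounded below.

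\textbf{Main obstacle.} The real difficulty, and where I would spend the most care, is exactly this bookkeeping: extracting from ``infinitely many disjoint long contracting segments in a neighborhood of $\alpha$'' a pair of sequences with $\limsup r_n/s_n<1$ \emph{strictly}. This requires that the contracting segments be long in proportion to their distance from the basepoint (not merely long and numerous), so the proof must either invoke that proportionality from the ambient hypothesis supplying this lemma, or upgrade ``$L\gg0$'' to a statement about a positive \emph{density} of contracting behaviour along $\alpha$. Granting that, verifying (\ref{distrn}) — i.e.\ that any geodesic from $\alpha(s_n)$ to a far point of $\beta$ must dip back to within $r_n$ of $o$ — is a routine application of the contracting property (Lemma~\ref{BigThree}(1),(4)) together with the standing assumption $[\alpha^+]\ne[\beta^+]$, which rules out the degenerate fellow-traveling alternative; then Lemma~\ref{lineardiverging} delivers $[\alpha^+]_s\ne[\beta^+]_s$ and the proof concludes.
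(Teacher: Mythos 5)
There is a genuine gap, and you have in fact located it yourself without resolving it. Your construction ties $s_n$ to the far end of the $n$-th contracting segment $I_n$ and produces the bound $d(o,[\alpha(s_n),\beta(t)])\le s_n-L+O(\hat C)$, so $r_n/s_n\to 1$ and the hypothesis (\ref{rnsn}) of Lemma~\ref{lineardiverging} fails. Your proposed repairs — accumulating deficits over $I_1,\dots,I_n$, or reinterpreting ``$L\gg0$'' as requiring segments whose length is proportional to their distance from $o$, or strengthening the hypothesis to a positive density of contracting behaviour — all go in the wrong direction: the lemma is true as stated with a fixed $L$, and no such upgrade is needed. The idea you are missing is that one should not chase the contracting segment nearest to $\alpha(s_n)$; a \emph{single fixed} segment suffices, and a fixed segment yields a \emph{constant} $r_n$, for which $\limsup r_n/s_n=0<1$ trivially.

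Concretely, the paper's argument runs the dichotomy over the index $m$ of the segment $p_m$ rather than over the position of $\alpha(s_n)$. For arbitrary unbounded sequences $x_n\in\alpha$, $y_n\in\beta$: if a geodesic $[x_n,y_n]$ misses $N_C(p_m)$, then the contracting property forces $\pi_{p_m}(y_n)$ to lie near the far end of $p_m$ (where $x_n$ projects), while $\pi_{p_m}(o)$ lies near its near end; since $\ell(p_m)=L\gg 0$, the geodesic $[o,y_n]\subseteq\beta$ must then enter $N_C(p_m)$. If this happens for infinitely many $m$, then $\beta$ meets $N_{r+C}(\alpha)$ in an unbounded set, which forces $[\alpha^+]=[\beta^+]$ (finite difference), contradicting the hypothesis. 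Hence some fixed $p_m$ is crossed by $[x_n,y_n]$ for all large $n$, giving $d(o,[x_n,y_n])\le D:=d(o,p_m)+r+C$ independently of $n$. Taking $r_n\equiv D$ and $s_n=d(o,x_n)\to\infty$ in Lemma~\ref{lineardiverging} finishes the proof. Your dichotomy (fellow-travelling versus peeling off) is the right mechanism, but applied to the wrong segment; as written, your argument does not close, and your ``main obstacle'' paragraph incorrectly concludes that the statement needs a stronger hypothesis.
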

\begin{proof}
Pick up two unbounded sequences of $x_n\in \alpha$ and $y_n\in \beta$. Let $\{p_m: m\ge 1\}$ be the sequence of infinitely many $C$--contracting segments in $N_r(\alpha)$. For each $p_m$, if    $[x_n,y_n]\cap N_C(p_m)= \emptyset$ hold for infinitely many $n\gg 1$, then  $d(o,[x_n,y_n])\le D=d(o,p_m)+r+C$.  It thus follows from Lemma \ref{lineardiverging} that  $[\xi]_s\ne [\eta]_s$. So let us assume that    $[x_n,y_n]\cap N_C(p_m)\ne  \emptyset$  for all large $n\gg 1$.  Passing to  subsequence, we may further assume that $[x_n,y_n]\cap N_C(p_m)= \emptyset$ for each $n,m\ge 1$. The reminder of the proof seeks for contradiction.
% Indeed, otherwise,  the $C$--contracting property of $p_m$ shows $[x_n,y_n]$ intersects $N_C(p_m)$ as $n\to\infty$, if necessary we raise $L_0$ to be a bigger constant. 

%Indeed, arguing by contradiction, assume  $[x_n,y_n]\cap N_C(p_m)=\emptyset$ for infinitely many $n$. 
We next prove that  $\beta\cap N_C(p_m)= \emptyset$ for all large $m$. Indeed, if  $\beta\cap N_C(p_m)\ne  \emptyset$ holds for infinitely many $m$ then the intersection of $\alpha$ and $N_{r+C}(\beta)$ gets unbounded since  $p_m$ is escaping and $p_m\subset N_r(\alpha)$.  By definition of horofunctions, this implies $[\alpha^+]= [\beta^+]$ in the same finite difference class, contradicting the assumption.  
%Hence,  $\beta\cap N_C(p_m)= \emptyset$ for all large $m\gg 1$. 

In summary, we proved that for given $p_m$, the $C$-neighborhood of $p_m$ is disjoint with $\beta$ and $[x_n,y_n]$ for all $n\ge 1$.  We now project $\beta$ and $[x_n,y_n]$ to $p_m$, so $\proj_{p_m}(x_n,y_n),\; \mathrm{diam}(\pi_{p_m}(\beta))\le C$ by the $C$--contracting property. Further, we project the subpath of $\alpha$ before $p_m$ and after $p_m$ as in the proof of Lemma \ref{close barriers in two geodesics}. We would show that $\len(p_m)$ is bounded from above by a constant $L_0$ depending on  $r, C$; the details are left to the interested reader. This contradicts the choice of $\len(p_m)=L>L_0$. The proof is complete.    
\end{proof}

For hyperbolic spaces, the sublinear and finite difference relations turn out to give the same partition on $\hU$. 
\begin{cor}
If $\U$ is a Gromov hyperbolic geodesic space, then $[\hU]_s$ is homeomorphic to the Gromov boundary $\pU$.   
\end{cor}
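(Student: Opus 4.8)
The plan is to show both inclusions of the identification, using the convergence property of the horofunction boundary recorded in Section~2. First I would recall the standard fact that for a proper geodesic Gromov hyperbolic space $\U$, every geodesic ray is $C$--contracting for a uniform constant $C = C(\delta)$ depending only on the hyperbolicity constant $\delta$; more generally, since in a hyperbolic space all quasi-geodesics between two given endpoints fellow-travel, the natural map from the visual (Gromov) boundary $\pU$ to the horofunction boundary $\hU$ is a continuous surjection, and its fibers are exactly the finite-difference classes $[\cdot]$. This is essentially the classical statement that the horofunction and Gromov compactifications agree \emph{up to finite difference}; I would cite the discussion of horofunctions in \cite{BriHae} or the convergence-property facts from \cite[Sec.~5]{YANG22} referenced just above. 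So $[\hU]$ is homeomorphic to $\pU$.

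The remaining content is therefore: in a hyperbolic space, the sublinear-difference relation $[\cdot]_s$ coincides with the finite-difference relation $[\cdot]$ on $\hU$. One inclusion, $[\xi]\subseteq[\xi]_s$, already holds in general and is noted in the text. For the reverse, suppose $b_\xi$ and $b_\eta$ are \emph{not} in the same finite-difference class, i.e.\ $[\alpha^+]\ne[\beta^+]$ where $\alpha,\beta$ are geodesic rays from $o$ representing $\xi,\eta$. I would then invoke Lemma~\ref{SublinearClassLem}: since $\U$ is $\delta$--hyperbolic, the ray $\alpha$ is $C$--contracting with $C=C(\delta)$, hence it trivially contains infinitely many disjoint $C$--contracting subsegments of any fixed length $L$ (in its $0$--neighborhood, $r=0$). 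Choosing $L\gg 0$ as required by that lemma gives $[\alpha^+]_s\ne[\beta^+]_s$. Thus distinct finite-difference classes give distinct sublinear-difference classes, so the two partitions coincide, and $[\hU]_s = [\hU]$ as sets; since $[\hU]_s$ surjects onto $[\hU]$ continuously and they are equal, the quotient topologies agree, giving the homeomorphism with $\pU$.

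I expect the only real subtlety to be the bookkeeping around which "horofunction = Gromov boundary up to finite difference" statement to cite and in what generality it is stated in the references at hand, together with checking that a geodesic ray in a $\delta$--hyperbolic space is contracting with constant depending only on $\delta$ (a standard but worth-stating fact). Everything else is a direct appeal to Lemma~\ref{SublinearClassLem} and Lemma~\ref{lineardiverging}. A clean way to write it: let $\pi\colon \pU\to [\hU]$ be the canonical map, note it is a continuous bijection between compact Hausdorff spaces hence a homeomorphism, then observe $[\hU]_s$ sits between $\pU$ and $[\hU]$ as a further quotient, and the above argument forces $[\hU]_s\to[\hU]$ to be injective, completing the chain.
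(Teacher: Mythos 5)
Your argument is essentially the paper's: the paper's proof is exactly "any geodesic ray in a hyperbolic space is uniformly contracting, so the conclusion follows from Lemma~\ref{SublinearClassLem}," which is the core of your second paragraph. The additional bookkeeping you supply --- the classical identification of $[\hU]$ with the Gromov boundary and the compact-Hausdorff argument for the topology --- is correct and fills in details the paper leaves implicit, but it is not a different route.
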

\begin{proof}
Any geodesic ray in a hyperbolic space is uniformly contracting. The conclusion follows immediately from Lemma \ref{SublinearClassLem}.
%It is well-known that the Gromov boundary is homeomorphic to $[\hU]$. If $\xi\ne \eta$ in  the Gromov boundary,  it is easy to check that for the geodesic rays $\alpha=[o,\xi]$ and $\beta=[o,\eta]$, we have $r_n=d(o,[\alpha(s_n),\beta(t_n)])$ is uniformly bounded over $s_n$ and $t_n$. Thus, $\alpha$ and $\beta$ represent distinct $[\cdot]_s$--classes by Lemma \ref{lineardiverging}. This implies that every $[\cdot]_s$--class contains only one $[\cdot]$--class, so the conclusion follows.  
\end{proof}
%\ywy{The difference of $b_\xi$ and $b_\eta$ is not exactly the distance of two geodesic rays $[o,\xi]$ and $[o,\eta]$. I'm not sure whether the following is correct or not.}
\iffalse
\begin{cor}
If $\U$ is a complete CAT(0) space, then $[\hU]_s$ is homeomorphic to the visual boundary $\pU$. In particular, the sublinear difference relation is trivial on $\hU$.   
\end{cor}
\begin{proof}\ywy{Proof contains gaps}
Let $\alpha, \beta$ be two distinct geodesic rays starting at $o$ representing distinct points in the visual boundary. Given a point $x\in \alpha$, there exists a unique geodesic ray parallel to $\beta$, so if  $y\in \beta$ is the shortest projection point of $x$, we have $\beta^+(x)=\beta^+(y)$. Thus, $$\frac{|\beta^+(x)-\alpha^+(x)|}{d(o,x)}=\frac{|d(o,y)-d(o,x)|}{d(o,x)}$$ Considering the comparison Euclidean triangle of $\Delta(o,x,y)$, we see that the  limit of the above fraction is positive as $d(o,x)\to\infty$. Thus, $\alpha$ and $\beta$ represent distinct $[\cdot]_s$--classes.
\end{proof} 
\qyl{This corollary wants to say, distinct geodesic rays can only diverge at least linearly, not sublinearly? If that is the claim then we can use the convexity of the CAT(0) geometry to show that for two geodesics to be distinct the must be growing at least linearly far apart from each other. }

\ywy{Not really, sublinear difference is not on the distance between two geodesic rays, but on the HOROFUNCTIONS associated to these geodesic rays. The former seems more  natural, but I donot know how to interpret it in terms of horofunctions.}\fi

\subsection{Sublinearly Morse boundaries}\label{sec: sublinear morse bdry}

%\subsubsection{$\kappa$--Morse and $\kappa$--contracting sets.}
Now we introduce a large class of quasi-geodesic rays that are quasi-isometry invariant. Intuitively, these quasi-geodesics have a weak Morse-like property. To begin with, we fix a function that is sublinear in the following sense:
\newline 
\paragraph{\textbf{Sublinear functions}}
We fix a function 
\[
\kappa \from [0,\infty) \to [1,\infty)
\] 
that is monotone increasing, concave and sublinear, that is
\[
\lim_{t \to \infty} \frac{\kappa(t)} t = 0. 
\]
Note that using concavity, for any $a>1$, we have
\begin{equation} \label{Eq:Concave}
\kappa(a t) \leq a \left( \frac 1a \, \kappa (a t) + \left(1- \frac 1a\right) \kappa(0) \right) 
\leq a \, \kappa(t).
\end{equation}

We say a quantity $\sD$ \emph{is small compared to a radius $\rr>0$} if 
\begin{equation} \label{Eq:Small} 
\sD \leq \frac{\rr}{2\kappa(\rr)}. 
\end{equation} 

\begin{rem}
The assumption that $\kappa$ is increasing and concave makes certain arguments
cleaner, otherwise they are not really needed. One can always replace any 
sublinear function $\kappa$, with another sublinear function $\overline \kappa$
so that $\kappa(t) \leq \overline \kappa(t) \leq \sC \, \kappa(t)$ for some constant $\sC$ 
and $\overline \kappa$ is monotone increasing and concave. For example, define 
\[
\overline \kappa(t) = \sup \Big\{ \lambda \kappa(u) + (1-\lambda) \kappa(v) \ST 
\ 0 \leq \lambda \leq 1, \ u,v>0, \ \text{and}\ \lambda u + (1-\lambda)v =t \Big\}.
\]
The requirement $\kappa(t) \geq 1$ is there to remove additive errors in the definition
of $\kappa$--contracting geodesics. 
\end{rem}

\begin{defn}[$\kappa$--neighborhood]  \label{Def:Neighborhood} 
Recall that, for $x \in X$, we have $\Norm{x} = d(o, x)$. To simplify notation, we often drop $\Norm{\param}$. That is, for $x \in X$, we define
\[
\kappa(x) := \kappa(\Norm{x}). 
\]
For a closed set $Z$ and a constant $\nn$, define the $(\kappa, \nn)$--neighbourhood 
of $Z$ to be 
\[
\calN_\kappa(Z, \nn) = \Big\{ x \in \U \ST 
  d(x, Z) \leq  \nn \cdot \kappa(x)  \Big\}.
\]

\end{defn} 

\begin{figure}[h!]
\begin{tikzpicture}
 \tikzstyle{vertex} =[circle,draw,fill=black,thick, inner sep=0pt,minimum size=.5 mm] 
[thick, 
    scale=1,
    vertex/.style={circle,draw,fill=black,thick,
                   inner sep=0pt,minimum size= .5 mm},
                  
      trans/.style={thick,->, shorten >=6pt,shorten <=6pt,>=stealth},
   ]

 \node[vertex] (a) at (0,0) {};
 \node at (-0.2,0) {$o$};
 \node (b) at (10, 0) {};
 \node at (10.6, 0) {$\gamma$};
 \node (c) at (6.7, 2) {};
 \node[vertex] (d) at (6.68,2) {};
 \node at (6.7, 2.4){$x$};
 \node[vertex] (e) at (6.68,0) {};
 \node at (6.7, -0.5){$x_{b}$};
 \draw [-,dashed](d)--(e);
 \draw [-,dashed](a)--(d);
 \draw [decorate,decoration={brace,amplitude=10pt},xshift=0pt,yshift=0pt]
  (6.7,2) -- (6.7,0)  node [black,midway,xshift=0pt,yshift=0pt] {};

 \node at (7.8, 1.2){$\nn \cdot \kappa(x)$};
 \node at (3.6, 0.7){$\Norm x$};
 \draw [thick, ->](a)--(b);
 \path[thick, densely dotted](0,0.5) edge [bend left=12] (c);
\node at (1.4, 1.9){$(\kappa, \nn)$--neighbourhood of $\tau$};
\end{tikzpicture}
\caption{A $\kappa$--neighbourhood of a geodesic ray $\tau$ with multiplicative constant $\nn$.}
\end{figure}

In this paper, $Z$ is either a geodesic or a quasi-geodesic. That is, we can write 
$\calN_{\kappa}(\tau, \nn)$ to mean the $(\kappa, \nn)$--neighborhood of the image of 
the geodesic ray $\tau$. Or, we can use phrases like 
``the quasi-geodesic $\beta$ is $\kappa$--contracting" or 
``the geodesic $\gamma$ is in a $(\kappa, \nn)$--neighbourhood of the geodesic $c$". 

\begin{comment}
\begin{defn} \label{Def:Fellow-Travel}
Let $\beta$ and $\gamma$ be two quasi-geodesic rays in $X$. If $\beta\subseteq \calN_\kappa(\gamma, \nn_1)$ and $\gamma\subseteq \calN_\kappa(\beta, \nn_2)$ for some $\nn_1, \nn_2>0$, we say that $\beta$ and $\gamma$ 
\emph{$\kappa$--fellow travel} each other. This defines an equivalence
relation on the set of quasi-geodesic rays in $X$ (to obtain transitivity, one needs to change 
$\nn$ of the associated $(\kappa, \nn)$--neighbourhood). We refer to such an equivalence
class as a \emph{$\kappa$--equivalence class of quasi-geodesics}.
We denote the $\kappa$--equivalence class that contains $\beta$ by $[\beta]$ or we use 
the notation $\bfb$ (\ywy{what is this bfb? Could you include the definition in the file?}) for such an equivalence
class when no quasi-geodesic in the class is given. 
\end{defn}

Two parametrized quasi-geodesics $\gamma_1,\gamma_2$ are said to be equivalent if their diverge sublinearly, i.e. \[d(\gamma_{1}(t),\gamma_{2}(t))/t\to 0.\]

\ywy{I think these two equivalence relations are the same right?}
\ywy{and what is cite{subcontracting}?}
\begin{lem} \label{Lem:Unique} \cite{subcontracting}
Let $b \from [0,\infty) \to \U$ be a geodesic ray in a CAT(0) space $\U$. Then $b$ is the unique geodesic 
ray in any $(\kappa, \nn)$--neighbourhood of $b$ for any $\nn$. That is to say, distinct geodesic rays do not $\kappa$--fellow travel each other.
\end{lem}

%\begin{proof}
%Consider any other geodesic ray $c \from [0,\infty) \to X$ emanating from the same base-point. Then, there is a time $t_0$ 
%where $b(t_0) \not = c(t_0)$. For a given $t \geq t_0$, let $t'$ be the time so that 
%\[
%d(c(t), b) = d\big(c(t), b(t') \big).  
%\]
%That is, $b(t')$ is the projection of $c(t)$ to $b$. Since $X$ is a \CAT space, we have 
%\[
%d\big(c(t), b(t')\big) \geq 
%  \frac{t}{t_0} \cdot d\left( c(t_0), b\left(\frac{t' \, t_0}t \right)\right)
%  \geq \frac{d\big(c(t_0), b\big)}{t_0} \cdot t.
%\]
%This means that the distance from $c(t)$ to $b$ grows linearly with $t$ and hence $c$ 
%is not contained in any $(\kappa, \nn)$--neighborhood of $b$. 
%\end{proof}

\end{comment}

We recall the definition of $\kappa$--contracting and $\kappa$--Morse sets from \cite{QRT2}.

\begin{defn}[$\kappa$--Morse I] \label{Def:W-Morse} 
We say a closed subset $Z$ of $\U$ is \emph{$\kappa$--Morse} in the first sense if there is a function
\[
\mm_Z \from \mathbb R_{\ge 0}\times \mathbb R_{\ge 0} \to \mathbb R_{\ge 0}
\]
so that if $\beta \from [s,t] \to \U$ is a $(q, Q)$--quasi-geodesic with endpoints 
on $Z$ then
\[
\beta[s,t]  \subseteq\calN_{\kappa} \big(Z,  \mm_Z(q, Q)\big). 
\]
We refer to $\mm_{Z}$ as a \emph{Morse gauge} for $Z$. Without loss of generality one can assume
\begin{equation}
\mm_Z(q, Q) \geq \max(q, Q). 
\end{equation} 
\end{defn}

\begin{defn}[$\kappa$--Morse II] \label{Def:S-Morse} 
We say a closed subset $Z$ of $\U$ is \emph{$\kappa$--Morse} in the second sense if there is a function 
$\mm_Z\from \RR^2 \to \RR$ such that, for every constants $\rr>0$, $\nn>0$ and every
sublinear function $\kappa'$, there is an $\sR= \sR(Z, \rr, \nn, \kappa')>0$ where the 
following holds: Let $\eta \from [0, \infty) \to \U$ be a $(q, Q)$--quasi-geodesic ray 
so that $\mm_Z(q, Q)$ is small compared to $\rr$, let $t_\rr$ be the first time 
$\Norm{\eta(t_\rr)} = \rr$ and let $t_\sR$ be the first time $\Norm{\eta(t_\sR)} = \sR$. Then
\[
d\big(\eta(t_\sR), Z\big) \leq \nn \cdot \kappa'(\sR)
\quad\Longrightarrow\quad
\eta[0, t_\rr] \subseteq\calN_{\kappa}\big(Z, \mm_Z(q, Q)\big). 
\]
\end{defn} 
It is also natural to generalize the notion of contracting to the sublinear setting: 
\begin{defn}[$\kappa$--contracting] \label{Def:kappa-Contracting}
For a closed subspace $Z$ of $\U$, we say $Z$ is \emph{$\kappa$--contracting} if there 
is a constant $\cc_Z$ so that, for every $x,y \in \U$
\[
d(x, y) \leq d( x, Z) \quad \Longrightarrow \quad
\proj_Z \big( x,  y \big) \leq \cc_Z \cdot \kappa(x).
\]
\end{defn} 
\begin{figure}[h!]
\begin{tikzpicture}[scale=0.4]
 \tikzstyle{vertex} =[circle,draw,fill=black,thick, inner sep=0pt,minimum size=.5 mm]
 
[thick, 
    scale=1,
    vertex/.style={circle,draw,fill=black,thick,
                   inner sep=0pt,minimum size= .5 mm},
                  
      trans/.style={thick,->, shorten >=6pt,shorten <=6pt,>=stealth},
   ]
   
    \node[vertex] (a) at (0,0) [label=$\go$]  {};
    \node(b) at (10, 0) {};
    \draw(a)--(b){};
    \draw (1,1) circle (0.5);
     \draw (4,1.5) circle (1);
      \draw (8,2) circle (1.5);
      
      \draw [-, dash dot] (0.5, 1) to [bend left=20] (0.5+0.3, 0);
       \draw[-, dash dot] (1.5, 1) to [bend right=20](1.5-0.3, 0);
       
        \draw[-, blue, very thick](0.8, 0) to (1.2, 0);
       
        \draw [-, dash dot](3, 1.5)to [bend left=20](3+0.6, 0);
        \draw [-, dash dot](5,1.5)to [bend right=20](5-0.6,0);
        
        \draw[-, blue, very thick](3.6, 0) to (4.4, 0);
        
        \draw[-, dash dot](6.5, 2)to [bend left=20](6.5+0.9, 0);
        \draw[-, dash dot](9.5, 2)to [bend right=20](9.5-0.9, 0);
        
        \draw[-, blue, very thick](7.4, 0) to (8.6, 0);
   
\end{tikzpicture}
\caption{A sublinearly contracting geodesic ray}
\end{figure}

The following theorem summarizes the relation among the above three notions.
\begin{thm}[\cite{QRT2}]
Let  $\U$ be a proper geodesic metric space and let $\gamma$ be a quasi-geodesic ray in $\U$. 
Then 
\begin{enumerate}
\item $\gamma$ is $\kappa$--Morse in the first sense  if and only if $\gamma$ is $\kappa$--Morse in the second sense. 
\item If $\gamma$ is $\kappa$--contracting then $\gamma$ is 
$\kappa$--Morse.
\item If $\gamma$ is $\kappa$-Morse then it is $\kappa'$-contracting.
\end{enumerate}
\end{thm}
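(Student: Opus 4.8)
The plan is to prove the three items in the stated order: (1) first, since it is what makes the unqualified term ``$\kappa$--Morse'' well-posed, and then (2) and (3), which together exhibit the $\kappa$--Morse property as sandwiched between $\kappa$--contracting and $\kappa'$--contracting. All three are non-uniform versions of the classical Morse lemma; the one genuinely new feature relative to the uniform setting is that neighbourhood widths and projection diameters are allowed to grow like $\kappa(\Norm{\param})$, so every estimate has to be carried out at the correct scale, and matching scales is where the real work lies.

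For (1), I would pass back and forth between a quasi-geodesic ray that comes close to $Z$ and a quasi-geodesic with both endpoints on $Z$. For Morse I $\Rightarrow$ Morse II: given a $(q,Q)$--quasi-geodesic ray $\eta$ with $\mm_Z(q,Q)$ small compared to $\rr$ and with $d\big(\eta(t_\sR),Z\big)\le\nn\cdot\kappa'(\sR)$, pick $z\in Z$ nearest to $\eta(t_\sR)$ and form the concatenation $\sigma=\eta|_{[0,t_\sR]}\cdot[\eta(t_\sR),z]$, whose endpoints lie on $Z$ (the initial one because $\eta(0)=o\in Z$, the terminal one being $z$) and whose restriction out to radius $\rr$ is $\eta|_{[0,t_\rr]}$. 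Choosing $\sR=\sR(Z,\rr,\nn,\kappa')$ large enough forces the attached geodesic $[\eta(t_\sR),z]$, of length $\nn\cdot\kappa'(\sR)$, to be short relative to $\sR$; this keeps $\sigma$ a quasi-geodesic with controlled constants, and Morse I applied to $\sigma$ then confines $\eta|_{[0,t_\rr]}$ to a $\kappa$--neighbourhood of $Z$ of the required gauge. For the converse, given a $(q,Q)$--quasi-geodesic $\beta\colon[s,t]\to\U$ with endpoints $z_1,z_2\in Z$ and a point $x$ on it, recenter the basepoint at $z_1$ (using \eqref{Eq:Concave} to see $\kappa$--Morse survives a bounded change of basepoint with an enlarged gauge) and regard $\beta$ as a quasi-geodesic ray from $z_1$ which reaches $z_2\in Z$ at radius $d(z_1,z_2)$; if $d(z_1,z_2)$ beats the Morse-II threshold at the scale of $x$, Morse II places $x$ in the desired $\kappa$--neighbourhood, and if it does not, then $d(x,Z)\le d(x,z_1)\le q\,d(z_1,z_2)+Q$ is bounded, so $x$ again lies in a $\kappa$--neighbourhood after enlarging the gauge.

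For (2), let $\beta$ be an arc-length parametrized $(q,Q)$--quasi-geodesic with endpoints on $Z$ and $x$ a point of it; to bound $D:=d(x,Z)$, run the sublinear Morse scheme: let $[a,b]_\beta\ni x$ be the maximal subpath with $d(\param,Z)>D/2$ — a proper subpath, since the endpoints of $\beta$ lie on $Z$, so $d(a,Z)=d(b,Z)=D/2$ — subdivide it into $a=y_0,\dots,y_k=b$ with $d(y_{i-1},y_i)\le D/2\le d(y_{i-1},Z)$, and use the $\kappa$--contracting hypothesis $\proj_Z(y_{i-1},y_i)\le\cc_Z\cdot\kappa(\Norm{y_{i-1}})$ to bound $\mathrm{diam}\big(\pi_Z(a)\cup\pi_Z(b)\big)\le k\cdot\cc_Z\cdot\kappa(R)$, with $R$ the largest norm along $[a,b]_\beta$. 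Comparing this with $\mathrm{diam}\big(\pi_Z(a)\cup\pi_Z(b)\big)\ge d(a,b)-D$, with $d(a,b)\ge\ell([a,b]_\beta)/q-Q$, and with $k\asymp\ell([a,b]_\beta)/D$ bounds the excursion length, and then $D/2=d(a,Z)\le d(a,x)+D/2$ together with $d(a,x)\le\ell([a,b]_\beta)$ closes the loop to give $D\lesssim_{q,Q}\kappa(\Norm{x})$. The one step that does not reduce to the uniform Morse lemma is that here $\kappa$ is evaluated at the scale $R$ of the excursion rather than at a constant, so one must bootstrap — using sublinearity and \eqref{Eq:Concave} — to see that $R$ can be taken comparable to $\Norm{x}$; this typically requires an induction over scales, and I expect it to be the main obstacle. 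It is carried out in \cite{QRT2}.

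For (3), given $x,y$ with $d(x,y)\le d(x,Z)=:d$, choose $p\in\pi_Z(x)$ and $p'\in\pi_Z(y)$, so $d(x,p)=d$, $d(x,y)\le d$ and $d(y,p')=d(y,Z)\le 2d$; the concatenation $\sigma=[p,x]\cdot[x,y]\cdot[y,p']$ is then a $(1,O(d))$--quasi-geodesic from $p\in Z$ to $p'\in Z$. Applying $\kappa$--Morse to $\sigma$ gives $d=d(x,Z)\le\mm_Z\big(1,O(d)\big)\cdot\kappa(\Norm{x})$; combining this with $d(p,p')\le d(p,x)+d(x,y)+d(y,p')$ bounds $\proj_Z(x,y)$ in terms of $\kappa(\Norm{x})$ and $\mm_Z$, and absorbing the dependence of $\mm_Z$ on its additive argument into a new monotone concave sublinear function $\kappa'$ (enlarging $\kappa$ as in the Remark above on regularizing sublinear functions) yields the $\kappa'$--contracting conclusion. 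The reason $\kappa'$ genuinely differs from $\kappa$ in general is exactly this growth of $\mm_Z$ in the additive quasi-geodesic constant, and controlling it is the crux of this implication; the details are again in \cite{QRT2}.
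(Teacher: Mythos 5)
First, a structural point: the paper does not prove this theorem at all — it is stated verbatim as an imported result from \cite{QRT2} — so there is no internal argument to compare your sketch against; at the level of this paper the ``proof'' is the citation. Your outline is broadly consistent with the strategy of the source, and you correctly identify the two genuine crux points, namely the scale-matching bootstrap in (2) (evaluating $\kappa$ at the scale of the excursion versus at $\Norm{x}$) and the growth of the Morse gauge in its additive argument in (3); you defer both to \cite{QRT2}, which is exactly what the paper does wholesale.

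The one place where your sketch asserts more than it delivers is the direction Morse I $\Rightarrow$ Morse II in (1). Appending the geodesic $[\eta(t_\sR), z]$ of length at most $\nn\cdot\kappa'(\sR)$ to $\eta|_{[0,t_\sR]}$ does \emph{not} keep the concatenation a quasi-geodesic ``with controlled constants'': the concatenated path can double back on itself over a length of order $\kappa'(\sR)$ near its terminal point, so in general $\sigma$ is only a $(q,\, Q+2\nn\kappa'(\sR))$--quasi-geodesic, and this additive constant is unbounded in $\sR$ (sublinear, but unbounded). A single application of Morse I to $\sigma$ therefore only places $\eta|_{[0,t_\rr]}$ in $\calN_\kappa\big(Z,\ \mm_Z(q,\, Q+2\nn\kappa'(\sR))\big)$, whose gauge degenerates as $\sR\to\infty$ rather than being the required fixed $\mm_Z(q,Q)$. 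This is precisely the difficulty that the Morse II formulation — with its threshold $\sR=\sR(Z,\rr,\nn,\kappa')$ and the hypothesis that $\mm_Z(q,Q)$ be small compared to $\rr$ — is designed around, and the argument in \cite{QRT2} resolves it with a more careful multi-scale estimate rather than one application of Morse I to a concatenation. If you intend (1) to be self-contained, this step needs to be repaired; otherwise it should be deferred to the reference as you do for (2) and (3).
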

In the sequel, we say a set is \textit{$\kappa$--Morse} if it is either $\kappa$--Morse  in  either sense.

Analogous to Lemma 2.3 (2), $\kappa$--Morse  is a property that can also be established by approximity. That is to say,  if a quasi-geodesic ray  $\alpha$ sublinearly fellow travel a $\kappa$--Morse ray $\beta$, then $\alpha$ is also $\kappa$--Morse. We now make this precise. First assume without loss of generality that a quasi-geodesic ray is a continuous path. Define 
\begin{align*}
\alpha_r :=
  \{ \alpha(t_0) \,|\, &\alpha(t_0) \in (\alpha \cap X \backslash B(o, r) ),\text{ and for any other }\\
&\text{} \alpha(t)\in \alpha \cap X \backslash B(o, r), \text { we have } t_0 \leq t \}\\
\end{align*}
We say two quasi-geodesic rays \emph{sublinearly fellow travel} if 
\[ \lim_{r \to \infty} \frac{d(\alpha_r, \beta_r)}{r} = 0.\]
\begin{thm}\cite{QRT2}
Let $\alpha$ be a $(q_1, Q_2)$--quasi-geodesic ray and let $\beta$ be a $(q_2, Q_2)$--quasi-geodesic ray that is $\kappa$--Morse. If $\alpha$ and $\beta$ sublinearly fellow travel, then there exists a $\kappa$--neighborhood depending only on $(q_1, Q_1)$ and $(q_2, Q_2)$ such that 
\[
\alpha \in N_\kappa \big (\beta, m((q_1, Q_1), (q_2, Q_2))\big).\]
Furthermore, $\alpha$ is a $\kappa$--Morse ray.
\end{thm}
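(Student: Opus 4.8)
The plan is to prove the two assertions in turn: first the inclusion $\alpha\subseteq\calN_\kappa(\beta,m)$, and then, bootstrapping from it together with standard regularity of the $\kappa$--Morse ray $\beta$, that $\alpha$ is itself $\kappa$--Morse. The driving tool is the $\kappa$--Morse~II reformulation of Definition~\ref{Def:S-Morse} applied to $\beta$: unlike $\kappa$--Morse~I, the neighbourhood constant $\mm_\beta(q,Q)$ in its conclusion is \emph{insensitive} to how the test ray behaves at the outer radius $\sR$, which is exactly what one needs to upgrade a merely sublinear --- a priori not $\kappa$--sized --- closeness at radius $\sR$ into $\kappa$--closeness at all smaller radii. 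For the first inclusion I would apply Definition~\ref{Def:S-Morse} to $Z=\beta$ with the test ray $\eta:=\alpha$, a $(q_1,Q_1)$--quasi-geodesic ray. Since $\alpha$ is continuous, $\eta(t_\sR)$ --- the first point of $\alpha$ of norm $\sR$ --- is exactly the point $\alpha_\sR$ defined before the statement, so sublinear fellow travelling gives $d(\eta(t_\sR),\beta)\le d(\alpha_\sR,\beta_\sR)=\epsilon(\sR)\,\sR$ with $\epsilon(\sR)\to 0$; since $s\mapsto\epsilon(s)\,s$ is sublinear, I fix \emph{once and for all} a monotone increasing concave sublinear $\kappa'$ with $\epsilon(s)\,s\le\kappa'(s)$ for all $s$ (possible by the concave-hull construction in the Remark following the definition of sublinear functions), so that with $\nn=1$ the hypothesis $d(\eta(t_\sR),\beta)\le\nn\kappa'(\sR)$ holds at $\sR=\sR(\beta,\rr,1,\kappa')$ for \emph{every} $\rr$. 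Given $x\in\alpha$ of large norm, I then take $\rr$ to be a suitable multiple of $\Norm x$ (depending on $q_1,Q_1$) large enough that $x\in\eta[0,t_\rr]$ and that $\mm_\beta(q_1,Q_1)$ is small compared to $\rr$ --- possible because $\rr/(2\kappa(\rr))\to\infty$; the conclusion of Definition~\ref{Def:S-Morse} then gives $\eta[0,t_\rr]\subseteq\calN_\kappa(\beta,\mm_\beta(q_1,Q_1))$, hence $d(x,\beta)\le\mm_\beta(q_1,Q_1)\,\kappa(x)$. Points of bounded norm are handled directly using $\kappa\ge 1$, and $m$ is the larger of the two resulting constants.

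Next I would record the reverse inclusion $\beta\subseteq\calN_\kappa(\alpha,m')$, which is where the regularity of $\beta$ enters. Since $\beta$ is $\kappa$--Morse, hence $\kappa$--contracting (after the usual harmless adjustment of the sublinear function), nearest-point projection onto $\beta$ is coarsely well behaved and $\beta$ has $\kappa$--thin neighbourhoods; concretely, every point of $\beta$ of norm $r$ lies within $O(\kappa(r))$ of the first-exit point $\beta_r$ (a standard property of $\kappa$--Morse rays, cf.\ \cite{QRT2}). By the first step, $d(\alpha_r,\beta)\le m\kappa(r)$, so $d(\alpha_r,\beta_r)=O(\kappa(r))$; hence for any $w\in\beta$ of norm $r$, $d(w,\alpha)\le d(w,\beta_r)+d(\beta_r,\alpha_r)=O(\kappa(r))$, giving $\beta\subseteq\calN_\kappa(\alpha,m')$. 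In particular $\alpha$ and $\beta$ have $\kappa$--sized Hausdorff distance.

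To prove the second assertion I would verify Definition~\ref{Def:S-Morse} for $\alpha$: choose its Morse gauge with $\mm_\alpha\ge\mm_\beta$, and set $\sR(\alpha,\rr,\nn,\kappa'):=\sR(\beta,\rr,\nn+m,\max\{\kappa',\kappa\})$. Let $\eta$ be a $(q,Q)$--quasi-geodesic ray with $\mm_\alpha(q,Q)$ small compared to $\rr$ (so $\mm_\beta(q,Q)$ is too) and with $d(\eta(t_\sR),\alpha)\le\nn\kappa'(\sR)$. Since $\Norm{\eta(t_\sR)}=\sR$, the first inclusion gives $d(\eta(t_\sR),\beta)\le\nn\kappa'(\sR)+m\kappa(\sR)\le(\nn+m)\max\{\kappa'(\sR),\kappa(\sR)\}$; thus Definition~\ref{Def:S-Morse} for $\beta$, at parameters $\rr,\nn+m,\max\{\kappa',\kappa\}$ --- whose associated outer radius is precisely our $\sR$ by the choice above --- yields $\eta[0,t_\rr]\subseteq\calN_\kappa(\beta,\mm_\beta(q,Q))$. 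Finally, for $z\in\eta[0,t_\rr]$, picking $w\in\beta$ with $d(z,w)\le\mm_\beta(q,Q)\,\kappa(z)$ we have $\Norm w\le 2\Norm z$, so $\kappa(w)\le 2\kappa(z)$ by~\eqref{Eq:Concave}, and the reverse inclusion gives $d(w,\alpha)\le m'\kappa(w)\le 2m'\kappa(z)$; hence $d(z,\alpha)\le(\mm_\beta(q,Q)+2m')\,\kappa(z)$. Taking $\mm_\alpha(q,Q):=\max\{\mm_\beta(q,Q)+2m',\,q,\,Q\}$ completes the verification (bounded-norm contributions again being trivial), and the resulting gauge depends only on $(q,Q)$, on $(q_1,Q_1)$, and on the data of $\beta$.

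I expect the main obstacle to be the middle step: extracting the $\kappa$--thin-neighbourhood (coarse monotonicity) property of the $\kappa$--Morse ray $\beta$, and carrying out the $\kappa$--error bookkeeping so that in passing from a point to a nearby point of $\beta$ and then back to a nearby point of $\alpha$ one never picks up $\kappa$ composed with itself --- this is where~\eqref{Eq:Concave} and the control $\Norm w\asymp\Norm z$ on the auxiliary points are used. The only other subtlety is the quantifier order in the first step, where $\kappa'$ must be fixed before the radius $\sR(\beta,\rr,1,\kappa')$ is named (hence the domination $\epsilon(s)\,s\le\kappa'(s)$ is arranged for all $s$); the $\kappa$--Morse~II machinery is otherwise formal.
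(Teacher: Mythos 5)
The paper offers no proof of this statement---it is imported verbatim from \cite{QRT2}---so there is nothing internal to compare against; judged on its own, your argument is essentially the standard one from the source: use the $\kappa$--Morse~II formulation of $\beta$ (Definition~\ref{Def:S-Morse}) with the test ray $\alpha$, fixing the dominating sublinear function $\kappa'$ before the radius $\sR$ is named, to get $\alpha\subseteq\calN_\kappa(\beta,m)$, and then transfer Morse~II from $\beta$ to $\alpha$ with $\kappa$--error bookkeeping via~\eqref{Eq:Concave}. The quantifier handling and the constant-tracking are correct. The one step that is not self-contained is exactly the one you flag: the reverse inclusion $\beta\subseteq\calN_\kappa(\alpha,m')$ rests on the coarse-monotonicity (``$\kappa$--thin level sets'') property of a $\kappa$--Morse quasi-geodesic ray, i.e.\ that all points of $\beta$ at norm $r$ lie within $O(\kappa(r))$ of $\beta_r$. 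This is true and is a lemma of \cite{QRT2}, but it does not follow formally from the definitions reproduced in this paper (note in particular that the summary theorem here only gives that $\kappa$--Morse implies $\kappa'$--contracting for a possibly different sublinear $\kappa'$, so the contracting route you sketch would a priori yield $O(\kappa'(r))$ rather than $O(\kappa(r))$); to make the proof self-contained one must prove that lemma, typically by comparing $\beta$ with a limit of geodesics $[\beta(0),\beta(T)]$ via a surgery argument. With that lemma granted, the proposal is complete.
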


Lastly, a quasi-geodesic is called \emph{sublinearly Morse} if it is $\kappa$--Morse for some sublinearly growing function $\kappa$.

\begin{defn}[Sublinearly Morse boundary]\label{kappaBdryDefn}
Given a sublinear function $\kappa$, let $\partial_{\kappa}X$ denote the set of equivalence classes of $\kappa$--Morse quasi-geodesics. Equipped with a coarse version of cone topology, we call this set the \emph{$\kappa$--Morse boundary} of $\U$ and denote it $\pka \U$ (for more details, see \cite{QRT2}). 
\end{defn}
It is shown in \cite{QRT2} that $\U\cup \partial_{\kappa}\U$ with the coarse cone topology is a QI-invariant space and a metrizable topological space.
\iffalse %CONTAINS GAPS
\begin{lem}\label{SubMorseInHorobdry}
The sublinear Morse boundary $\pka \U$ of a proper geodesic metric space $\U$ injects as a set into the quotient space $[\hU]_s$.
\end{lem}
\begin{proof}
Any point $\xi$ in $\pka \U$  is represented by a geodesic ray $\alpha$, so it determines a point $\alpha^+\in \pU$. If $\beta$ is another geodesic ray representing $\xi$, then ${d(\alpha(t),\beta(t))}=o({t})$. Given $x\in \U$ with $d(o,x)\ge n$, we have $$\begin{aligned}\frac{|\alpha^+(x)-\beta^+(x)|}{d(o,x)}&
\le \frac{| \lim_{t\to\infty} [d(\alpha(t),x)-t]-\lim_{t\to\infty} [d(\beta(t),x)-t]|}{n}\\
&\le \frac{\lim_{t\to\infty} [d(\alpha(t),x)-d(\beta(t),x)]}{n}\le \frac{d(\alpha(t),\beta(t))}{n} \le o(t)
\end{aligned}$$ so $[\alpha^+]_s=[\beta^+]_s$. This sets up a well-defined map $\xi\mapsto [\xi]_s$. Is it injective???   
\end{proof}
\fi

\subsection{Convergence boundary}\label{sec convergence bdry}
In this subsection, we discuss an axiomatic approach introduced in \cite{YANG22} to the boundary of proper geodesic metric spaces in presence of contracting subsets. The motivating examples are the Gromov boundary of hyperbolic spaces and visual boundary of CAT(0) spaces.   To  describe  further examples in \ref{ConvbdryExamples}, we need to introduce some terminology.

Let $(\U, d)$ be a proper metric space admitting an isometric action of a non-elementary countable group $\Gamma$ with a contracting element. Consider a metrizable compactification $\bU:=\pU\cup \U$, so that $\U$ is open and dense in $\bU$. We also assume that the action of $\textrm{Isom}(\U)$ extends by homeomorphism to the boundary  $\pU$. 

We   equip $\pU$    with a  $\isom(\U)$--invariant  partition $[\cdot]$:   $[\xi]=[\eta]$ implies $[g\xi]=[g\eta]$ for any $g\in \isom(\U)$.   The  \textit{locus} $[Z]$  of a subset $Z\subseteq\pU$ is the union of all $[\cdot]$--classes of $\xi\in Z$.  We say that $\xi$ is \textit{minimal} if $[\xi]=\{\xi\}$, and a subset $U$ is \textit{$[\cdot]$--saturated} if $U=[U]$.

We say that $[\cdot]$ restricts to be a \textit{closed} partition on a $[\cdot]$--saturated subset $U\subseteq \pU$ if  $x_n\in U\to \xi\in \pU$ and $y_n\in U\to\eta\in \pU$ are two sequences with $[x_n]=[ y_n]$, then $[\xi]=[\eta]$. (Possibly $\xi, \eta$ are not in $U$ anymore.) If $U=\pU$, this is equivalent to say that the relation $\{(\xi,\eta): [\xi]=[\eta]\}$ is a closed subset in $\pU\times \pU$, so the quotient space $[\pU]$ is Hausdorff. In general, $[\cdot]$ may  be not closed over the whole $\pU$.% (say, the horofunction boundary with finite difference relation).%, but is closed on certain interesting subsets as requested in Assumption (C) below.  

For completeness, let us fix a few terminologies in general topology. 
Let $x_n$ be a sequence of points in $\U$. We say that $x_n$ \textit{converges} (or \textit{tends}) to a point $\xi\in \pU$  if any open neighborhood of $\xi$ in $\U\cup \pU$ contains all but finitely many $x_n$. Accordingly, $\xi$ is called the \textit{limit point} of $x_n$ and  $x_n$ is  a \textit{convergent} sequence. Given a subset $U$ in $X$, the limit points of all convergent sequences in  $U$ form \textit{accumulation points} of $U$.

With respect to the given partition, we say that  $x_n\in X$ \textit{accumulates}  to $[\xi]$ if the  accumulate points of the set $\{x_n: n\ge 1\}$ are contained in $[\xi]$. This implies that the sequence of $[\cdot]$-classes $[x_n]$ tends to the $[\cdot]$-class $[\xi]$ in the quotient space $[\pU]$. By abusing language, we also say that $x_n$ \textit{tends} to $[\xi]$. Further,  an infinite ray $\gamma$ \textit{terminates} at a point in $[\xi] \in \pU$ if any unbounded sequence of points on $\gamma$ accumulates in $[\xi]$. 

Let $A$ be a $C$-contracting subset in $\U$ for some $C>0$. Define the \textit{strong cone} of a subset $A$  at $o$ as follows $$\Omega_o(A)=\{x\in \U: \mathrm{diam}([o,x]\cap A)\ge 10C\}$$ A sequence of subsets $A_n$ is called \textit{escaping} if $d(o,A_n)\to \infty$ for some (or any) $o\in \U$.

\begin{defn}\label{ConvBdryDefn}
    
We say that $(\pU,[\cdot])$ is a \textit{convergence compactification} of $X$ if the following assumptions hold.
\begin{enumerate}
    \item[\textbf{(A)}]Any contracting geodesic ray $\gamma$ accumulates to a $[\cdot]$-class in $\pU$, denoted as $[\gamma^+]$, that is a closed subset. Moreover, any sequence of    $y_n\in \U$ with escaping projections $\pi_\gamma(y_n)$ accumulates to $[\gamma^+]$. 

    \item[\textbf{(B)}]
    Let $\{\gamma_n\subseteq \U :n\ge 1\}$ be an escaping sequence of $C$--contracting  quasi-geodesics for some $C>0$. Then for any given $o\in \U$, there exist an escaping subsequence of $\{\gamma_n\cup \Omega_o(\gamma_n): n\ge 1\}$ denoted by $A_n$  and a $[\cdot]$-class of some point $\xi\in \pU$ such that $A_n$  accumulates    into $[\xi]$:
    
    Any convergent sequence of points $x_n\in A_n$ tends to a point in $[\xi]$.
    \item[\textbf{(C)}]
    The set   $\mathcal C$ of \textit{non-pinched} points  $\xi\in \pU$ is non-empty. If $x_n, y_n\in \U$ are two sequences of points converging to $[\xi]$, then $[x_n,y_n]$ is an escaping sequence of geodesic segments.  
\end{enumerate}  
\end{defn}  
\begin{figure}
    \centering

\tikzset{every picture/.style={line width=0.75pt}} %set default line width to 0.75pt        

\begin{tikzpicture}[x=0.75pt,y=0.75pt,yscale=-1,xscale=1]
%uncomment if require: \path (0,300); %set diagram left start at 0, and has height of 300

%Straight Lines [id:da6801304211610646] 
\draw    (27.5,123) -- (217.5,123) ;
\draw [shift={(219.5,123)}, rotate = 180] [color={rgb, 255:red, 0; green, 0; blue, 0 }  ][line width=0.75]    (10.93,-3.29) .. controls (6.95,-1.4) and (3.31,-0.3) .. (0,0) .. controls (3.31,0.3) and (6.95,1.4) .. (10.93,3.29)   ;
\draw [shift={(27.5,123)}, rotate = 0] [color={rgb, 255:red, 0; green, 0; blue, 0 }  ][fill={rgb, 255:red, 0; green, 0; blue, 0 }  ][line width=0.75]      (0, 0) circle [x radius= 3.35, y radius= 3.35]   ;
%Curve Lines [id:da4277597460569684] 
\draw    (146.5,78) .. controls (135.94,102) and (140.13,107.57) .. (144.02,121.25) ;
\draw [shift={(144.5,123)}, rotate = 255.07] [color={rgb, 255:red, 0; green, 0; blue, 0 }  ][line width=0.75]    (10.93,-3.29) .. controls (6.95,-1.4) and (3.31,-0.3) .. (0,0) .. controls (3.31,0.3) and (6.95,1.4) .. (10.93,3.29)   ;
\draw [shift={(146.5,78)}, rotate = 113.75] [color={rgb, 255:red, 0; green, 0; blue, 0 }  ][fill={rgb, 255:red, 0; green, 0; blue, 0 }  ][line width=0.75]      (0, 0) circle [x radius= 3.35, y radius= 3.35]   ;
%Shape: Ellipse [id:dp7897587100437391] 
\draw   (393,102) .. controls (393,90.95) and (408.68,82) .. (428.01,82.01) .. controls (447.34,82.01) and (463,90.97) .. (463,102.01) .. controls (463,113.06) and (447.33,122.01) .. (428,122.01) .. controls (408.67,122) and (393,113.05) .. (393,102) -- cycle ;
%Shape: Ellipse [id:dp22425643702990228] 
\draw   (310,107.98) .. controls (310,96.93) and (325.67,87.98) .. (345,87.99) .. controls (364.33,87.99) and (380,96.95) .. (380,107.99) .. controls (380,119.04) and (364.33,127.99) .. (345,127.99) .. controls (325.67,127.98) and (310,119.03) .. (310,107.98) -- cycle ;
%Curve Lines [id:da8931704937311309] 
\draw    (257.5,110.97) .. controls (307.5,113.98) and (376.5,118.99) .. (411.5,132) ;
\draw [shift={(411.5,132)}, rotate = 20.39] [color={rgb, 255:red, 0; green, 0; blue, 0 }  ][fill={rgb, 255:red, 0; green, 0; blue, 0 }  ][line width=0.75]      (0, 0) circle [x radius= 3.35, y radius= 3.35]   ;
%Curve Lines [id:da7230316762704059] 
\draw    (257.5,110.97) .. controls (296.5,102.98) and (433.5,89) .. (478.5,78) ;
\draw [shift={(478.5,78)}, rotate = 346.26] [color={rgb, 255:red, 0; green, 0; blue, 0 }  ][fill={rgb, 255:red, 0; green, 0; blue, 0 }  ][line width=0.75]      (0, 0) circle [x radius= 3.35, y radius= 3.35]   ;
\draw [shift={(257.5,110.97)}, rotate = 348.42] [color={rgb, 255:red, 0; green, 0; blue, 0 }  ][fill={rgb, 255:red, 0; green, 0; blue, 0 }  ][line width=0.75]      (0, 0) circle [x radius= 3.35, y radius= 3.35]   ;
%Curve Lines [id:da06488231904786601] 
\draw  [dash pattern={on 4.5pt off 4.5pt}]  (478.5,78) .. controls (508.04,93.74) and (489.09,100.79) .. (522.92,106.73) ;
\draw [shift={(524.5,107)}, rotate = 189.46] [color={rgb, 255:red, 0; green, 0; blue, 0 }  ][line width=0.75]    (10.93,-3.29) .. controls (6.95,-1.4) and (3.31,-0.3) .. (0,0) .. controls (3.31,0.3) and (6.95,1.4) .. (10.93,3.29)   ;
\draw [shift={(478.5,78)}, rotate = 28.05] [color={rgb, 255:red, 0; green, 0; blue, 0 }  ][fill={rgb, 255:red, 0; green, 0; blue, 0 }  ][line width=0.75]      (0, 0) circle [x radius= 3.35, y radius= 3.35]   ;
%Curve Lines [id:da21072210820562565] 
\draw  [dash pattern={on 4.5pt off 4.5pt}]  (146.5,78) .. controls (176.19,93.82) and (150.03,95.96) .. (212.58,113.46) ;
\draw [shift={(214.5,114)}, rotate = 195.48] [color={rgb, 255:red, 0; green, 0; blue, 0 }  ][line width=0.75]    (10.93,-3.29) .. controls (6.95,-1.4) and (3.31,-0.3) .. (0,0) .. controls (3.31,0.3) and (6.95,1.4) .. (10.93,3.29)   ;
\draw [shift={(146.5,78)}, rotate = 28.05] [color={rgb, 255:red, 0; green, 0; blue, 0 }  ][fill={rgb, 255:red, 0; green, 0; blue, 0 }  ][line width=0.75]      (0, 0) circle [x radius= 3.35, y radius= 3.35]   ;
%Curve Lines [id:da17846536477484798] 
\draw    (202.5,267) .. controls (180.5,254) and (181.5,222) .. (203.5,208) ;
\draw [shift={(203.5,208)}, rotate = 327.53] [color={rgb, 255:red, 0; green, 0; blue, 0 }  ][fill={rgb, 255:red, 0; green, 0; blue, 0 }  ][line width=0.75]      (0, 0) circle [x radius= 3.35, y radius= 3.35]   ;
\draw [shift={(202.5,267)}, rotate = 210.58] [color={rgb, 255:red, 0; green, 0; blue, 0 }  ][fill={rgb, 255:red, 0; green, 0; blue, 0 }  ][line width=0.75]      (0, 0) circle [x radius= 3.35, y radius= 3.35]   ;
%Straight Lines [id:da1153290975468293] 
\draw    (86.12,240) -- (183.5,240) ;
\draw [shift={(185.5,240)}, rotate = 180] [color={rgb, 255:red, 0; green, 0; blue, 0 }  ][line width=0.75]    (10.93,-3.29) .. controls (6.95,-1.4) and (3.31,-0.3) .. (0,0) .. controls (3.31,0.3) and (6.95,1.4) .. (10.93,3.29)   ;
\draw [shift={(86.12,240)}, rotate = 0] [color={rgb, 255:red, 0; green, 0; blue, 0 }  ][fill={rgb, 255:red, 0; green, 0; blue, 0 }  ][line width=0.75]      (0, 0) circle [x radius= 3.35, y radius= 3.35]   ;
%Curve Lines [id:da6227887857681849] 
\draw  [dash pattern={on 4.5pt off 4.5pt}]  (230.5,209) .. controls (268.92,217.87) and (253,232.55) .. (294.56,233.95) ;
\draw [shift={(296.5,234)}, rotate = 181.3] [color={rgb, 255:red, 0; green, 0; blue, 0 }  ][line width=0.75]    (10.93,-3.29) .. controls (6.95,-1.4) and (3.31,-0.3) .. (0,0) .. controls (3.31,0.3) and (6.95,1.4) .. (10.93,3.29)   ;
%Curve Lines [id:da18331422860761903] 
\draw  [dash pattern={on 4.5pt off 4.5pt}]  (234.5,269) .. controls (258.14,262.11) and (253.64,241.63) .. (295.55,239.1) ;
\draw [shift={(297.5,239)}, rotate = 177.4] [color={rgb, 255:red, 0; green, 0; blue, 0 }  ][line width=0.75]    (10.93,-3.29) .. controls (6.95,-1.4) and (3.31,-0.3) .. (0,0) .. controls (3.31,0.3) and (6.95,1.4) .. (10.93,3.29)   ;

% Text Node
\draw (122,131.4) node [anchor=north west][inner sep=0.75pt]    {$\pi _{\gamma }( y_{n})$};
% Text Node
\draw (160,61.4) node [anchor=north west][inner sep=0.75pt]    {$y_{n}$};
% Text Node
\draw (222,112.4) node [anchor=north west][inner sep=0.75pt]    {$[ \xi ]$};
% Text Node
\draw (21,127.4) node [anchor=north west][inner sep=0.75pt]    {$o$};
% Text Node
\draw (337,133.39) node [anchor=north west][inner sep=0.75pt]  [rotate=-0.01]  {$\gamma _{1}$};
% Text Node
\draw (527,97.42) node [anchor=north west][inner sep=0.75pt]  [rotate=-0.01]  {$[ \xi ]$};
% Text Node
\draw (416,89.4) node [anchor=north west][inner sep=0.75pt]  [rotate=-0.01]  {$\gamma _{n}$};
% Text Node
\draw (408,138.39) node [anchor=north west][inner sep=0.75pt]  [rotate=-0.01]  {$y_{1} \in \Omega _{o}( \gamma _{1})$};
% Text Node
\draw (443,48.4) node [anchor=north west][inner sep=0.75pt]    {$y_{n} \in \Omega _{o}( \gamma _{n})$};
% Text Node
\draw (253.5,91.37) node [anchor=north west][inner sep=0.75pt]    {$o$};
% Text Node
\draw (68.57,230.4) node [anchor=north west][inner sep=0.75pt]    {$o$};
% Text Node
\draw (212,200.4) node [anchor=north west][inner sep=0.75pt]    {$x_{n}$};
% Text Node
\draw (213,260.4) node [anchor=north west][inner sep=0.75pt]    {$y_{n}$};
% Text Node
\draw (301,227.4) node [anchor=north west][inner sep=0.75pt]    {$[ \xi ] \subseteq \mathcal{C}$};
% Text Node
\draw (370,228.4) node [anchor=north west][inner sep=0.75pt]    {$\Longrightarrow $};
% Text Node
\draw (421,224.4) node [anchor=north west][inner sep=0.75pt]    {$d( o,[ x_{n} ,y_{n}])\rightarrow \infty $};
% Text Node
\draw (18,10) node [anchor=north west][inner sep=0.75pt]   [align=left] {(\textbf{A})};
% Text Node
\draw (251,10) node [anchor=north west][inner sep=0.75pt]   [align=left] {(\textbf{B})};
% Text Node
\draw (20,232) node [anchor=north west][inner sep=0.75pt]   [align=left] {(\textbf{C})};

\end{tikzpicture}
    \caption{Illustrate Assumptions (A)(B)(C) in Definition \ref{ConvBdryDefn}}
    \label{fig:convbdry}
\end{figure}
\begin{examples}\label{ConvbdryExamples}
The first three convergence boundaries below are equipped with  a \textit{maximal} partition $[\cdot]$ (i.e. each $[\cdot]$-class is singleton). See \cite{YANG22} for more details.
    \begin{enumerate}
        \item 
        Hyperbolic space $\U$ with Gromov boundary $\pU$, where  all boundary points are non-pinched.
        \item 
        CAT(0) space $\U$ with visual boundary $\pU$ (homeomorphic to horofunction boundary), where all boundary points are non-pinched.
        \item
        The Cayley graph $\U$ of a relatively hyperbolic group with Bowditch or Floyd boundary $\pU$, where  conical limit points are non-pinched.

        If $\U$ is infinitely ended, we could also take $\pU$ as the end boundary with the same statement.
        \item
        Teichm\"{u}ller space $\U$ with Thurston  boundary $\pU$, where $[\cdot]$ is given by Kaimanovich-Masur partition \cite{KaMasur} and uniquely ergodic   points are non-pinched.
        \item
        Any proper metric space $\U$ with horofunction boundary $\pU$, where $[\cdot]$ is given by finite  difference partition and all boundary points are non-pinched.
        If $\U$ is the cubical CAT(0) space, the horofunction boundary is exactly the Roller boundary. 
        If $\U$ is the Teichm\"{u}ller space with Teichm\"{u}ller metric, the horofunction boundary is  the Gardiner-Masur  boundary (\cite{LS14,Walsh19}). 
    \end{enumerate}
\end{examples}
From these examples, it appears that a fixed proper metric space can possess multiple distinct convergence boundaries. In applications, the horofunction boundary provides  a meaningful and non-trivial convergence boundary for \textit{any} proper action.
\begin{thm}\label{HorobdryConvergence}\cite[Theorem 1.1]{YANG22}
Assume that $\U$ is a proper metric space which contains at least one contracting geodesic ray. Then  the horofunction boundary of $X$ is a convergence boundary with finite difference relation $[\cdot]$, where all boundary points are non-pinched. 
\end{thm}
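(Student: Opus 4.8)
The plan is to verify the three axioms \textbf{(A)}, \textbf{(B)}, \textbf{(C)} of Definition \ref{ConvBdryDefn} for the pair $(\bU,[\cdot])$ with $[\cdot]$ the finite difference relation, and to show every $\xi\in\hU$ is non-pinched, i.e.\ $\mathcal C=\hU$. Everything rests on the identity $b_x(z)=d(z,x)-d(o,x)$, on the fact (recalled above) that a geodesic ray $\gamma$ converges in $\bU$ to the unique horofunction $\gamma^+$, and on the standard contracting toolkit: quasiconvexity of contracting sets (Lemma \ref{BigThree}(1)), the fact that for a $C$--contracting set $\gamma$ and $p=\pi_\gamma(x)$ the concatenation $[o,p]\cup[p,x]$ is a uniform quasi-geodesic, and the fact that a geodesic whose endpoints project far apart along $\gamma$ must $\hat C(C)$--fellow-travel $\gamma$ over that range. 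I begin with \textbf{(C)}. Let $\xi\in\hU$ and let $x_n\to[\xi]$, $y_n\to[\xi]$, with limits $b_\xi=\lim b_{x_n}$ and $b_\eta=\lim b_{y_n}$ lying in $[\xi]$. If \textbf{(C)} failed, after passing to a subsequence there would be $z_n\in[x_n,y_n]$ with $d(o,z_n)\le D$, and by properness $z_n\to z$. Fixing $R>0$ and letting $u_n\in[x_n,y_n]$ be the point on the $x_n$--side of $z_n$ at distance $R$ from $z_n$ (defined for $n$ large, since $d(z_n,x_n)\ge d(o,x_n)-D\to\infty$), the order of the points on the geodesic $[x_n,y_n]$ gives
\[
b_{x_n}(u_n)=b_{x_n}(z_n)-R\le D-R,\qquad b_{y_n}(u_n)=b_{y_n}(z_n)+R\ge R-D .
\]
Arzela-Ascoli lets us subconverge $[z_n,x_n]$ to a geodesic ray $\gamma$ with $\gamma(0)=z$, so $u_n\to\gamma(R)$ and the two inequalities pass to the limit to give $b_\eta(\gamma(R))-b_\xi(\gamma(R))\ge 2(R-D)$; letting $R\to\infty$ shows $\|b_\xi-b_\eta\|_\infty=\infty$, contradicting $[b_\xi]=[b_\eta]$. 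Hence $d(o,[x_n,y_n])\to\infty$, proving \textbf{(C)} with $\mathcal C=\hU$.

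For \textbf{(A)}: a contracting geodesic ray $\gamma$ converges to the single point $\gamma^+$, so it accumulates into $[\gamma^+]$. If $\pi_\gamma(y_n)$ is escaping, then since $[o,\pi_\gamma(y_n)]\cup[\pi_\gamma(y_n),y_n]$ is a uniform quasi-geodesic, $[o,y_n]$ $\hat C(C)$--fellow-travels $\gamma$ up to a time $T_n\to\infty$; combining this with the asymptotic $d(z,\gamma(t))=t+b_{\gamma^+}(z)+o(1)$ and computing $d(z,y_n)$ along $[o,y_n]$ (through a point of $[z,y_n]$ close to $\gamma(T_n)$) yields $|b_{y_n}(z)-b_{\gamma^+}(z)|\le \hat C(C)+o(1)$ for every fixed $z$. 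Therefore every accumulation point of $b_{y_n}$ is within $L^\infty$--distance $\hat C(C)$ of $b_{\gamma^+}$, hence lies in $[\gamma^+]$, so $y_n\to[\gamma^+]$. Running the same estimate on an approximating sequence of an arbitrary $\eta\in[\gamma^+]$ (whose projections must themselves be escaping, for otherwise $b_\eta(\gamma(t))-b_{\gamma^+}(\gamma(t))\to\infty$) shows $[\gamma^+]$ has $L^\infty$--diameter at most $\hat C(C)$; being cut out by a pointwise $L^\infty$ inequality, $[\gamma^+]$ is then closed in $\hU$. This establishes \textbf{(A)} with $\xi=\gamma^+$.

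For \textbf{(B)}: let $\{\gamma_n\}$ be an escaping sequence of $C$--contracting quasi-geodesics and $A_n=\Omega_o(\gamma_n)$. For $x\in A_n$ the geodesic $[o,x]$ meets $\gamma_n$ at a point at distance $\ge d(o,\gamma_n)$ from $o$, and the nearest-point projection to $\gamma_n$ is no closer, so $d(o,\pi_{\gamma_n}(x))\ge d(o,\gamma_n)\to\infty$ uniformly in $x\in A_n$. Setting $o_n'=\pi_{\gamma_n}(o)$, the geodesics $[o,o_n']$ have length $d(o,\gamma_n)\to\infty$, so after passing to a subsequence $[o,o_n']\to\gamma$, a geodesic ray from $o$ that is $\hat C(C)$--contracting as a limit of uniformly contracting quasi-geodesics. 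The concatenation lemma shows that for any $x\in A_n$ both $[o,x]$ and $[o,o_n']$ $\hat C(C)$--fellow-travel $\gamma_n$, hence each other, up to time $\approx d(o,\gamma_n)$; thus $[o,x]$ $\hat C(C)$--fellow-travels the limit ray $\gamma$ up to a time $\to\infty$, which forces $\pi_\gamma(x)$ to escape. Feeding this into the estimate from \textbf{(A)} applied to the fixed ray $\gamma$, every convergent sequence $x_n\in A_n$ has $b_{x_n}$ accumulating in $[\gamma^+]$, so \textbf{(B)} holds with $\xi=\gamma^+$.

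The main obstacle is the contracting geometry underlying \textbf{(A)} and \textbf{(B)}: one must check that the limit ray $\gamma$ produced by Arzela-Ascoli in \textbf{(B)} truly governs the whole family of cones $\Omega_o(\gamma_n)$, which hinges on all the fellow-traveling estimates — and the constant $\hat C(C)$ — being uniform across $n$; a secondary subtlety is the closedness of the class $[\gamma^+]$ in \textbf{(A)}, for which the uniform $L^\infty$--diameter bound above is the key input. By contrast, axiom \textbf{(C)}, and hence non-pinchedness of every boundary point, is soft and self-contained, as sketched.
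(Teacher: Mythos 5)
This theorem is quoted from \cite[Theorem 1.1]{YANG22}; the present paper gives no proof of it, so I can only judge your argument on its own terms. Your verification of axiom (C) is correct and clean, and your treatment of (A) --- including the observation that the class $[\gamma^+]$ of a $C$--contracting ray has $L^\infty$--diameter at most $\hat C(C)$ and is therefore closed --- is the standard contracting-geometry estimate and is sound.

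The gap is in axiom (B). You set $o_n'=\pi_{\gamma_n}(o)$ and assert that a subsequential limit $\gamma$ of the geodesics $[o,o_n']$ is ``$\hat C(C)$--contracting as a limit of uniformly contracting quasi-geodesics.'' But the segments $[o,o_n']$ are not among the uniformly contracting sets: the hypothesis only makes the \emph{targets} $\gamma_n$ contracting, and says nothing about the geometry of the region between $o$ and $\gamma_n$ that $[o,o_n']$ must cross (it could run through a fat flat region). Without contraction of $\gamma$, the final step --- ``feeding this into the estimate from (A) applied to the fixed ray $\gamma$'' --- collapses: escaping projections to a non-contracting ray carry no information about convergence of horofunctions (in $\mathbb{R}^2$ every escaping sequence in a half-plane has escaping projections to the bounding axis, yet the limiting horofunctions vary). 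A second, related problem is the claim that $[o,x]$ and $[o,o_n']$ fellow-travel each other up to time $\approx d(o,\gamma_n)$ because both end near $\gamma_n$: in a general proper geodesic space two geodesics issuing from $o$ with nearby endpoints can diverge linearly in between, so this is a CAT(0)/hyperbolic phenomenon you are implicitly importing. The correct route for (B) avoids the limit ray entirely: for $x\in\Omega_o(\gamma_n)$ one estimates $b_x(z)=d(z,x)-d(o,x)$ directly against $b_{o_n'}(z)$ using only the contraction of $\gamma_n$ (for $n$ large, both $[o,x]$ and $[z,x]$ enter $N_C(\gamma_n)$ within bounded distance of $\pi_{\gamma_n}(o)$, which is itself within $C$ of $\pi_{\gamma_n}(z)$, and exit near $\pi_{\gamma_n}(x)$), obtaining $|b_x(z)-b_{o_n'}(z)|\le \hat C(C)$ uniformly over $x\in A_n$ and $n\gg 0$. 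One then passes to a subsequence along which $b_{o_n'}\to b_\xi$ and concludes that every accumulation point of a sequence $x_n\in A_n$ lies within $L^\infty$--distance $\hat C$ of $b_\xi$, hence in $[\xi]$. This is the same mechanism as your (A), applied to the moving contracting targets $\gamma_n$ rather than to a fabricated limit ray.
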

As finite difference partition is finer than the sublinear difference partition, we see by definition that Theorem \ref{HorobdryConvergence} holds for sublinear difference partition as well.

\subsection{Regularly  contracting geodesic rays}

%%%%MovedherefromSec3

We begin by recalling the notion of a regularly contracting segment from \cite{GQR22}. A related  notion involving group actions will be given in Definition \ref{FreqBarrierDefn}. 

For any $\theta\in (0,1]$, if $\gamma$ is a geodesic, a \textit{$\theta$--segment} means   a connected and closed subsegment of $\gamma$ with length $\theta \ell(\gamma)$.
 
\begin{defn}\label{FreqContrDefn}
Fix $r,C>0$ and $L>0, \theta\in (0,1]$.
\begin{enumerate}

\item A geodesic segment $\gamma$  is \textit{$(r, C, L)$--contracting at $\theta$–frequency}     if  every $\theta$--segment of $\gamma$ contains a subsegment of length $L$ that is $r$--close to a  $C$–contracting geodesic. More precisely,
for any $0 < t < (1-\theta)\ell(\gamma)$ there is an interval 
of times $[s-L/2, s+L/2] \subset [t, t+ \theta \ell(\gamma)]$ and a $C$--contracting geodesic $p$ such that, 
\begin{equation}\label{rclosetopEQ}
t \in [s-L/2 , s+L/2]
\qquad\Longrightarrow \qquad
d(\gamma(t), p) \leq r.    
\end{equation}

\item A geodesic ray $\gamma$  is \textit{$(r, C, L)$--contracting at $\theta$–frequency} if there is an $R_0>0$ such that any initial segment of $\gamma$ with length at least $R_0$ (i.e. the segment $\gamma[0,t]$ for any $t\ge R_0$) is $(r, C, L)$--contracting at $\theta$–frequency.  

\item A geodesic ray $\gamma$ is \emph{$(r,C)$--regularly contracting} 
if $\gamma$ is $(r, C, L)$--contracting at $\theta$–frequency  for each $L>0$ and each $\theta\in (0,1]$.   Further, $\gamma$ is \textit{regularly contracting} if it is {$(r,C)$--regularly contracting} for some $r>0,C>0$.  
    
\end{enumerate}
\end{defn}

Let us recall the following result which is crucial in this study.
\begin{thm} \cite{GQR22}\label{GQRtheorem}
If a geodesic ray $\gamma$ is $(r,C)$--regularly contracting for some $r,C>0$, then it is $\kappa$--contracting for some sublinear function
$\kappa$ depending on $r,C$. In particular, it is also $\kappa$--Morse. 
\end{thm}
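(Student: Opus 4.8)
\textbf{Proof proposal for Theorem \ref{GQRtheorem}.}

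The plan is to verify the $\kappa$--contracting property directly from Definition~\ref{Def:kappa-Contracting}, extracting the sublinear function $\kappa$ from the ``eventually, on every $\theta$--segment'' structure of a regularly contracting ray. Fix such a $\tau$ with constants $r, C$. The first step is to normalize: by Lemma~\ref{BigThree}(2), a geodesic that is $r$--close to a $C$--contracting geodesic on a segment of length $L$ is itself $\hat C(C,r)$--contracting on (essentially) that segment after shrinking by $r$; so up to adjusting constants we may assume $\tau$ literally contains, inside every $\theta$--segment beyond radius $R_0(\theta,L)$, a genuine $C'$--contracting subsegment of length $L$ for $C' = C'(C,r)$, where $C'$ does not depend on $\theta$ or $L$.

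Next I would bound the projection $\proj_\tau(x,y)$ for points $x,y \in \U$ with $d(x,y) \le d(x,\tau)$. Let $p = \pi_\tau(x)$ and $p' = \pi_\tau(y)$, and set $\ell = d_\tau(p, p')$, which up to bounded error is the length of the subsegment $[p,p']_\tau$; we want $\ell \prec \kappa(x)$. Here is the mechanism: if $\ell$ is large compared to $L$ and $[p,p']_\tau$ lies far enough out along $\tau$ (at radius comparable to $\Norm{x}$, since $p$ is a nearest point to $x$ and $x$ is not much farther from $o$ than $p$), then by the frequency hypothesis at a suitable frequency $\theta = \theta(\ell)$ the subsegment $[p,p']_\tau$ contains a $C'$--contracting segment $q$ of length $L$. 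Now $x$ projects near $p$ and $y$ near $p'$, so the geodesic $[x,y]$ must pass within $C'$ of $q$ (otherwise the $C'$--contracting property of $q$ forces $\pi_q(x)$ and $\pi_q(y)$ to be $C'$--close, contradicting that $p$ and $p'$ are separated by the full length-$L$ block $q$ once $L \gg C'$). But $d(x,y) \le d(x,\tau) \le d(x,q) + L$, and $[x,y]$ meeting $N_{C'}(q)$ combined with the triangle inequality pins $d(x,y)$ from below by roughly $2\,d(x,q) - O(L) \ge 2\,d(x,\tau) - O(L)$, while $d(x,y) \le d(x,\tau)$; this is a contradiction once $d(x,\tau)$ is large relative to $L$. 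The upshot: either $d(x,\tau)$ is bounded (a trivial case absorbed into the constant $\cc_\tau$), or $\ell$ cannot be too large relative to where it sits on $\tau$.

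Turning this into a clean sublinear bound is the heart, and the main obstacle. The point is that the frequency parameter $\theta$ we are allowed to use degrades with $\ell$: a segment of length $\ell$ is guaranteed to contain a length-$L$ contracting block only if $\ell \ge L/\theta$, and the radius $R_0(\theta, L)$ past which the frequency statement kicks in grows as $\theta \to 0$. So for each $\ell$ we must choose $\theta = \theta(\ell)$ with $L/\theta(\ell) \le \ell$, forcing $\theta(\ell) \le L/\ell \to 0$, and then the contradiction above only applies once $\Norm{x}$ (hence the location of $[p,p']_\tau$ on $\tau$) exceeds $R_0(\theta(\ell), L)$. Inverting the monotone relationship $\ell \mapsto R_0(L/\ell, L)$ yields a function $\kappa_0$ with $\proj_\tau(x,y) = \ell \le \kappa_0(\Norm{x})$ for all $\Norm{x}$ large, and the content of ``regularly contracting'' — that the frequency statement holds for \emph{every} $\theta \in (0,1]$ and \emph{every} $L$ — is exactly what makes $\kappa_0(t)/t \to 0$, i.e. $\kappa_0$ sublinear: if $\kappa_0$ grew linearly we could fix a single small $\theta$ and $L$ and violate it. Finally, replace $\kappa_0$ by a monotone increasing concave sublinear majorant $\kappa \ge \kappa_0$ as in the Remark following \eqref{Eq:Concave}, absorb the bounded cases and the additive $O(L)$, $\hat C$ errors into the constant $\cc_\tau$ (using $\kappa \ge 1$), and conclude $\tau$ is $\kappa$--contracting. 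The ``$\kappa$--Morse'' clause is then immediate from Theorem~\cite{QRT2}(2) quoted above.
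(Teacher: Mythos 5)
First, a framing remark: this paper gives no proof of Theorem~\ref{GQRtheorem} --- it is imported verbatim from \cite{GQR22} --- so your proposal is effectively being compared with the source argument rather than with anything in this text. Your architecture is the right one and matches that argument in outline: normalize via Lemma~\ref{BigThree} so that $\tau$ genuinely contains $C'$--contracting blocks in every sufficiently deep $\theta$--segment, locate such a block $q$ inside $[p,p']_\tau$, force $[x,y]$ to meet a neighborhood of $q$, and extract sublinearity of $\kappa$ from the fact that Definition~\ref{FreqContrDefn} supplies \emph{every} frequency $\theta$ at the price of a threshold $R_0(\theta,L)$. That quantifier inversion is indeed where the sublinear function comes from, and you identify it correctly.

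There is, however, a genuine gap at the step that closes the contradiction. From a point $z\in[x,y]\cap N_{C'}(q)$ the triangle inequality gives $d(x,y)\ge d(x,q)+d(y,q)-2C'$, and you then assert a lower bound of roughly $2d(x,q)-O(L)$, which implicitly requires $d(y,q)\ge d(x,q)-O(L)$. Nothing in the hypothesis $d(x,y)\le d(x,\tau)$ provides this: $y$ may lie on $\tau$, even on $q$ itself, in which case $d(x,y)\approx d(x,q)$ and no contradiction arises however large $d(x,\tau)$ is. The repair is to take the block $q$ in the \emph{middle} of $[p,p']_\tau$, at distance $\gg C'$ from both $p$ and $p'$ (apply the frequency hypothesis to the middle third, say). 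Then $[x,p]$ stays $C'$--far from $q$ (a point of $[x,p]$ within $C'$ of $q\subseteq\tau$ is forced within $C'$ of $p$, so $d(p,q)<2C'$, excluded), whence $\pi_q(x)$ lies within $C'$ of $q^-$ and likewise $\pi_q(y)$ of $q^+$; the contracting property of $q$ then forces $[x,y]$ to meet $N_{C'}(q)$ at some $z$ once $L\gg C'$. Now $d(x,z)\ge d(x,q)-C'\ge d(x,\tau)-C'$ together with $d(x,y)\le d(x,\tau)$ gives $d(z,y)\le C'$, hence $d(p',q)\le 4C'$ --- contradicting that $q$ sits in the middle. Note that this version needs no largeness assumption on $d(x,\tau)$. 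A second, smaller slip: the condition for a subsegment of length $\ell$ of $\tau[0,T]$ to contain a $\theta$--segment is $\ell\ge\theta T$, not $\ell\ge L/\theta$; since $T\le 2\Norm{x}+\ell+O(1)$, this yields $\ell\prec\theta\Norm{x}$ whenever $T\ge R_0(\theta,L)$, and taking the infimum over $\theta$ produces the sublinear majorant exactly as you intend. With these two corrections your proof is sound, and the final passage to $\kappa$--Morse via the quoted theorem of \cite{QRT2} is fine.
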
 

We next make the connection with convergence boundary. Namely, regularly contracting geodesic rays accumulate into a $[\cdot]$-class in the convergence boundary.
\begin{lem}\label{RegCConverge}
Let $(\pU, [\cdot])$ be a convergence compactification of $X$. Given $\theta, r, C, L>0$, let $\gamma$ be an $(r, C, L)$--contracting geodesic ray  at $\theta$--frequency. Then for any  large $L>10(C+r)$, $\gamma$  accumulates into a $[\cdot]$--class denoted by $[\gamma^+]$  in $\pU$.
\end{lem}
\begin{proof}
By the definition of frequently contracting rays, there exists  a sequence of the $C$--contracting geodesic segments $p_n$  satisfying the property (\ref{rclosetopEQ}). By excising subsegments  from $p_n$ with length $L$, we may further assume that $p_n$ is an escaping sequence of $C$--contracting geodesic of length $L$. 

Denote  $\beta_n:=N_r(p_n)$. One verifies that $\beta_n$ is $(C+r)$-contracting, and by  (\ref{rclosetopEQ}), $\gamma$ intersects $\beta_n$ in a diameter at least $\len(p_n)=L$. So if $L>10(C+r)$, then applying  Definition \ref{ConvBdryDefn}(B) to the escaping sequence $\{\beta_n\}$, we thus obtain a  $[\cdot]$--class  in  $\pU$,  denoted by $[\gamma^+]$, and $\Omega_o(\beta_n)$ accumulates to $[\gamma^+]$. Noting that since $\gamma$ intersects each $\beta_n$  in diameter at least  $L$,   any sequence of points on $\gamma$ are eventually contained in $\Omega_o(\beta_n)$. Hence, $\gamma$ accumulates into $[\gamma^+]$ by Assumption (B).  The proof is complete.    
\end{proof}

Let us equip the space $\U$ of interest with a convergence boundary $(\pU,[\cdot])$ (see Examples \ref{ConvbdryExamples}). A universal choice is to consider the horofunction boundary endowed with finite or sublinear difference partition.

By Lemma \ref{RegCConverge}, for given $L\gg 10(r+ C)$, we denote by $\mathcal{FC}(\theta,r, C,L)$  the set of  all $[\cdot]$--classes   $[\xi]\subset \pU$ so that some geodesic ray $\gamma$ in $\U$ starting at $o$ and ending at $[\xi]$  is $(r,C,L)$--contracting at $\theta$--frequency. 

Further, let $\mathcal{RC}(r, C)$ denote the set of all $[\cdot]$--classes   $[\xi]\subset \pU$ so that some geodesic ray $\gamma$ in $\U$ starting at $o$ and ending at $[\xi]$  is $(r,C)$--regular contracting. 

The relation between these sets shall be crucial in further discussion. 
\begin{prop}\label{RCasCountableIntersection}
For any $r\ge C>0$, let $\hat r, L_0$ be the constants given below by Lemma \ref{RegContrRayClass}. Then the following holds
$$
\begin{aligned}\label{RCCountIntEQ}
\mathcal{RC}( r, C) \subseteq \bigcap_{L_0\le L\in \mathbb N}\left (\bigcap_{\theta\in (0,1]\cap \mathbb Q} \mathcal{FC}(\theta,r, C,L)\right)  \subseteq \mathcal{RC}(\hat r, C)   
\end{aligned}
$$
where $\mathbb Q$ denotes the set of rational numbers.
\end{prop}
The first inclusion is immediate and the reminder of this subsection is devoted to the second one. 
We first start with some elementary lemmas  used later on.

%However, the discussion in what follows equally works for any convergence boundary, e.g. in Examples \ref{ConvbdryExamples}. 

\begin{lem}\label{close barriers in two geodesics}
For any $r\ge C,M>0$ there exist $\hat r=\hat r(r,C,M), L_0=L_0(r,C,M)>0$ with the following property.

Assume that $\alpha$ and $\beta$ are two geodesics with $d(\alpha^-,\beta^-), d(\alpha^+,\beta^+)\le M$. Let $p$ be a $C$-contracting segment with length at least $L_0$. If  $d(p^-,\alpha),d(p^+,\alpha)\le r$, then $d(p^-,\beta),d(p^+,\beta)\le \hat r$.    
\end{lem}
\begin{proof}
Let $u,v\in \alpha$ be the entry and exit points of $\alpha$ in $N_r(p)$.  Since $d(p^-,\alpha),d(p^+,\alpha)\le r$, we have $d(u,v)\ge \len(p)-2r$. Take points $u',v'\in p$ with $d(u,u'),d(v,v')\le r$. This implies $\len(p)\ge d(u',v')\ge \len(p)-4r$, which yields $d(u',p_-)+ d(v',p_+)\le 4r$.

We first prove that $\beta$  intersects $N_r(p)$. 
As the shortest projection to  $p$ is a coarsely $1$--Lipschitz map by Lemma \ref{BigThree}(4), the projection of a ball with radius $\max\{r,M\}$  to $p$  has diameter at most $B$ depending on $r, M$ and $C$. Thus, $$\max\{\proj_{p}(\alpha^-,\beta^-),\;\proj_{p}(\alpha^+,\beta^+),\;\proj_{p}(u,u'),\;\proj_{p}(v,v')\}\le B.$$ Now, if $\beta$ was disjoint with  $N_r(p)$, we would project $\beta$ to $p$ with diameter at most $C$. Each projection of $[\alpha^-,u]$ and $[v,\alpha^-]$ to $p$ gets bounded by $C$, so we obtain 
$$\begin{aligned}
\len(p)&\le 4r+d(u',v')\\
&\le 4r+ \proj_{p}(\alpha^-,\beta^-)+\proj_{p}(\alpha^+,\beta^+)+\proj_{p}(u,u')+\proj_{p}(v,v')+\\
&\;\;+\mathrm{diam}(\pi_{p}(\beta))+\proj_{p}(\alpha^-,u)+\proj_{p}(v,\alpha^-)\le 4r+4B+3C    
\end{aligned}$$ Since $\len(p) \ge L_0$ by hypothesis, if  we choose $$L_0>4r+4B+3C$$ which gives a contradiction, then $\beta$ has to intersect $N_r(p)$. 

To conclude the proof, let $x$ and $y$ be  the entry and exit points of $\beta$ in $N_r(p)$, so $d(x,p),d(y,p)\le r$ and $d(x,y)\le \len(p)+2r$. Arguing similarly as above we project $[\alpha^-,\beta^-]$, $[\alpha^-,u]$ and $[\beta^-,x]$ to $p$, which yields $d(x,u)\le 2B+2C=:D$. By symmetry, we also have $d(y,v)\le D$. 

Recalling  $d(u,v)\ge \len(p)-2r$, we see  $d(x,y)\ge d(u,v)-2D\ge \len(p)-2D-2r$. This implies that $d(x,p^-),d(y,p^+)\le 2D+4r$. Setting $\hat r=2D+4r$ concludes the proof of the lemma.   
\end{proof}

Before stating the next result, we briefly explain its motivation.
Note that the space $X$ is not necessarily uniquely geodesic, and the $[\cdot]$-equivalence classes under consideration may therefore be non-singleton. Consequently, a boundary class need not admit a canonical geodesic ray representative, and the parameters associated with frequent contraction may vary among different representatives of the same $[\cdot]$-class. The result below provides effective control of these parameters when passing between such geodesic rays.

\begin{lem}\label{RegContrRayClass}
Given constants $r\ge C>0$, there exist $\hat r=\hat r(r,C), L_0=L_0(r,C)$ with the following property.

Given $L\ge L_0$ and $\theta\in (0,1]$, let $\gamma$  be an $(r, C, L)$--contracting  geodesic ray  at $\theta$–frequency starting at a point $o\in \U$. Assume that the terminal $[\cdot]$-class $[\gamma^+]$ of  $\gamma$ is non-pinched. Then any geodesic ray from $o$ ending at $[\gamma^+]$ in $\pU$ is $(\hat r, C, L)$--contracting   at $\theta$–frequency.
\end{lem}
\begin{proof}
Throughout the proof, all constants depend only on $r$ and $C$. 
By assumption, let  $p_n$ be a sequence of the $C$--contracting geodesic segments  for $\gamma$ witnessing the property (\ref{rclosetopEQ}). Excising subsegments from $p_n$ with length $L$ if necessary, we may assume that $\{p_n: n\ge 1\}$ is    escaping and each $p_n$ has length  $L$. 

Let $x_n,y_n\in \gamma$ be the entry and exit points of $\gamma$ in $N_r(p_n)$, so the endpoints of $p_n$ being contained in $N_r(\gamma)$ yields $$d(x_n,y_n)\ge \ell(p_n)-2r\ge L-2r$$ Let $x_n', y_n'\in p_n$ so that $d(x_n,x_n'),d(y_n,y_n')\le r$. The first paragraph in the proof of Lemma \ref{close barriers in two geodesics} proves $d(x_n',p_n^-),d(y_n',p_n^+)\le 4r$. That is, the projection of  $x_n$ (resp. $y_n$) to $p_n$ lies in the $4r$-neighborhood of $p_n^-$ (resp. $p_n^+$). By the $C$-contracting property, $[o,x_n]_\gamma$ projects to $p_n$  as a subset with diameter at most $C$, which thus is  $(C+4r)$-close to $p_n^-$. Similarly, if $u$ is a point  on $\gamma$ with $d(o,u)\ge d(o,y_n)$, the projection of $[y_n,u]_\gamma$ to $p_n$ is $(C+4r)$-close to $p_n^+$. 

\begin{figure}
    \centering

\tikzset{every picture/.style={line width=0.75pt}} %set default line width to 0.75pt        

\begin{tikzpicture}[x=0.75pt,y=0.75pt,yscale=-1,xscale=1]
%uncomment if require: \path (0,300); %set diagram left start at 0, and has height of 300

%Curve Lines [id:da45971128384035875] 
\draw    (100,121) .. controls (139.6,80.41) and (355.62,76.08) .. (438.53,80.85) ;
\draw [shift={(441,81)}, rotate = 183.53] [fill={rgb, 255:red, 0; green, 0; blue, 0 }  ][line width=0.08]  [draw opacity=0] (8.93,-4.29) -- (0,0) -- (8.93,4.29) -- cycle    ;
%Curve Lines [id:da9983642169841976] 
\draw    (100,121) .. controls (150.49,137.83) and (379.36,137.02) .. (442.15,109.83) ;
\draw [shift={(444,109)}, rotate = 154.98] [fill={rgb, 255:red, 0; green, 0; blue, 0 }  ][line width=0.08]  [draw opacity=0] (8.93,-4.29) -- (0,0) -- (8.93,4.29) -- cycle    ;
\draw [shift={(100,121)}, rotate = 18.43] [color={rgb, 255:red, 0; green, 0; blue, 0 }  ][fill={rgb, 255:red, 0; green, 0; blue, 0 }  ][line width=0.75]      (0, 0) circle [x radius= 3.35, y radius= 3.35]   ;
%Straight Lines [id:da313503894956524] 
\draw    (378,80) -- (382,123) ;
\draw [shift={(382,123)}, rotate = 84.69] [color={rgb, 255:red, 0; green, 0; blue, 0 }  ][fill={rgb, 255:red, 0; green, 0; blue, 0 }  ][line width=0.75]      (0, 0) circle [x radius= 3.35, y radius= 3.35]   ;
\draw [shift={(378,80)}, rotate = 84.69] [color={rgb, 255:red, 0; green, 0; blue, 0 }  ][fill={rgb, 255:red, 0; green, 0; blue, 0 }  ][line width=0.75]      (0, 0) circle [x radius= 3.35, y radius= 3.35]   ;
%Straight Lines [id:da9632229487773224] 
\draw [line width=1.5]  [dash pattern={on 5.63pt off 4.5pt}]  (196.98,110.34) -- (248.98,108.34) ;
\draw [shift={(248.98,108.34)}, rotate = 357.8] [color={rgb, 255:red, 0; green, 0; blue, 0 }  ][fill={rgb, 255:red, 0; green, 0; blue, 0 }  ][line width=1.5]      (0, 0) circle [x radius= 4.36, y radius= 4.36]   ;
\draw [shift={(196.98,110.34)}, rotate = 357.8] [color={rgb, 255:red, 0; green, 0; blue, 0 }  ][fill={rgb, 255:red, 0; green, 0; blue, 0 }  ][line width=1.5]      (0, 0) circle [x radius= 4.36, y radius= 4.36]   ;
%Rounded Rect [id:dp09550547549508426] 
\draw  [dash pattern={on 4.5pt off 4.5pt}] (187.92,93.61) .. controls (187.69,87.34) and (192.58,82.07) .. (198.84,81.84) -- (245,80.12) .. controls (251.26,79.89) and (256.53,84.78) .. (256.76,91.05) -- (258.03,125.08) .. controls (258.26,131.35) and (253.37,136.61) .. (247.11,136.85) -- (200.95,138.56) .. controls (194.69,138.8) and (189.42,133.91) .. (189.19,127.64) -- cycle ;
%Shape: Free Drawing [id:dp8979909408486332] 
\draw  [line width=6] [line join = round][line cap = round] (187.8,91.22) .. controls (187.8,91.22) and (187.8,91.22) .. (187.8,91.22) ;
%Shape: Free Drawing [id:dp319501758680789] 
\draw  [line width=6] [line join = round][line cap = round] (254.8,83.22) .. controls (254.8,83.22) and (254.8,83.22) .. (254.8,83.22) ;
%Curve Lines [id:da5860475382817117] 
\draw    (126.1,106.29) .. controls (147.37,96.29) and (184.97,90.02) .. (222.58,86.19) .. controls (265.6,81.8) and (308.63,80.59) .. (327,80.5)(127.38,109) .. controls (148.43,99.11) and (185.66,92.96) .. (222.88,89.17) .. controls (265.79,84.8) and (308.7,83.59) .. (327.01,83.5) ;
\draw [shift={(329,82)}, rotate = 0] [color={rgb, 255:red, 0; green, 0; blue, 0 }  ][line width=0.75]      (0, 0) circle [x radius= 3.35, y radius= 3.35]   ;
\draw [shift={(124,109)}, rotate = 332.4] [color={rgb, 255:red, 0; green, 0; blue, 0 }  ][line width=0.75]      (0, 0) circle [x radius= 3.35, y radius= 3.35]   ;
%Curve Lines [id:da044391634081175746] 
\draw    (131.69,123.82) .. controls (155.32,126.66) and (188.22,128.27) .. (221.12,128.9) .. controls (231.86,129.1) and (242.6,129.2) .. (253.02,129.2) .. controls (284.37,129.2) and (312.81,128.31) .. (329.53,126.74)(131.33,126.8) .. controls (155.04,129.64) and (188.05,131.27) .. (221.06,131.9) .. controls (231.82,132.1) and (242.58,132.2) .. (253.02,132.2) .. controls (284.49,132.2) and (313.03,131.31) .. (329.81,129.73) ;
\draw [shift={(332,128)}, rotate = 353.99] [color={rgb, 255:red, 0; green, 0; blue, 0 }  ][line width=0.75]      (0, 0) circle [x radius= 3.35, y radius= 3.35]   ;
\draw [shift={(129,125)}, rotate = 7.25] [color={rgb, 255:red, 0; green, 0; blue, 0 }  ][line width=0.75]      (0, 0) circle [x radius= 3.35, y radius= 3.35]   ;

% Text Node
\draw (288,63.4) node [anchor=north west][inner sep=0.75pt]    {$\beta _{2} \subset \gamma $};
% Text Node
\draw (292,134.4) node [anchor=north west][inner sep=0.75pt]    {$\beta _{1} \subset \alpha $};
% Text Node
\draw (370,64.4) node [anchor=north west][inner sep=0.75pt]    {$u$};
% Text Node
\draw (379,128.4) node [anchor=north west][inner sep=0.75pt]    {$v$};
% Text Node
\draw (82,115.4) node [anchor=north west][inner sep=0.75pt]    {$o$};
% Text Node
\draw (217,110.4) node [anchor=north west][inner sep=0.75pt]    {$p_{n}$};
% Text Node
\draw (166,74.4) node [anchor=north west][inner sep=0.75pt]    {$x_{n}$};
% Text Node
\draw (255,64.4) node [anchor=north west][inner sep=0.75pt]    {$y_{n}$};
% Text Node
\draw (202.95,141.96) node [anchor=north west][inner sep=0.75pt]    {$N_{r}( p_{n})$};
% Text Node
\draw (436,83.4) node [anchor=north west][inner sep=0.75pt]    {$\left[ \gamma ^{+}\right] =\left[ \alpha ^{+}\right]$};
% Text Node
\draw (168,101.4) node [anchor=north west][inner sep=0.75pt]    {$p_{n}^{-}$};
% Text Node
\draw (261,96.4) node [anchor=north west][inner sep=0.75pt]    {$p_{n}^{+}$};

\end{tikzpicture}
    \caption{The proof of Lemma \ref{RegContrRayClass}. The dotted area indicates the $r$-neighborhood of $p_n$, and the double lines $\beta_1$ and $\beta_2$ are $\theta$-segments of $[o,u]_\gamma$ and $[o,v]_\alpha$ accordingly.}
    \label{fig:placeholder}
\end{figure} 
Towards the proof, let $\alpha$ be any geodesic ray from $o$ and ending at $[\gamma^+]$, which is a non-pinched class. % We first claim 
\begin{claim}
For each $n\ge 1$, $\alpha$ intersects $N_{C}(p_n)$, and $d(p_n^-,\alpha)\le  M:=3C+4r$. 
\end{claim}
\begin{proof}[Proof of the claim]
Let us fix $p_n$ for some $n\ge 1$, and take  two convergent sequences of points  $u_m\in \gamma$ and $v_m\in \alpha$ so that $\{u_m\}$ and $\{v_m\}$ both tend to  $[\gamma^+]$. Since $[\gamma^+]$ is non-pinched,   the assumption (C) in the convergence boundary implies the sequence of segments $[u_m,v_m]$ is escaping. So for large $m\gg 1$, by the $C$-contracting property of $p_n$, $[u_m,v_m]$ projects to a subset on $p_n$ with diameter at most $C$. 

Let us assume to the contrary that $\alpha$ is disjoint with $N_{C}(p_n)$. By the $C$-contracting property, $\alpha$   projects to a subset of $p_n$ with diameter at most $C$.   According to the first paragraph, we see $$
\begin{aligned}
\ell(p_n) &\le \mathrm{diam}(\pi_{p_n}([o,x_n]_\gamma))+\mathrm{diam}(\pi_{p_n}([y_n,u_m]_\gamma))\\
&\;\;+\proj_{p_n}(u_m,v_m)+\mathrm{diam}(\pi_{p_n}(\alpha))  \\
&\le 2(C+4r)+C+C=4(C+2r)    
\end{aligned}$$  Choosing $L_0>4(C+2r)$  contradicts the assumption that $\ell(p_n)\ge L\ge L_0$. Hence, $\alpha$ intersects $N_{C}(p_n)$ for each $n\ge 1$. 

By a similar projection argument and  looking at the entry point of $\alpha$ in $N_{C}(p_n)$, we  derive $d(p_n^-,\alpha)\le (C+4r)+ 2C\le 3C+4r$.    
\end{proof}
By the claim, we then have $d(p_n^-,\alpha),d(p_n^-,\gamma)\le M$. Since $\{p_n:n\ge 1\}$ is an {escaping} sequence, $\alpha$ is $2M$-close to  $\gamma$ at an unbounded sequence of points. Let $\hat r=\hat r(r,2M,C)$  and $L_0=L_0(r,2M,C)$ further  satisfy the conclusion of Lemma \ref{close barriers in two geodesics}.  % which we shall apply to the initial subpaths of $\alpha$ and $\gamma$ with $2M$-close terminating endpoints.

To finish the proof, it is sufficient to verify that $\alpha$ is $(\hat r, C, L)$--contracting   at $\theta$–frequency. 
Namely, given any $v\in \alpha$, let $\beta_1$ be a   $\theta$-segment of $[o,u]_\alpha$. We need show that some segment of length $L$  in $\beta_1$ is $r$-close to a $C$-contracting segment. 

Indeed, let $u\in \gamma$  so that $d(o,u)=d(o,v)$.    Let $\beta_2$ be the $\theta$-segment of $[o,u]_\gamma$ with the same location as $\beta_1$ in $[o,v]_\alpha$; that is $d(o,\beta_1^-)=d(o,\beta_2^-)$ and $\ell(\beta_1)=\ell(\beta_2)$.  Since $\gamma$ is $( r, C, L)$--contracting   at $\theta$–frequency, there is a $C$-contracting segment $q$ that is $r$-close to  a segment with length $L$ of $\beta_2$. By the above discussion, since $\alpha$ is $2M$-close to  $\gamma$ at an unbounded sequence of points,  $[o,u]_\gamma$ is contained in a segment $[o,u']$ of $\gamma$ so that $u'$ is $2M$-close to $v'\in \alpha$.    

We are now ready to apply  Lemma \ref{close barriers in two geodesics} to the pair of $[o,u']$ and $[o,v']$. Since $q$ is $r$-close to a segment with length $L$ in a subsegment $\beta_2$ of  $[o,u]_\gamma$, we see that  $q^-,q^+$ are in the $\hat r$-neighborhood of $[o,v']$. Recall that $\beta_1$ in $[o,v]_\alpha$ has the same location as $\beta_2$ in $[o,u]_\gamma$. Combined with the fact that the two endpoints of $q$ are contained in $N_{\hat r}([o,u]_\gamma)$ and $N_{\hat r}([o,v]_\alpha)$, we derive that $d(q^-,\beta_1),d(q^+,\beta_1)\le \hat r$ up to increasing a multiple of $\hat r$. Therefore,  $\alpha$ is $(\hat r, C, L)$--contracting   at $\theta$–frequency, concluding the proof.
\end{proof}

\begin{comment}
\begin{defn}
Fix a $N$--contracting geodesic $\gamma^0$.  If for some $t$, $\tau(t-L,t+L)$ is $C$--close to some 
translate of $\gamma^0$, we say (in analogy with Teichm\"uller space) $\tau(t)$ is in the thick part of $\tau$.  

Define 
\[
\thick_\tau(T) = \Big|  \big\{t\in [0,T]: \text{$\tau(t-L,t+L)$ is $C$--close  }
\text{to a translate of $\gamma^0$}\big\} \Big|.
\]

Here $| \param |$ represents the Lebesgue measure of a subset $\RR$. 
That is, $\thick_\tau(T)$ is the amount of time $\tau[0,T]$ spends in the thick part. 
We now give a sufficient condition for a geodesic ray to be regularly contracting. 
\end{defn}
\begin{lem} \label{conditiontobefrequentlycontracting} 
Let $\tau \from [0, \infty) \to X$ be a geodesic ray. Suppose there are constants $N, C>0$ such that for each 
$L>0$  there is a $m>0$ where 
\[
\lim_{T \to \infty}\frac{thick_\tau(T)}{T} =m.
\] 
Then $\tau$ is regularly contracting.
\end{lem}
\end{comment}

We are ready to complete the proof of Proposition \ref{RCasCountableIntersection}.

\begin{proof}[Proof of Proposition \ref{RCasCountableIntersection}]
By definition, $\mathcal{RC}(r, C)$ is clearly a subset of the set in the middle term of the above equation.  For the second inclusion, let $[\xi]$ be a $[\cdot]$-class in the middle set. That is, for any $L>L_0 $ and $ \theta\in (0,1]\cap \mathbb Q$, there exists a geodesic ray denoted as $\gamma_{L,\theta}$ depending on $L,\theta$ and ending at $[\xi]$ so that $\gamma_{L,\theta}$ is $(r,C,L)$-contracting at frequency $\theta$. Let us fix a geodesic ray $\gamma$ ending at $[\xi]$. By Lemma \ref{RegContrRayClass}, since  $\gamma_{L,\theta}$ is $(r,C,L)$-contracting at frequency $\theta$,   $\gamma$ is $(\hat r,C,L)$-contracting at frequency $\theta$.  Hence, $\gamma$ is a $(\hat r,C)$-contracting ray, so $[\xi]$ is contained in $\mathcal{RC}(\hat r, C)$.
\end{proof}

\subsection{Statistically convex-cocompact actions}

In this subsection, we first recall a class of statistically convex-cocompact actions introduced in \cite{YANG10}. 

Let $X$ be a proper geodesic metric space on which a non-elementary (i.e. not virtually cyclic) group $G$  acts properly by isometry.
Consider the ball of radius $n$ centered at $o\in \U$: 
\begin{equation}\label{BallEQ}
N(o, n)=\{g\in G: d(o, go)\le n\}
\end{equation}    
Fix $\Delta\ge 1$. Consider the annulus of radius $n$ centered at $o\in \U$: 
\begin{equation}\label{AnnulusEQ}
A(o, n, \Delta)=\{g\in G: |d(o, go)-n|\le\Delta\}
\end{equation}
The \emph{critical exponent} of a subset $\Gamma$ of $G$ is defined as
\[
\e \Gamma =\limsup_{n \to \infty} \frac{ \log {\sharp (N(o, n) \cap \Gamma)}}{n}.
\]
By Lemma \ref{lem:manycontractingelements}, $G$ contains two independent loxodromic elements. By a standard ping-pong argument, one deduces that $G$ contains a non-abelian free group. Thus, it follows that $\e G>0$.
We say that the proper action $G\act X$ has \textit{purely exponential growth} if $\e G<\infty$ and $\sharp N(o,n)\asymp \mathrm{e}^{n\e G}$ for any $n\ge 1$.

\begin{defn}
A subset $A$ of the group $G$ is called \textit{negligible} if 
$$\begin{aligned}
\frac{\sharp A\cap  N(o,n)}{\sharp N(o,n)} \to 0, \text{ as } n\to\infty.   
\end{aligned}$$
It is said to be \textit{exponentially negligible} if there exists $\lambda\in (0,1)$ and $c>0$ so that 
$$\begin{aligned}
\forall n\ge 1,\;\;\; \frac{\sharp A\cap  N(o,n)}{\sharp N(o,n)} \le c\lambda^n.   
\end{aligned}$$
\end{defn}
\begin{rem}
If the proper action $G\act X$ has {purely exponential growth}, then the exponential negligibility of $A$ is equivalent to the strict inequality $\e A<\e G$. Such sets are sometimes called growth tight in literature.    
\end{rem}

Given constants $0\leq M_1\leq M_2$, let $\mathcal{O}_{M_1,M_2}$ be the set of elements $g\in G$ such that there exists some geodesic $\gamma$ between $N_{M_2}(o)$ and $N_{M_2}(go)$ with the property that the interior of $\gamma$ lies outside $N_{M_1}(G o)$.
\begin{figure}[h]
    \centering

\tikzset{every picture/.style={line width=0.75pt}} %set default line width to 0.75pt        

\begin{tikzpicture}[x=0.75pt,y=0.75pt,yscale=-1,xscale=1]
%uncomment if require: \path (0,300); %set diagram left start at 0, and has height of 300

%Straight Lines [id:da9103349894018704] 
\draw    (133.5,87) -- (357.5,102.86) ;
\draw [shift={(359.5,103)}, rotate = 184.05] [color={rgb, 255:red, 0; green, 0; blue, 0 }  ][line width=0.75]    (10.93,-3.29) .. controls (6.95,-1.4) and (3.31,-0.3) .. (0,0) .. controls (3.31,0.3) and (6.95,1.4) .. (10.93,3.29)   ;
\draw [shift={(133.5,87)}, rotate = 4.05] [color={rgb, 255:red, 0; green, 0; blue, 0 }  ][fill={rgb, 255:red, 0; green, 0; blue, 0 }  ][line width=0.75]      (0, 0) circle [x radius= 3.35, y radius= 3.35]   ;
%Shape: Polygon Curved [id:ds5734451974219403] 
\draw   (73.5,96) .. controls (90.5,55) and (206.5,165) .. (281.5,156) .. controls (356.5,147) and (390.5,77) .. (418.5,120) .. controls (446.5,163) and (339,198) .. (248,193) .. controls (157,188) and (56.5,137) .. (73.5,96) -- cycle ;
%Straight Lines [id:da8866770967984667] 
\draw    (400.5,120) -- (367.24,105.22) ;
\draw [shift={(364.5,104)}, rotate = 23.96] [fill={rgb, 255:red, 0; green, 0; blue, 0 }  ][line width=0.08]  [draw opacity=0] (8.93,-4.29) -- (0,0) -- (8.93,4.29) -- cycle    ;
\draw [shift={(400.5,120)}, rotate = 203.96] [color={rgb, 255:red, 0; green, 0; blue, 0 }  ][fill={rgb, 255:red, 0; green, 0; blue, 0 }  ][line width=0.75]      (0, 0) circle [x radius= 3.35, y radius= 3.35]   ;
%Shape: Circle [id:dp12781576604820177] 
\draw   (60.5,105.5) .. controls (60.5,84.51) and (77.51,67.5) .. (98.5,67.5) .. controls (119.49,67.5) and (136.5,84.51) .. (136.5,105.5) .. controls (136.5,126.49) and (119.49,143.5) .. (98.5,143.5) .. controls (77.51,143.5) and (60.5,126.49) .. (60.5,105.5) -- cycle ;
%Shape: Circle [id:dp07875274444218738] 
\draw   (358,121.5) .. controls (358,100.79) and (374.79,84) .. (395.5,84) .. controls (416.21,84) and (433,100.79) .. (433,121.5) .. controls (433,142.21) and (416.21,159) .. (395.5,159) .. controls (374.79,159) and (358,142.21) .. (358,121.5) -- cycle ;
%Straight Lines [id:da19768704594178232] 
\draw    (98.5,105.5) -- (130.85,88.4) ;
\draw [shift={(133.5,87)}, rotate = 152.14] [fill={rgb, 255:red, 0; green, 0; blue, 0 }  ][line width=0.08]  [draw opacity=0] (8.93,-4.29) -- (0,0) -- (8.93,4.29) -- cycle    ;
\draw [shift={(98.5,105.5)}, rotate = 332.14] [color={rgb, 255:red, 0; green, 0; blue, 0 }  ][fill={rgb, 255:red, 0; green, 0; blue, 0 }  ][line width=0.75]      (0, 0) circle [x radius= 3.35, y radius= 3.35]   ;
%Shape: Free Drawing [id:dp25928677887065543] 
\draw  [line width=6] [line join = round][line cap = round] (363.5,103) .. controls (363.5,103) and (363.5,103) .. (363.5,103) ;

% Text Node
\draw (233,162.4) node [anchor=north west][inner sep=0.75pt]    {$N_{M_{1}}( Go)$};
% Text Node
\draw (133,57.4) node [anchor=north west][inner sep=0.75pt]    {$B( o,M_{2})$};
% Text Node
\draw (81,98.4) node [anchor=north west][inner sep=0.75pt]    {$o$};
% Text Node
\draw (375,60.4) node [anchor=north west][inner sep=0.75pt]    {$B( go,M_{2})$};
% Text Node
\draw (397.5,124.9) node [anchor=north west][inner sep=0.75pt]    {$go$};
% Text Node
\draw (193,98.4) node [anchor=north west][inner sep=0.75pt]    {$\gamma \cap N_{M_{1}}( Go) =\emptyset $};

\end{tikzpicture}
    \caption{Illustration of $\mathcal O_{M_1,M_2}$.}
    \label{fig:enter-label}
\end{figure}

\begin{defn}[SCC action]\label{SCCDefn}
If there exist positive constants $M_1,M_2>0$ such that $\e {\mathcal{O}_{M_1,M_2}}<\e G<\infty$, then the proper action of $G$ on $\U$ is called \textit{statistically convex-cocompact} (SCC).
\end{defn}
\begin{rem}
The  definition of  $\mathcal{O}_{M_1,M_2}$ is motivated by considering  the action of  the fundamental group of a finite volume negatively curved Hadamard manifold on its universal cover. It is then easy to see that for appropriate constants $M_1, M_2>0$, the set $\mathcal{O}_{M_1,M_2}$ coincides with  the union of the orbits of cusp subgroups up to a  finite Hausdorff distance. The assumption in SCC actions was called the \textit{parabolic gap condition} by Dal'bo, Otal and Peign\'{e} in \cite{DOP}.  
\end{rem}
We have chosen two parameters $M_1,M_2$ so that the definition of a statistically convex-cocompact action  is flexible and easy to verify. It is enough to take $M_1=M_2=M$ in our use. Henceforth, we set $\mathcal{O}_M:=\mathcal{O}_{M,M}$ for ease of notation.

When the SCC action contains a contracting element, the definition is independent of the basepoint (see \cite[Lemma 6.2]{YANG10}).

%\subsection{Barrier-free elements and growth tightness}
A key notion in studying growth gaps is that of a barrier-free element introduced in \cite{YANG10}.

\begin{defn}\label{BarrierDef}
Fix constants $r,M>0$.
\begin{enumerate}
\item
Given $r>0$ and $f\in G$, we say that a geodesic $\gamma$ contains an $(r,f)$--\textit{barrier} if there exists an element $t\in G$ so that
    $$\begin{aligned} \label{eq:barrier}
    \max\{d(t\cdot o,\gamma),d(t\cdot fo,\gamma)\}\leq r.
    \end{aligned}$$
    By abusing language, we also say that $to$ is an $(r,f)$--{barrier} of $\gamma$.
    If no such $t\in G$ exists so that the above inequalities hold, then $\gamma$ is called $(r,f)$--\textit{barrier-free}. 
    Further, we say that $\gamma$ is  $(r,F)$--\textit{barrier-free} for a subset $F\subset G$ if $\gamma$ is  $(r,f)$--\textit{barrier-free} for some $f\in F$.

\item
An element $g\in G$ is $(r,M,f)$--\textit{barrier-free} if there exists an $(r,f)$--barrier-free geodesic between $N_M(o)$ and $N_M(go)$. Similarly, we define $(r,M,F)$--\textit{barrier-free} elements for a subset $F\subset G$.
\end{enumerate}
\end{defn}

Given $r,M>0$ and  any $f\in G$, let $\mathcal{V}_{r,M,f}$ be the set of all $(r,M,f)$--barrier-free elements in $G$. The following results will be crucially used in the next sections. %Let $M_0$ be the constant in the definition of SCC action.

\begin{prop}\label{pro:growthtight}\cite[Theorems B \& C]{YANG10}
Suppose that a non-elementary group $G$ admits a proper action on a proper geodesic space $(\U,d)$ with a contracting element.   Then for any large enough $M\gg 0$, there exists $r=r(M)>0$ with the following property.
\begin{enumerate}
\item  
If $G$ has purely exponential growth,  then $\mathcal{V}_{r,M,f}$ is  {negligible} for any $f\in G$.

\item If the action is SCC then $G$ has purely exponential growth. Further,  $\mathcal{V}_{r,M,f}$ is exponentially negligible for any $f\in G$.

\end{enumerate}
\end{prop}

\section{Conformal density on  convergence boundary}\label{SecDensity}

Let $G\curvearrowright \U$ be a proper isometric action of a non-elementary group on a proper geodesic metric space $\U$, and assume the action admits a contracting element. Fix a convergence compactification $(\pU, [\cdot])$ of $\U$. This section recalls the general setup of quasi-conformal densities supported on $\pU$, and reviews Patterson’s construction of such densities from the action $G\curvearrowright \U$.

\subsection{Patterson-Sullivan measures on convergence boundary}  
%\begin{rem}
%On account of Lemma \ref{ClosedFinDiff}, the reader may wish to drop this assumption by just considering to conical points. If the action $G\act \U$ is of divergent type and it is known a priories, then we are no loss. In general, it is well-known fact that a convergent  action gives no charges to conical points.  
%\end{rem}
 
Let $ \mathcal M^+(\bU)$ be the set of finite positive Borel measures on $\bU:=\pU\cup\U$, on which  $G$ acts  by push-forward: $$g_\star\mu(A)=\mu(g^{-1}A)$$ 
for any Borel set $A$. Let $\mathcal{C}$ be the (non-empty) set of non-pinched points in $\pU$ in Definition \ref{ConvBdryDefn}(C).
%Thus, .
\begin{defn}\label{ConformalDensityDefn}

Let $\omega \in [0, \infty)$. The following  map 
$$\begin{aligned}
\mu: \U \longrightarrow &\mathcal M^+(\bU) \\
x \longmapsto &\mu_x    
\end{aligned}$$
is called a 
\textit{$\omega$--dimensional $G$--quasi-equivariant quasi-conformal density}  if for any $g\in G$ and any $x\in \U$, we have
\begin{align}
\label{almostInv}
\mu_x-\textrm{ a.e. } \xi\in \pU: &\quad
\frac{dg_\star\mu_{x}}{d\mu_{x}}(\xi) \in [\frac{1}{\lambda}, \lambda],\\
\label{confDeriv}
\mu_y-\textrm{ a.e. } \xi\in \mathcal C: &\quad
\frac{1}{\lambda} e^{-\omega B_\xi (x, y)}  \le  \frac{d\mu_{x}}{d\mu_{y}}(\xi) \le \lambda e^{-\omega B_\xi (x, y)}
\end{align}
for a  universal constant $\lambda\ge 1$.  We normalize $\mu_o$ to be a probability measure: its mass $\|\mu_o\|=\mu_o(\bU)=1$.
%We say that $\{\mu_x: x\in \U\}$ is \textit{non-trivial} if $\mu_x$ is supported on $\mathcal C$.
\end{defn}

If $\lambda=1$   for (\ref{almostInv}), the map $\mu: \U \to \mathcal M^+(\bU)$ is $G$--equivariant, that is, $\mu_{gx} = g_{\star}\mu_x$; equivalently, $\mu_{gx}(gA)=\mu_x(A)$. If both inequalities hold with $\lambda=1$, we call $\mu$ a conformal density. 
\newline
\paragraph{\textbf{Patterson-Sullivan measures}}
Choose a basepoint $o \in \U$.  
The \textit{Poincar\'e series} for the proper  action  $G \act \U$ defined as follows
$$
\p_G(s, x,y) =\sum\limits_{g \in G} e^{-sd(x, gy)}, \; s \ge 0
$$
diverges at $0\le s< \e G$ and converges at $s>\e G$. The   action of $G$ on $\U$ is called \textit{divergent} if $\p_{G}(s,x,y)$ diverges at the critical exponent $\e G$. Otherwise, $G$ is called \textit{convergent}. By Proposition \ref{pro:growthtight} the SCC actions have purely exponential growth; which in particular imply they are  of divergent type.

Recall that $[\Lambda (G o)]$ is the limit set for the  action $G\act \U$, i.e. the set of accumulation points of a $G$-orbit, up to taking $[\cdot]$-saturation.

We start to construct a family of measures $\{\mu_x^{s,y}\}_{x \in \U}$ supported on $Gy$ for any given $s >\e G$ and $y\in \U$. Assume that $\p_G(s, x,y)$ is divergent at $s=\e G$. Set
\begin{equation}\label{PattersonEQ}
\mu_{x}^{s, y} = \frac{1}{\p_G(s, o, y)} \sum\limits_{g \in G} e^{-sd(x, gy)} \cdot \dirac{gy},
\end{equation}
where $s >\e G$ and $x \in \U$. Note that $\mu^{s, y}_o$ is a probability
measure supported on $Gy$. 
If $\p_G(s, x,y)$ is convergent at $s=\e G$, the Poincar\'e series in (\ref{PattersonEQ}) needs to be replaced by a modified series as in \cite{Patt}.

Fix $y\in \U$.
Choose $s_i \to \e G$ such that $\mu_x^{s_i, y}$ are convergent (in weak-* topology) in
$\mathcal M^+(\pG o)$.  The \textit{Patterson-Sullivan measures} $\mu_x^y = \lim \mu_x^{s_i,y}$ are the weak-* limit 
measures. Viewing $\mu_x^{s_i,y}$ as measures on the compactification $\bU$, any weak-* limit is supported on $\pU$. Note that $\mu_o(\pG o) = 1$. In what follows, we usually write $\mu_x=\mu_x^o$ for $x\in \U$.

\begin{thm}\label{ConformalDensityExists}
Suppose that a non-elementary $G$ acts properly on a proper geodesic space  $\U$ compactified with horofunction boundary $\hU$. Then the family $\{\mu_x:x\in\U\}$  of Patterson-Sullivan measures is  a $\e G$-dimensional $G$-equivariant conformal density supported on $[\Lambda(Go)]$.     
\end{thm}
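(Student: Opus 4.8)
The plan is to follow Patterson's classical construction adapted to the convergence-boundary framework, verifying each clause of Definition \ref{ConformalDensityDefn} in turn. First I would argue that the limit measures $\mu_x = \mu_x^o$ are supported on $[\Lambda Go]$: each approximating measure $\mu_x^{s_i,o}$ is a finite sum of atoms on $Go$, and if $K\subseteq\bU$ is a compact set disjoint from $[\Lambda Go]$, then $K$ contains only finitely many points of $Go$ (the orbit accumulates only on $[\Lambda Go]$ by properness); the divergence of the Poincaré series at $\e G$ forces the total mass of the tail of the series to dominate, so the mass that $\mu_o^{s_i,o}$ assigns to $K$ tends to $0$. Hence $\mu_o(K)=0$ for every such $K$, giving $\operatorname{supp}\mu_o\subseteq[\Lambda Go]$, and since every boundary point of $\hU$ is non-pinched by Theorem \ref{HorobdryConvergence}, we get $\operatorname{supp}\mu_o\subseteq\mathcal C$, so the density is non-trivial. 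Equivariance at the level of the approximating measures is the identity $\mu_{gx}^{s_i,o}(gA)=\mu_x^{s_i,o}(A)$, which is immediate from the definition of $\mu^{s,y}$ and the invariance of $d$; passing to the limit gives $\mu_{gx}=g_\star\mu_x$, i.e.\ $\lambda=1$ in \eqref{almostInv}.

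The heart of the matter is the conformality estimate \eqref{confDeriv}. The standard computation gives, for the finite approximating measures,
\[
\frac{d\mu_x^{s,o}}{d\mu_y^{s,o}}(gy)=e^{-s(d(x,gy)-d(y,gy))},
\]
so I need to show that for $\mu_o$-a.e.\ $\xi\in\mathcal C$ the exponent $s(d(x,g_ky)-d(y,g_ky))$ converges along any sequence $g_ky\to\xi$ to $\e G\,B_\xi(x,y)$, at least up to a bounded multiplicative error $\lambda$. By definition of the horofunction $b_\xi$ and the Busemann cocycle, $d(x,g_ky)-d(y,g_ky)\to b_\xi(x)-b_\xi(y)=B_\xi(x,y)$ as $g_ky\to\xi$ in the horofunction compactification; combined with $s_i\to\e G$ this gives the pointwise limit of Radon–Nikodym derivatives on the set of atoms. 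The passage from this convergence on atoms to a statement about the limit measures is the routine but careful part: one uses a Portmanteau/weak-\*-convergence argument, writing $\mu_x$ and $\mu_y$ as weak limits of the same sequence $s_i$, observing that on a small neighborhood $V$ of a non-pinched point $\xi_0$ the function $g y\mapsto e^{-s_i(d(x,gy)-d(y,gy))}$ is uniformly close to $e^{-\e G\,B_{\xi}(x,y)}$ (uniformly in $\xi\in V$, by continuity of horofunctions and an equicontinuity/Arzelà–Ascoli argument), and integrating. This yields \eqref{confDeriv} with $\lambda=1$ in the limit — so in fact a genuine conformal density, not merely quasi-conformal.

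Finally I would record the normalization $\|\mu_o\|=\mu_o(\bU)=1$, which holds at every finite stage since $\mu_o^{s,o}(\bU)=\p_G(s,o,o)^{-1}\sum_g e^{-s d(o,go)}=1$ and is preserved under weak-\* limits (the total mass of a weak-\* limit of probability measures on the compact space $\bU$ is $1$). I expect the main obstacle to be the uniformity in the conformality estimate: controlling $d(x,gy)-d(y,gy)-B_\xi(x,y)$ uniformly as $gy$ ranges over a boundary neighborhood requires the convergence-boundary axioms, specifically that non-pinched points behave well (Assumption (C)) and the uniform-convergence properties of horofunctions summarized in Theorem \ref{HorobdryConvergence}; without that one only gets the quasi-conformal bound with an unspecified $\lambda$. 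Everything else — support, equivariance, normalization — is essentially formal once Patterson's construction is in place.
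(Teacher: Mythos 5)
The paper states this theorem without proof, recalling it from \cite{YANG22} (and Coulon's work), and your outline is precisely the standard Patterson construction carried out there: mass escapes to $\Lambda Go$ by divergence of the (possibly Patterson-modified) Poincar\'e series, equivariance holds at each finite stage, and conformality with $\lambda=1$ follows because on the horofunction compactification the density $z\mapsto e^{-s\left(d(x,z)-d(y,z)\right)}=e^{-s B_{b_z}(x,y)}$ extends continuously to $\bU$. The only slight misplacement is in your last paragraph: the uniformity you worry about is automatic from the compact-open topology defining $\hU$ (this is exactly why the horofunction boundary yields a genuinely conformal, $\lambda=1$, density), and does not require Assumption (C) or the convergence-boundary axioms; also remember that in the convergent case the series in \eqref{PattersonEQ} must be replaced by Patterson's modified series, as the paper notes.
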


In the sequel, we write PS-measures as shorthand for
Patterson-Sullivan measures.

\subsection{Shadow Lemma}\label{SSshadowlem}
Since $G$ is non-elementary, $G$ contains infinitely many independent contracting elements by Lemma \ref{lem:manycontractingelements}.  In what follows, we make the standing assumption:
\begin{conv}\label{ConventionF}
    Let  $F=\{f_1,f_2,f_3\}$ be a set of three mutually independent contracting elements, and consider the associated contracting system  
\begin{equation}\label{SystemFDef}
\f =\{g\cdot \ax(f_i):   g\in G, 1\le i\le 3\}
\end{equation}
Note that  the axis $\ax(f_i)$    defined in (\ref{axisdefn})  depends on the choice of a basepoint $o\in \U$ and  is $C$--contracting for some $C>0$.   
\end{conv}

Let $r>0$ and $x, y\in \U$. 
First of all, define the usual cone and shadow:  
$$\Omega_{x}(y, r)\quad :=\quad \{z\in \U: \exists [x,z]\cap B(y,r)\ne\emptyset\}$$
and $\Pi_{x}(y, r) \subseteq \pU$ be the topological closure  in $\pU$ of $\Omega_{x}(y, r)$.

The partial shadows $\Pi_o^F(go, r)$ and cones $\Omega_o^F(go, r)$   given in Definition \ref{ShadowDef} depends on the choice of a contracting system $\f$ as in (\ref{SystemFDef}). Without index $F$,   $\Pi_o(go, r)$   denotes the     usual shadow.

\begin{defn}[Partial cone and shadow]\label{ShadowDef}
For $x\in \U, y\in Go$, the \textit{$(r, F)$--cone} $\Omega_{x}^F(y, r)$ is the set of elements $z\in \U$ such that $y$ is an $(r, F)$--barrier for some geodesic $[x, z]$.  %the \textit{$(r, F)$--cone} $\Omega_{x}^F(y, r)$ is the set of elements $g\in G$ such that $y$ is a $(r, F)$--barrier for $[x, go]$.

The \textit{$(r, F)$--shadow} $\Pi_{x}^F(y, r) \subseteq \pU$ is the topological closure  in $\pU$ of the cone $\Omega_{x}^F(y, r)$ in the compactification.
\end{defn}

The key fact in the theory of conformal density is the Sullivan's  shadow lemma.
\begin{lem}\cite[Lemma 6.3]{YANG22}\label{ShadowLem}
Let $\{\mu_x\}_{x\in \U}$ be a $\omega$--dimensional $G$--equivariant conformal density on $\pU$ for some $\omega>0$. Assume that $\mu_o(\mathcal C)=1$. Then there exist $r_0,  n_0 > 0$ depending on $\f$ in Convention \ref{ConventionF} with the following property. 
For given $r \ge  r_0$, there exist $C_0=C_0(\f),C_1=C_1(\f, r)$ such that  for any $go\in Go$,
$$
\begin{array}{rl}
   C_0 \mathrm{e}^{-\omega \cdot  d(o, go)}  \le   \mu_o(\Pi_o^{F}(go,r))  \le \mu_o(\Pi_o(go,r))   \le C_1     \mathrm{e}^{-\omega \cdot  d(o, go)} 
\end{array}
$$
where we replace $F$ by $F^n:=\{f_1^n,f_2^n,f_3^n\}$ for some $n\ge n_0$. 
\end{lem}

As a corollary, we obtain a lower bound on the dimension of a conformal density.
\begin{lem}\cite[Proposition 6.8, Theorem 1.16]{YANG22}\label{UniqueConformalDensity}
Under the assumption of Lemma \ref{ShadowLem}, we have $\omega \ge \e G$.   Moreover,  if the proper action $G\act \U$ is of divergent type, then the conformal density in Theorem \ref{ConformalDensityExists} becomes unique on the quotient $[\hU]$ of the horofunction boundary by the finite difference relation.
\end{lem}

In the sequel, by taking large power of each element in $F$, we always assume that $F$ and $r\ge r_0$ satisfy Lemma \ref{ShadowLem}. We emphasize the contracting  constant $C$ of the system $\f$ is not affected under such modification of $F$.

\subsection{Conical points}
We now introduce  the    definition  of a conical point relative to the above $C$--contracting system $\f$ in Convention \ref{ConventionF}. %  %In the sequel, we always assume that $F$ consists of $L$-long elements as stated in Lemma \ref{ShadowLem} and $r\ge r_0$.

%\begin{defn}[Conical points]\label{ConicalDef1}
%A point $\xi \in \pU$ is called \textit{$L$--conical point} for $L>0$ relative to $\f$ if  for some $x\in \U$, there exists a sequence of $X_n\in \f$ such that $d_{X_n}(x, \xi)\ge L$. Denote by $\Lambda_L^\f(Go)$ the set of all such points.
%\end{defn}

\begin{defn}\label{ConicalDef2}
Let $F$ and $r\ge r_0$ be given by Lemma \ref{ShadowLem}.
A point $\xi \in \pU$ is called \textit{$(r, F)$--conical}   if for some $x\in Go$, the point $\xi$ lies in infinitely many $(r, F)$--shadows $\Pi_x^{F}(y_n, r)$ for $y_n\in Go$.  We denote by  $\Lambda_{r}^F(Go) $ the set of $(r, F)$--conical points.

%If $\sup_{n\ge 1} \{d(y_n, y_{n+1})\}<\infty$, then   $\xi$ is called  \textit{uniformly conical point}. 
\end{defn}
\begin{figure}
    \centering

\tikzset{every picture/.style={line width=0.75pt}} %set default line width to 0.75pt        

\begin{tikzpicture}[x=0.75pt,y=0.75pt,yscale=-1,xscale=1]
%uncomment if require: \path (0,300); %set diagram left start at 0, and has height of 300

%Curve Lines [id:da4571525086961632] 
\draw    (100,103) .. controls (144.5,78) and (352.5,52) .. (524.5,96) ;
\draw [shift={(524.5,96)}, rotate = 194.35] [color={rgb, 255:red, 0; green, 0; blue, 0 }  ][line width=0.75]    (10.93,-3.29) .. controls (6.95,-1.4) and (3.31,-0.3) .. (0,0) .. controls (3.31,0.3) and (6.95,1.4) .. (10.93,3.29)   ;
\draw [shift={(100,103)}, rotate = 330.67] [color={rgb, 255:red, 0; green, 0; blue, 0 }  ][fill={rgb, 255:red, 0; green, 0; blue, 0 }  ][line width=0.75]      (0, 0) circle [x radius= 3.35, y radius= 3.35]   ;
%Shape: Ellipse [id:dp6145654361966988] 
\draw   (284.6,78.45) .. controls (287.68,58.59) and (316.32,46.54) .. (348.57,51.54) .. controls (380.83,56.54) and (404.48,76.69) .. (401.4,96.55) .. controls (398.32,116.41) and (369.68,128.46) .. (337.43,123.46) .. controls (305.17,118.46) and (281.52,98.31) .. (284.6,78.45) -- cycle ;
%Straight Lines [id:da8300165501090848] 
\draw    (312.5,93) -- (373.5,94) ;
\draw [shift={(373.5,94)}, rotate = 0.94] [color={rgb, 255:red, 0; green, 0; blue, 0 }  ][fill={rgb, 255:red, 0; green, 0; blue, 0 }  ][line width=0.75]      (0, 0) circle [x radius= 3.35, y radius= 3.35]   ;
\draw [shift={(312.5,93)}, rotate = 0.94] [color={rgb, 255:red, 0; green, 0; blue, 0 }  ][fill={rgb, 255:red, 0; green, 0; blue, 0 }  ][line width=0.75]      (0, 0) circle [x radius= 3.35, y radius= 3.35]   ;
%Straight Lines [id:da9963947687245229] 
\draw    (312.5,93) -- (312.5,75) ;
\draw [shift={(312.5,72)}, rotate = 90] [fill={rgb, 255:red, 0; green, 0; blue, 0 }  ][line width=0.08]  [draw opacity=0] (8.93,-4.29) -- (0,0) -- (8.93,4.29) -- cycle    ;
%Straight Lines [id:da47310964430537683] 
\draw    (373.5,94) -- (373.5,76) ;
\draw [shift={(373.5,73)}, rotate = 90] [fill={rgb, 255:red, 0; green, 0; blue, 0 }  ][line width=0.08]  [draw opacity=0] (8.93,-4.29) -- (0,0) -- (8.93,4.29) -- cycle    ;

% Text Node
\draw (303.28,94.91) node [anchor=north west][inner sep=0.75pt]  [rotate=-1.93]  {$g_{n} o$};
% Text Node
\draw (354.29,97.42) node [anchor=north west][inner sep=0.75pt]  [rotate=-1.93]  {$g_{n} f_{n} o$};
% Text Node
\draw (313,26.4) node [anchor=north west][inner sep=0.75pt]    {$g_{n}\mathrm{Ax}( f_{n})$};
% Text Node
\draw (92,108.4) node [anchor=north west][inner sep=0.75pt]    {$o$};
% Text Node
\draw (438,102.4) node [anchor=north west][inner sep=0.75pt]    {$\Pi _{o}^{F}( g_{n} o,r) \ni \ \xi $};
% Text Node
\draw (314.5,78.4) node [anchor=north west][inner sep=0.75pt]    {$\leq r$};
% Text Node
\draw (373.5,79.4) node [anchor=north west][inner sep=0.75pt]    {$\leq r$};

\end{tikzpicture}
    \caption{Conical points}
    \label{fig:conicpts}
\end{figure}

%\begin{proof}
%Let $\xi, \eta\in \partial_r^F Go$ be two distinct $[\cdot]$--classes of conical points. By \cite[Lemmas 4.4, 4.6]{YANG22}, there exist geodesic rays $\alpha$ and $\beta$ starting at $o$ ending at $[\xi]$ and $[\eta]$ respectively (i.e. $\alpha^+\in [\xi]$ and $\beta^+\in [\eta]$). Moreover, there exists a bi-infinite geodesic $\gamma$ so that two half rays accumulate into $[\xi]$ and $[\eta]$. Let us briefly recall the ingredients of their proof, as it proves the lemma. 

%Pick up $x_n\in \alpha$ and $y_n\in \beta$. As each of $\alpha$ and $\beta$ contains infinitely many very long contracting segments $p_m$ labeled by $f\in F$, the $C$--contracting property of $\ax(F)$ then implies $[x_n,y_n]$ intersects $N_C(p_m)$ as $n\to\infty$ for a fixed $m$. Thus, $d(o,[x_n,y_n])\le D$ for some $D$ independent of $n$.  By Lemma \ref{lineardiverging},  $[\xi]_s\ne [\eta]_s$, and thus $[\xi]_s=[\xi]$. 
%\end{proof}

The following useful property \cite[Lemma 4.7]{YANG22} resembles the usual definition of conical points in Kleinian groups.
\begin{lem}\label{ConicalPointsLem}
For given $r\ge r_0$ there exists   $\hat r=\hat r(r,C)>0$ with the following property.  Let $\xi\in \Lambda_r^F(Go)$.  For any basepoint $o\in \U$ there exists a geodesic ray $\gamma$ starting at $o$ ending at $[\xi]$  with infinitely many $(\hat r, F)$-barriers.  That is, 
\begin{itemize}
    \item 
    there exist $g_n\in G$ and $f_n\in F$ so that  $d(g_no, \gamma), d(g_nf_no, \gamma)\le \hat r$.
\end{itemize} 
\end{lem}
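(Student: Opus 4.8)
The plan is to take a basepoint $o$, observe that by Definition \ref{ConicalDef2} there is some $x \in Go$ through which $\xi$ lies in infinitely many $(r,F)$--shadows $\Pi_x^F(y_n, r)$ with $y_n \in Go$, and then to (i) pass from the auxiliary basepoint $x$ to the required basepoint $o$ at the cost of enlarging the barrier constant, and (ii) extract an actual geodesic ray from $o$ to (a point in) $[\xi]$ that carries infinitely many barriers, using the convergence boundary axioms. Write $B = d(o,x)$. First I would fix, for each $n$, a geodesic $\alpha_n = [x, z_n]$ where $z_n$ is a point witnessing membership of $\xi$ in the shadow (either $z_n \to \xi$ in $\bU$, or $z_n \in Go$; in either case $\alpha_n$ passes within $r$ of both $g_n o$ and $g_n f_n o$ for suitable $g_n \in G$, $f_n \in F$, by definition of the $(r,F)$--cone). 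Replacing $\alpha_n$ by the geodesic $[o, z_n]$ changes the basepoint but, since the two geodesics share the endpoint $z_n$ and their initial points are at distance $B$, a standard fellow-traveling / thin-quadrilateral estimate using the $C$--contracting property of the axis $g_n \ax(f_n)$ shows that $[o,z_n]$ still passes within $\hat r := \hat r(r, B, C)$ of $g_n o$ and of $g_n f_n o$: indeed the segment of $\alpha_n$ between the two near-points to $g_n\ax(f_n)$ is forced (by contraction and quasi-convexity, Lemma \ref{BigThree}(1)) into a bounded neighborhood of that axis, and $[o,z_n]$ cannot avoid this bounded neighborhood without the projection of its endpoints to the axis being small, which it is not. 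Thus each $[o, z_n]$ is a geodesic from $o$ carrying at least one $(\hat r, F)$--barrier, at the axis $g_n \ax(f_n)$.

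Next I would promote these finitely-barriered geodesics to a single ray with infinitely many barriers. Since $\xi$ is conical, infinitely many of the barrier loci $g_n \ax(f_n)$ are escaping (their distances to $o$ are unbounded — if they accumulated, properness of the action would force $\xi$ to lie in only finitely many distinct shadows). Passing to a subsequence, the geodesics $[o, z_n]$ have their ``barrier portions'' going to infinity. By the Arzelà–Ascoli property of the proper space $\U$, a subsequence of $[o, z_n]$ converges uniformly on compacts to a geodesic ray $\gamma$ from $o$. One checks that $\gamma$ inherits every barrier that stabilizes: for each fixed $k$, all but finitely many $[o, z_n]$ pass within $\hat r$ of the fixed points $g_k o$ and $g_k f_k o$, hence so does the limit $\gamma$ — giving $\gamma$ infinitely many $(\hat r, F)$--barriers, as required. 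Finally, to see $\gamma$ terminates in $[\xi]$: the barrier axes $N_r(g_k \ax(f_k))$ form an escaping sequence of uniformly contracting quasi-geodesics whose cones at $o$ all contain (tails of) $\gamma$ and also contain points converging to $\xi$; applying the convergence axiom (B) of Definition \ref{ConvBdryDefn} to this sequence yields a $[\cdot]$--class into which both $\gamma$ and $\xi$ accumulate, so $\gamma$ ends at $[\xi]$. (Alternatively, once $\gamma$ has infinitely many escaping contracting barriers one can invoke axiom (A) directly on a contracting subsegment of $\gamma$.)

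The main obstacle I anticipate is the basepoint change in the first step: carefully quantifying how a barrier on $[x,z_n]$ survives — with a controlled, $n$-independent constant $\hat r$ depending only on $r$, $d(o,x)$ and the contracting constant $C$ — onto $[o,z_n]$. This requires the ``bounded projection'' interplay between the contracting axis $g_n\ax(f_n)$ and the two geodesics, i.e. that a geodesic with endpoints near a contracting set actually travels near that set (Lemma \ref{BigThree}(1),(4)) and that a second geodesic sharing one endpoint and starting boundedly far from the first cannot escape a bounded neighborhood of that contracting set. A secondary technical point is justifying the escaping property of the barrier loci from conicality plus properness; this is essentially \cite[Lemma 4.4]{YANG22} and should be quotable, but I would reprove the escaping claim by a short pigeonhole argument if needed. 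The convergence-boundary step (B) is routine given the axioms already set up in Definition \ref{ConvBdryDefn} and Lemma \ref{RegCConverge}.
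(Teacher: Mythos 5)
First, note that the paper does not actually prove this lemma: it is quoted from \cite[Lemma 4.4]{YANG22} with no proof given in the text, so your proposal is being measured against the standard argument rather than an in-paper proof. Within your outline, the basepoint-change step is sound (the projections of $o$ and $x$ to the contracting axis $g_n\ax(f_n)$ differ by at most $d(o,x)+\hat C$ by Lemma \ref{BigThree}(4), so $[o,z_n]$ still crosses a bounded neighbourhood of the segment from $g_no$ to $g_nf_no$, with $\hat r$ depending on $d(o,x)$), the escaping claim follows from properness as you say, and the identification of the endpoint of $\gamma$ with $[\xi]$ via Assumption (B) is the same device as Lemma \ref{RegCConverge}.

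The genuine gap is in the passage from ``one barrier per witness geodesic'' to ``infinitely many barriers on the limit ray''. You choose, for each $n$, a witness $z_n\in\Omega_x^F(y_n,r)$, so $[x,z_n]$ is only known to carry the $n$-th barrier; since $d(o,y_n)\to\infty$, that barrier sits outside every fixed ball for $n$ large, and the locally uniform limit $\gamma$ of the $[o,z_n]$ is therefore a priori not guaranteed to carry even a single barrier. The assertion you rely on --- that for each fixed $k$, all but finitely many $[o,z_n]$ pass within $\hat r$ of $g_ko$ and $g_kf_ko$ --- is the crux of the lemma and does not follow from your choices. Supplying it requires an additional argument: since all $z_n$ and some sequence $w_m\in\Omega_x^F(y_k,r)$ converge to the same $\xi$, and $\xi$ is non-pinched, the geodesics $[z_n,w_m]$ escape by Assumption (C); projecting $[z_n,w_m]$ to the $C$--contracting axis $g_k\ax(f_k)$ for $n,m$ large then forces $\pi(z_n)$ to lie within bounded distance of $\pi(w_m)$, i.e.\ at or beyond $g_kf_ko$, so $[o,z_n]$ must cross the segment from $g_ko$ to $g_kf_ko$ up to a bounded error. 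This is also precisely where non-pinchedness of $\xi$ (or some substitute) enters, which your proposal never invokes. Once this step is supplied, your Arzel\`a--Ascoli extraction and the endpoint identification go through.
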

Note, to keep $F$ unchanged, the constant $\hat r$ depends on $r$, so   $\gamma$ intersects $N_{\hat r}(Go)$ unboundedly. For the later use,  we need to derive the following uniform result. 
\begin{cor}\label{ConicalPointsIntersUnifNbhd}
Let $F$ and $r\ge r_0$ be given by Lemma \ref{ShadowLem}.
For any $\xi\in \Lambda_r^F(Go)$, any geodesic ray $\gamma$ starting at $o$ and ending at $[\xi]$ intersects $N_C(Go)$ unboundedly.    
\end{cor}
\begin{proof}
The proof is based on a general fact. Let $U$ be a $C$-contracting subset. For any $\hat r>0$ there exists $L=L(\hat r)$ so that for  a geodesic segment $\alpha$ with $\mathrm{diam}(\alpha\cap N_{\hat r}(U))>L$, we have $\alpha\cap N_C(U)\ne \emptyset$. To complete the proof, it suffices to take $F=F^n$ so that $\min\{d(o,fo): f\in F\}>L$. 
\end{proof}

Combined with Lemma \ref{lineardiverging}, we obtain   the following corollary (see \textsection \ref{subsec: horofunction boundary} for definitions).
\begin{lem}
The sublinear difference relation restricted to the set of conical points coincides with finite difference relation.
\end{lem}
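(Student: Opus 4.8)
The statement asserts that for conical points $\xi, \eta \in \Lambda_r^F(Go)$, we have $[\xi]_s = [\eta]_s$ if and only if $[\xi] = [\eta]$ (finite difference). The direction $[\xi] = [\eta] \Rightarrow [\xi]_s = [\eta]_s$ is immediate since the finite difference partition refines the sublinear one (already noted in the excerpt after Theorem \ref{HorobdryConvergence}). So the content is the converse: if $[\xi] \ne [\eta]$ as finite-difference classes and both are conical, then they remain distinct as sublinear-difference classes.

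The plan is to invoke Lemma \ref{ConicalPointsLem} to obtain, for the basepoint $o$, a geodesic ray $\gamma$ from $o$ ending at $[\xi]$ with infinitely many $(\hat r, F)$--barriers: points $g_n o, g_n f_n o$ at distance $\le \hat r$ from $\gamma$, where $f_n \in F$ and the axes $g_n \ax(f_n)$ are $C$--contracting. The segments of $\gamma$ near these barriers fellow-travel translates of the contracting axes $g_n \ax(f_n)$; since these axes have bounded intersection (Convention \ref{ConventionF1} gives a contracting \emph{system} with bounded projection) and $d(o, f_n o)$ can be taken large, $\gamma$ contains infinitely many \emph{disjoint} $C'$--contracting subsegments of a fixed length $L$ lying in the $\hat r$--neighborhood of $\gamma$ — here I would choose $L$ as large as needed by taking the $f_n$ to high powers if necessary, exactly as in Convention \ref{ConventionF1} (``$d(o,fo)$ large as possible''). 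This puts us in the hypothesis of Lemma \ref{SublinearClassLem}: $\gamma$ contains infinitely many disjoint $C'$--contracting segments of length $L \gg 0$ in its $\hat r$--neighborhood. Applying Lemma \ref{SublinearClassLem} with the two rays $\gamma$ (ending at $[\xi]$) and any geodesic ray $\beta$ ending at $[\eta]$, the assumption $[\xi] \ne [\eta]$ yields $[\xi]_s \ne [\eta]_s$, which is what we want.

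The main obstacle — really the only non-formal point — is verifying that the $(\hat r, F)$--barriers along $\gamma$ can be upgraded to genuinely \emph{disjoint contracting subsegments of a uniform length} $L$, where $L$ is as large as Lemma \ref{SublinearClassLem} demands. The barriers from Lemma \ref{ConicalPointsLem} only guarantee that $\gamma$ passes $\hat r$--close to the two endpoints $g_n o$ and $g_n f_n o$ of a translate axis; I need the portion of $\gamma$ between these two passages to actually shadow $g_n \ax(f_n)$ over a long segment. This follows from quasi-convexity of contracting sets (Lemma \ref{BigThree}(1)): the subpath of $\gamma$ between points near $g_n o$ and $g_n f_n o$ is a geodesic with endpoints near $g_n \ax(f_n)$, hence stays in a bounded neighborhood of it, and its length is comparable to $d(g_n o, g_n f_n o) = d(o, f_n o)$, which we have arranged to exceed $L$. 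By Lemma \ref{BigThree}(2),(3) this subpath is itself $C'$--contracting for a uniform $C'$. Disjointness for infinitely many of them is arranged by passing to a subsequence using that the barrier points escape to infinity (the $g_n o$ form an escaping sequence, as in the proof of Lemma \ref{RegCConverge}). Once this geometric bookkeeping is in place, the conclusion is a direct citation of Lemmas \ref{SublinearClassLem} and \ref{lineardiverging}, so the proof is short.

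\begin{proof}
Since the finite difference partition refines the sublinear difference partition, $[\xi]=[\eta]$ implies $[\xi]_s=[\eta]_s$. Conversely, suppose $\xi,\eta\in\Lambda_r^F(Go)$ with $[\xi]\ne[\eta]$. By Lemma \ref{ConicalPointsLem}, there is $\hat r>0$ and a geodesic ray $\gamma$ from $o$ ending at $[\xi]$ together with $g_no\in Go$ and $f_n\in F$ such that $d(g_no,\gamma),\ d(g_nf_no,\gamma)\le\hat r$; moreover the axes $g_n\ax(f_n)$ are $C$--contracting. Passing to a subsequence, the points $g_no$ escape, so $d(o,g_no)\to\infty$. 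Fix any $L>0$; by taking powers of the $f_n$ if necessary (Convention \ref{ConventionF1}) we may assume $d(o,f_no)>L$ for all $n$, without changing the contracting constant.

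For each $n$, pick points $x_n,y_n$ on $\gamma$ with $d(x_n,g_no)\le\hat r$ and $d(y_n,g_nf_no)\le\hat r$. The subpath $[x_n,y_n]_\gamma$ is a geodesic whose endpoints are $\hat r$--close to $g_n\ax(f_n)$; by Lemma \ref{BigThree}(1) it lies in a bounded neighborhood (depending only on $C$ and $\hat r$) of $g_n\ax(f_n)$, and by Lemma \ref{BigThree}(2) it is $C'$--contracting for some uniform $C'=C'(C,\hat r)$. Its length is at least $d(g_no,g_nf_no)-2\hat r = d(o,f_no)-2\hat r$, which exceeds $L$ once $L$ was chosen after enlarging the $f_n$. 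Hence $[x_n,y_n]_\gamma$ contains a $C'$--contracting subsegment $p_n$ of length exactly $L$, lying in the $\hat r$--neighborhood of $\gamma$. Since $d(o,g_no)\to\infty$ and $p_n$ is within bounded distance of $g_no$, the $p_n$ escape; passing to a further subsequence we may assume they are pairwise disjoint.

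Thus $\gamma$ contains infinitely many disjoint $C'$--contracting segments of length $L$ in its $\hat r$--neighborhood, and $L$ can be taken arbitrarily large. Let $\beta$ be any geodesic ray from $o$ ending at $[\eta]$. Since $[\xi]\ne[\eta]$, Lemma \ref{SublinearClassLem} applies and gives $[\xi]_s=[\gamma^+]_s\ne[\beta^+]_s=[\eta]_s$. This completes the proof.
\end{proof}
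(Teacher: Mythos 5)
Your proof is correct and follows exactly the route the paper intends: the paper derives this lemma in one line by combining Lemma \ref{ConicalPointsLem} with Lemma \ref{lineardiverging}, and your write-up simply fills in the intermediate bookkeeping (upgrading the $(\hat r, F)$--barriers to disjoint contracting subsegments and then citing Lemma \ref{SublinearClassLem}, which is itself the paper's packaging of Lemma \ref{lineardiverging}). The only cosmetic point is that the enlargement of $d(o,f o)$ should be fixed once and for all when $F$ is chosen (Convention \ref{ConventionF1}), rather than "taking powers of the $f_n$" after the conical point is given, but this is how the paper sets up $F$ anyway and is not a gap.
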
 
At last,  we recall the following fact saying that conical points are generic for divergence type action. It is a part of  the Hopf-Tsuji-Sullivan dichotomy \cite[Theorem 1.14]{YANG22}, where the converse (easier direction) is also true.
\begin{lem}\label{HSTLem}
Let $F$ and $r\ge r_0$ be given by Lemma \ref{ShadowLem}. Let $\{\mu_x\}_{x\in \U}$ be a $\omega$--dimensional $G$--equivariant conformal density on $\pU$ for some $\omega>0$. Assume that $\mu_o(\mathcal C)=1$.   If $G\act \U$ is of divergence type, then $\mu_o$ charges full measure on $[\Lambda_r^F(Go)]$ and every $[\xi]$--class is $\mu_o$--null.
\end{lem}

%In what follows, we always assume that $G\act \U$ is a non-elementary SCC action with a contracting element. 

\section{Genericity of sublinearly Morse directions}\label{SecHTST}
Let $G\act \U$ be a non-elementary SCC action on a proper geodesic metric space $X$ with a contracting element. We fix a basepoint $o\in X$. This section is devoted to the proof of    \ref{MainThm} for the horofunction boundary. Actually, we prove the following   result on a general convergence boundary. 

%By Lemma \ref{SubMorseInHorobdry}, the underlying set of the sublinearly Morse boundary $\pka \U$ is a subset of the sublinearly reduced horofunction boundary $[\pU]_s$, i.e. where two horofunctions with sublinear difference are identified. 

\begin{thm}\label{HTSThm} 
Assume that $X$ is equipped with a convergence boundary $(\pU,[\cdot])$ and  $\{\mu_x\}_{x\in \U}$ is a $\omega$--dimensional $G$--quasi-equivariant quasi-conformal density  for some $\omega>0$, supported on the subset of non-pinched points $\mathcal C\subseteq \pU$.    Then the set of $[\cdot]$-classes of boundary points represented by regularly contracting rays has $\mu_o$--full measure.  
\end{thm}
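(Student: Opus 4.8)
The plan is to show that the complement of the regularly contracting directions is $\mu_o$--null, by exploiting the Borel--Cantelli type estimates coming from the Shadow Lemma together with the exponential negligibility of barrier-free elements furnished by the SCC hypothesis. By Corollary~\ref{RCasCountableIntersection}, the set $\mathcal{RC}(r,C)$ of regularly contracting directions is a countable intersection of the sets $\mathcal{FC}(\theta,r,C,L)$ over $L\in\mathbb N$ and $\theta\in(0,1]\cap\mathbb Q$; since a countable intersection of full-measure sets is full measure, it suffices to prove that for a \emph{fixed} choice of parameters $r,C$ (taken large enough so that the three axes $\ax(f_i)$ are $C$--contracting and the Shadow Lemma applies, with $r\ge r_0$), and for \emph{every} $L$ and rational $\theta$, the set $\mathcal{FC}(\theta,r,C,L)$ has $\mu_o$--full measure. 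So the whole argument reduces to a single genericity statement for $(r,C,L)$--contracting-at-$\theta$--frequency rays.

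\textbf{Key steps.}
First I would fix $L$ and $\theta$ and describe the ``bad'' set $B$ of directions $[\xi]\in[\pU]$ for which \emph{no} geodesic ray from $o$ to $[\xi]$ is eventually $(r,C,L)$--contracting at $\theta$--frequency; equivalently (after unwinding the definition) every such ray has arbitrarily far-out $\theta$--windows containing no length-$L$ subsegment that is $r$--close to a $C$--contracting geodesic. The second step is to stratify $B$ by scale: I would cover $B$ by shadows $\Pi_o(go,r)$ of orbit points $go$ with $d(o,go)\approx n$, so that a direction in $B$ corresponds to orbit points $go$ along a geodesic such that the relevant sub-window of $[o,go]$ is ``barrier-free'' with respect to the system $F$ — more precisely, any geodesic representative passing near $go$ has a long stretch missing all translates of the axes $\ax(f_i)$. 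The third and central step is to turn this into a counting estimate: using the partition of $[o,go]$ into $\asymp 1/\theta$ windows of length $\theta d(o,go)$ and requiring one window to be $(r,F)$--barrier-free, one expresses the set of such $go$ in terms of the barrier-free sets $\mathcal V_{r,M,f}$ (or concatenations thereof, since a long barrier-free stretch in the interior of $[o,go]$ means $go$ factors as a product $g=g_1 g_2$ where $g_1$ or a middle portion lies in $\mathcal O_M$), whose critical exponent is $\e{\mathcal O_M}<\e G$ by Definition~\ref{SCCDefn} and Proposition~\ref{pro:growthtight}(2). The fourth step feeds this into the Shadow Lemma: $\mu_o(B)\le \sum \mu_o(\Pi_o(g o,r)) \le C_1 \sum e^{-\omega d(o,go)}$ where the sum ranges over the bad orbit points at scale $n$; since $\omega\ge\e G$ and the number of bad points at scale $n$ is $\preceq e^{(\e G - \epsilon)n}$ for some $\epsilon>0$ (growth gap, applied within at least one of the $\asymp 1/\theta$ windows, with a union bound over windows costing only a factor $1/\theta$), each scale-$n$ contribution is $\preceq e^{-\epsilon n}$, and summing over $n$ and then taking the limsup over scales shows $\mu_o(B_{\ge N})\to 0$, hence $\mu_o(B)=0$.

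\textbf{Main obstacle.}
The delicate point is the combinatorial/geometric translation in the third step: making precise the claim that a direction failing the $\theta$--frequency condition at scale $n$ forces a genuine barrier-free sub-window of the geodesic $[o,go]$, and then repackaging that sub-window as an element (or a bounded product of elements) in a set of growth strictly below $\e G$. One has to be careful that ``no $C$--contracting barrier in a window'' translates to ``$(r,F)$--barrier-free'' for the \emph{specific} contracting system $\f$ of Convention~\ref{ConventionF1} — this uses that any $C$--contracting geodesic near the window would, up to controlled error, yield a translate of some $\ax(f_i)$ acting as a barrier, which in turn relies on the bounded-intersection/contracting machinery of Lemma~\ref{BigThree} and the fact that $F$ consists of three independent contracting elements (so that contracting geodesics in general position are forced to fellow-travel one of the $\ax(f_i)$). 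One also needs the sliding/overlap bookkeeping so that finitely many windows of length $\theta\ell$ cover the ray and the union bound is harmless, and needs the $\ell^\infty$ comparison (Lemma~\ref{BigThree}(4)) to pass between the metric statement and the shadow counting. Once this dictionary is set up, the measure estimate is a routine Borel--Cantelli argument powered by Lemma~\ref{ShadowLem} and Proposition~\ref{pro:growthtight}.
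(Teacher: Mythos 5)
Your proposal matches the paper's proof in all essentials: the reduction to fixed $(\theta,L)$ via Corollary~\ref{RCasCountableIntersection}, the containment of the bad set in a limsup of shadows over orbit points whose geodesics contain a $\theta$--window free of $(r,h)$--barriers (the paper's set $\mathcal W(\theta,r,h)$ and $\mathcal{NFB}(\theta,r,h)$), the growth-tightness of that set obtained by factoring $g=g_1\cdot\hat g\cdot(g_2^{-1}g)$ with $\hat g\in\mathcal V_{r,M,h}$ occupying a definite proportion $\theta$ of $d(o,go)$ (Lemma~\ref{frequentbarriers}), and the concluding Shadow Lemma/Borel--Cantelli estimate (Lemma~\ref{FullFreqAtTheta}). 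The one place your ``main obstacle'' discussion points the wrong way is the dictionary step: the proof does not need (and could not use) that a $C$--contracting geodesic near a window fellow-travels some translate of $\ax(f_i)$ --- only the easy implication is used, namely that an $(r,h)$--barrier supplies a length-$L$ subsegment $r$--close to the $C$--contracting translate of $[o,ho]\subseteq\ax(f)$, so absence of such subsegments forces absence of barriers (Lemma~\ref{fBarrierimplyfContract}); the genuinely delicate points are instead the independence of the chosen geodesic between $B(o,M)$ and $B(go,M)$ (Lemma~\ref{ExistsThetaBarrier}) and the extension of the window to a time where the ray re-enters $N_M(Go)$ (Case 2 of Lemma~\ref{keylemma}, via Lemma~\ref{ThinRegion} and conicality).
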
 

Note that regularly contracting rays accumulate into a unique $[\cdot]$-class by Lemma \ref{RegCConverge}. We start by recalling the basic setup.  

By Convention \ref{ConventionF}, we fix a set of three independent contracting elements $F\subseteq G$, so that  $C>0$ is the common contracting constant for each $f\in F$.  That is, the  axes $\ax( f)=E(f)\cdot o$ is a $C$--contracting quasi-geodesic.  For   simplicity, by taking a larger value of $C$, we may assume that  any geodesic segment with two endpoints in $\ax( f)$ is $C$--contracting. This is guaranteed by Lemma \ref{BigThree}(2) and (3).

Unless otherwise stated, assume further that $F$ and $r\ge r_0$ satisfy Lemma \ref{ShadowLem}.

\subsection{Growth tightness of elements without frequent barriers}
We introduce in Definition \ref{FreqBarrierDefn} an analogous notion of frequently contracting segments (Definition \ref{FreqContrDefn}), group elements with frequent barriers, which  involves the group action. The goal of this subsection is to prove that such elements are exponentially generic (see Proposition \ref{frequentbarriers}).

Recall that for any $\theta\in [0,1]$, a \textit{$\theta$--segment} of a geodesic segment $\gamma$ means   a connected and {closed} subsegment of $\gamma$ with length $\theta \ell(\gamma)$. If $\gamma$ has $(r, f)$--barriers, by Definition \ref{BarrierDef}, there exists $t\in G$ so that $to, tfo\in N_r(\gamma).$

\begin{defn}\label{FreqBarrierDefn}
Fix $\theta\in (0,1]$, $r>0$ and $f\in G$. We say that    a geodesic $\gamma$ contains \textit{$(r, f)$--barriers at $\theta$--frequency} if 
  every $\theta$--segment of $\gamma$ has $(r, f)$--barriers.

An element $g\in G$ has \textit{$(r, f)$--barriers at $\theta$--frequency} if there exists a geodesic $\gamma$ between $B(o, M)$ and $B(go, M)$ such that $\gamma$ has  $(r, f)$--barriers at $\theta$--frequency.
\end{defn}

We need the following two elementary lemmas. The first one is an immediate consequence  from Lemma \ref{close barriers in two geodesics}.

\begin{lem}\label{ExistsThetaBarrier}
Given $r, M>0$, there exist $\hat r=\hat r(r,C,M)$ and $L_1=L_1(C,M)>0$ with the following property. Choose $h\in E( f)$ for some $f\in F$ so that $d(o,ho)>L_1$. Let  $\alpha, \beta$ be two geodesics  from $B(o,M)$ to $B(go,M)$. If $\alpha$ contains a $\theta$--segment with $(r,  h)$--barriers, then $\beta$ contains some $\theta$--segment with  $(\hat r, h)$--barriers.      
\end{lem}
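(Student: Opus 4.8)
\textbf{Proof strategy for Lemma \ref{ExistsThetaBarrier}.}

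The plan is to transfer a barrier configuration from the geodesic $\alpha$ to the geodesic $\beta$ by exploiting that both geodesics share (coarsely common) endpoints and that the translated axis $t\cdot\ax(h)$ providing the barrier on $\alpha$ is $C$--contracting. First I would unpack the hypothesis: since $\alpha$ contains a $\theta$--segment with $(r,h)$--barriers, there is some $t\in G$ with $to,tho\in N_r(\alpha)$, and moreover (possibly after enlarging $r$ by a bounded amount depending on $C$) the points $to,tho$ may be taken to lie on the $C$--contracting axis $t\cdot\ax(h)$ itself, with $d(to,tho)=d(o,ho)>L_1$. The goal is to produce $t'\in G$ (in fact $t'=t$ will work) with $t'o,t'ho\in N_{\hat r}(\beta)$ sitting inside the corresponding $\theta$--segment of $\beta$.

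The key step is a contracting-projection argument. Let $Z=t\cdot\ax(h)$, a $C$--contracting quasi-geodesic, hence by Lemma \ref{BigThree}(1) it is $\sigma(C)$--quasi-convex, and a geodesic between its endpoints is $C$--contracting. Both $\alpha$ and $\beta$ run between the balls $B(o,M)$ and $B(go,M)$; since $\alpha$ meets $N_r(Z)$ (near $to$ and near $tho$, which are far apart by the choice of $L_1\gg M$), the endpoints of $\alpha$ — and hence, up to the additive error $2M$, the endpoints of $\beta$ — project onto $Z$ to points that straddle the segment $[to,tho]_Z$. Applying the contracting property of $Z$ to the geodesic $\beta$: either $\beta$ enters $N_{C}(Z)$, or $\mathrm{diam}(\pi_Z(\beta))\le C$; the latter is impossible because the two endpoints of $\beta$ project near the two endpoints of $\alpha$ (controlled using Lemma \ref{BigThree}(4) and the triangle inequality, with error absorbing $M$ and $C$), which are separated by $d(o,ho)>L_1$. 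Choosing $L_1=L_1(C,M)$ large enough forces $\beta$ to fellow-travel a definite sub-length of $Z$ near both $to$ and $tho$, so that $d(to,\beta),d(tho,\beta)\le\hat r$ for $\hat r=\hat r(C,r)$ a bounded function of $C$ and $r$ (roughly $\hat r\asymp r+C$). Finally I would check the frequency bookkeeping: because the barrier witnesses $to,tho$ on $\beta$ lie (coarsely) in the same location along the shared endpoints as they did on $\alpha$, the $\theta$--segment of $\beta$ in the ``same position'' as the original $\theta$--segment of $\alpha$ inherits the barrier; a clean way is to note a $\theta$--segment of a geodesic between fixed coarse endpoints of length $\ell\pm 2M$ covers, up to bounded error, a proportion-$\theta$ window of the chord, and the pair $to,tho$ fall in that window provided $\theta\ell(\alpha)$ is not too small relative to $M$ — which again is arranged by the lower bound on $d(o,ho)$ and hence on the relevant lengths.

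The main obstacle I expect is the frequency/position matching rather than the contracting estimate: ensuring that the transferred barrier lands inside a $\theta$--segment of $\beta$ (not merely somewhere on $\beta$) requires controlling where $\pi_Z(\beta)$ sits relative to $\beta$'s own arclength parametrization, and reconciling the two parametrizations of $\alpha$ and $\beta$ up to the slack $2M$ at the endpoints. I would handle this by observing that nearest-point projection to a $C$--contracting set is coarsely monotone along any geodesic (a standard consequence of the contracting property, cf.\ Lemma \ref{BigThree}), so the entry and exit points of $\beta$ into $N_{\hat r}(Z)$ occur in the expected order and at the expected fractional position, up to an additive error that is swallowed by taking $L_1$, and correspondingly $\theta\ell(\alpha)$, large enough.
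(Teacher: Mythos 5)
Your proposal is correct and follows essentially the same route as the paper's proof: project $\beta$ to the $C$--contracting axis $t\cdot\ax(h)$, use the coarse Lipschitz projection together with $d(o,ho)>L_1\gg 0$ to rule out $\mathrm{diam}(\pi_{t\ax(h)}(\beta))\le C$, conclude that $\beta$ enters $N_r(t\ax(h))$ with entry/exit points uniformly close to those of $\alpha$, and hence that $to,tho$ are $\hat r$--close to $\beta$ for $\hat r=\hat r(C,r)$. The positional bookkeeping you worry about at the end is handled in the paper simply by extending the fellow-travelled subsegment of $\beta$ to a $\theta$--segment containing the barrier, since the lemma only asks for the existence of some $\theta$--segment with an $(\hat r,h)$--barrier.
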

\begin{proof}
Let $p$ be a $\theta$--segment of $\alpha$ with an $(r,  h)$--barrier. That is, there exists  $t\in G$ so that  $d(to,p), d(t ho,p)\le r$. By assumption, $[o,ho]$ is a $C$-contracting segment. Let $\hat r=\hat r(r,C,M)$ be given by Lemma \ref{close barriers in two geodesics}. Thus, there exist $x,y\in \beta$ so that $d(to, x), d(tho,y)\le \hat r$.  If necessary, we may extend the segment $[x,y]$ so that it is longer to be a $\theta$--segment of $\beta$, so $\beta$ contains an $(\hat r, h)$--barrier in a $\theta$--segment. This completes the proof. 
%Let $u, v$ be the entry and exit points of $\alpha\cap N_r(t\ax(f))$, so $[u,v]$ contains this $(r,  h)$--barrier $to$ : $$d(u,v)\ge d(o, ho)-2r\ge L_1-2r.$$ As the shortest projection map to  $t\ax(f)$ is coarsely $1$--Lipschitz by Lemma \ref{BigThree}(4), the two balls $B(o,M)$ and $B(go,M)$ project to a bounded set in $t\ax(f)$ of diameter at most $B$ depending on $M$ and $C$. 
%Now, if $\beta$ was disjoint with  $N_r(t\ax(f))$ where $r\ge C$, we would project $\beta$ to $t\ax(f)$ with diameter at most $C$, yielding that $d(u,v)\le 2B+C$. If we choose $$L_1\gg 2B+C+2r$$ which gives a contradiction, then $\beta$ has to intersect $N_r(t\ax(f))$. 
%Let $x$ and $y$ be  the entry and exit points of $\beta$ in $N_r(t\ax(f))$. Arguing  with a similar projection argument,  we see that $x,y$ are within a distance $D$ to $u$ and $v$ respectively, where $D:=2B+2C$. By Lemma \ref{BigThree}, $[x,y]$ is $\hat C$--contracting and the quasi-convexity implies  $[u,v]$ is $r'$--close to $[x,y]$ for some $r'=r'(\hat C)$. Thus,  $$d(to, [x,y]), d(tfo, [x,y])\le \hat r:=r+r'.$$ If necessary, we extend $[x,y]$ longer to be a $\theta$--segment of $\beta$, so $\beta$ contains  $(\hat r, h)$--barriers in a $\theta$--segment.     
\end{proof}

The following lemma will be used in the proof of Proposition \ref{keylemma}. Roughly speaking, every elements in $\mathcal{O}_{M}$ have  no $(r,f)$--barrier. %

\begin{lem}\label{ThinRegion}
Assuming $M\ge C$, let $\gamma$ be a geodesic with two endpoints in $N_M(Go)$ and $\alpha$ be a component  in the complement $\gamma\setminus  N_M(Go)$. Then for any $\hat r>0$, there exists a constant $L_2=L_2(\hat r,C)>0$ so that $\alpha\cap N_{\hat r}(h\ax(f))$ has diameter at most $L_2$ for any $h\in G$ and $f\in F$.   
\end{lem}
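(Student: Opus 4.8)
The plan is to argue by contradiction: suppose $\alpha \cap N_r(h\,\ax(f))$ has large diameter. Pick two points $u, v$ on this intersection with $d(u,v)$ as large as we wish. Since $\alpha$ is a subpath of the geodesic $\gamma$, the segment $[u,v]$ is a geodesic with both endpoints in $N_r(h\,\ax(f))$, hence by Lemma \ref{BigThree}(2)(3) it is $\hat C$--contracting for $\hat C = \hat C(C,r)$, and by quasi-convexity it $r'$--fellow-travels the sub-quasi-geodesic of $h\,\ax(f)$ between the projections of $u$ and $v$, for $r' = r'(\hat C)$.

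Now here is the key point: $h\,\ax(f) = h E(f)\,o$ is an orbit of a subgroup, so every point of $h\,\ax(f)$ lies in $Go$, and therefore the long sub-quasi-geodesic of $h\,\ax(f)$ between those two projection points is entirely contained in $Go$. Consequently $[u,v]$, being $r'$--close to it, is contained in $N_{r'}(Go)$. But $\alpha$ is, by hypothesis, a component of $\gamma \setminus N_M(Go)$, so every interior point of $\alpha$ — in particular the interior of $[u,v]$ — lies \emph{outside} $N_M(Go)$. If we have arranged $d(u,v)$ large enough that $[u,v]$ has an interior point whose distance along $h\,\ax(f)$ keeps it $r'$--close to a point of $Go$ while simultaneously being outside $N_M(Go)$, we get a contradiction as soon as $M \ge r'$; since $r'$ depends only on $C$ and $r$, the threshold $L_2$ on $\mathrm{diam}(\alpha \cap N_r(h\,\ax(f)))$ forcing this contradiction also depends only on $C$ and $r$. (One must be slightly careful near the two endpoints $u,v$ of the intersection, which may be within bounded distance of where $\alpha$ re-enters $N_M(Go)$; but taking $d(u,v) > L_2$ for $L_2$ suitably larger than the fellow-traveling and Lipschitz-projection constants leaves a genuine interior portion of $[u,v]$ forced to be both in $N_{r'}(Go)$ and outside $N_M(Go)$.)

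I expect the main obstacle to be the bookkeeping at the endpoints: ensuring that a \emph{middle} sub-arc of $[u,v]$ (not just its extreme points) is simultaneously trapped in $N_{r'}(Go)$ and, by virtue of lying in the interior of $\alpha$, outside $N_M(Go)$. This is handled by choosing $L_2$ comfortably larger than $2(r + r' + \hat C)$ plus the coarse-Lipschitz constant of nearest-point projection to $h\,\ax(f)$ from Lemma \ref{BigThree}(4), so that after discarding bounded collars near $u$ and $v$ there is still a nonempty middle arc, and then invoking $M \ge C$ together with $M \ge r'$ — or rather, choosing $L_2$ so the contradiction is already reached for the fixed $M$ at hand. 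Everything else is a routine application of the contracting-set toolkit in Lemma \ref{BigThree}.
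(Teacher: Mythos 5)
There is a genuine gap, and it sits exactly where you flagged your own unease: the contradiction you aim for requires $M\ge r'$, but the lemma only assumes $M\ge C$. Your argument concludes that a long piece of $[u,v]$ lies in $N_{r'}(Go)$, where $r'$ is a fellow-travelling/quasi-convexity constant built from $\hat C(C,r)$; this constant is in general strictly larger than $C$ (it already absorbs $r\ge C$ and the quasi-convexity gauge $\sigma$). Being in $N_{r'}(Go)$ does not contradict lying outside $N_M(Go)$ unless $r'\le M$, and no choice of $L_2$ can repair this: enlarging $L_2$ lengthens $[u,v]$ but does not shrink $r'$. Since the hardest case of the lemma is precisely $M$ near $C$, the range $M\in[C,r')$ where your contradiction fails is exactly the range that matters. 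The parenthetical ``choosing $L_2$ so the contradiction is already reached for the fixed $M$ at hand'' is not available either, because $L_2$ must depend only on $r$ and $C$.

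The paper's proof runs in the opposite direction and avoids the issue entirely. Since $h\ax(f)=hE(f)o\subseteq Go$ and $C\le M$, we have $N_C(h\ax(f))\subseteq N_M(Go)$, so the component $\alpha$ (which avoids $N_M(Go)$) satisfies $d(\alpha,h\ax(f))\ge C$. The geodesic $\alpha$ is therefore disjoint from the $C$-neighbourhood of the $C$-contracting set $h\ax(f)$, and the contracting property gives $\mathrm{diam}\bigl(\pi_{h\ax(f)}(\alpha)\bigr)\le C$ directly. If $x,y\in\alpha\cap N_r(h\ax(f))$ with projections $x',y'$, then $d(x,y)\le d(x,x')+d(x',y')+d(y',y)\le C+2r$, so $L_2=C+2r$ works. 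The moral is that the hypothesis ``$\alpha$ avoids $N_M(Go)$ with $M\ge C$'' is tailored to trigger the contraction of the projection (which only needs distance $\ge C$ from the axis), not to contradict a fellow-travelling estimate (which would need distance $\ge r'$). I would rewrite your proof along these lines; it also eliminates all the endpoint bookkeeping.
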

\begin{proof}
If $\alpha$ intersects $N_{\hat r}(h\ax(f))$, let $x$ and $y$ be the entry and exit points of $\alpha$ in $N_{\hat r}(h\ax(f))$ respectively; otherwise there is nothing to prove. If $x'$ and $y'$ are the corresponding projection points to $h\ax(f)$, we have $d(x,x'), d(y,y')\le \hat r$.
By assumption, $\alpha\cap N_M(Go)=\emptyset$. Noting $C\le M$ and $\ax(f)\subseteq Go$,  we have $\alpha\cap N_C(h\ax(f))=\emptyset$. The  $C$--contracting property implies the projection of $\alpha$ to $h\ax(f)$ has diameter at most $C$.  Consequently, we obtain $d(x',y')\le C$ and thus $d(x,y)\le d(x',y')+2\hat r\le C+2\hat r$. Setting  $L_2=C+2\hat r$  completes the proof.
\end{proof}
%\begin{conv}\label{ConventionF} \end{conv}
\begin{conv}\label{ConventionrM} 
From now on, we fix any $\theta\in (0,1]$ and choose the constants
\begin{enumerate}
     \item 
     $M\ge C$ to satisfy the definition of SCC actions (\ref{SCCDefn}) and Lemma \ref{ConicalPointsLem}.  
     \item 
     $r\ge M$ to satisfy Proposition \ref{pro:growthtight} and Shadow Lemma \ref{ShadowLem}. 
     \item 
     $\hat r=\hat r(r,C,M)>M$ to satisfy Lemma \ref{ExistsThetaBarrier}.% and Lemma \ref{ThinRegion}. 
\end{enumerate}

\end{conv}

Given $1\ne h\in G$, let $\mathcal W(\theta,r,h)$ denote the set  of elements $g$ in $G$ having \textbf{no} $(r, h)$--barriers at $\theta$--frequency:  for any geodesic $\gamma$ from $B(o,M)$ to $B(go,M)$, $\gamma$ contains some $\theta$--segment without $(r, h)$--barriers.  

\begin{prop}\label{frequentbarriers}
Let   $G\act \U$ be a non-elementary proper action with a contracting element with purely exponential growth. Fix $\theta\in (0,1]$ and $r>M$ as in Convention \ref{ConventionrM}. Then for any nontrivial element $h\in G$, the set $\mathcal W(\theta,r,h)$    is   negligible.

In addition, if $G\act \U$ is a SCC action, then  $\mathcal W(\theta,r,h)$    is exponentially negligible.
\end{prop}
\begin{proof}
For simplicity, write $ W:=\mathcal W(\theta,r,h)$ in this proof.
Given $g\in  W$,  let $\gamma$ be a  fixed geodesic  from $B(o, M)$ to $B(go, M)$ so that some $\theta$--segment of $\gamma$, denoted by $\alpha$,    is $(r, h)$--barrier-free. Thus,  $\ell(\alpha)\ge \theta \ell(\gamma)$ and $\alpha$ contains no $(r, h)$--barrier. 

We may assume $\alpha$ is a maximal and closed segment with this property. We must have the two endpoints of $\alpha$ lie in $N_M(Go)$, otherwise we can extend $\alpha$ until reaching $N_M(Go)$. Thus, there exist $g_1, g_2\in G$ such that $$d(g_1o, \alpha_-), \; d(g_2o,\alpha_+)\le  M$$ and in particular,  $\hat g=g_1^{-1}g_2$ is $(r, h)$--barrier-free by Definition \ref{FreqBarrierDefn}. As a consequence, this implies that any $g\in  W$ can be written as a product of three elements $g=g_1\cdot  \hat g \cdot (g_2^{-1} g)$ so that 
 $$|d(o,go)- d(o,g_1o)-d(o,g_2o)-d(o,\hat g o)|\le 4M.$$  
By Proposition \ref{pro:growthtight},  the set of elements $\hat g\in \mathcal V_{r, M, h}$ is growth negligible: as $n\to\infty$,
$$
\frac{\sharp N(o,n)\cap \mathcal V_{r, M, h}}{\sharp N(o,n)} \to 0.
$$
If the action is SCC, then $\mathcal V_{r, M, h}$ is growth tight: for some $\epsilon>0$, 
$$
\frac{\sharp N(o,n)\cap \mathcal V_{r, M, h}}{\sharp N(o,n)} \le \mathrm{e}^{-\epsilon n}.
$$
Moreover,  the element $\hat g$ in the above product takes a definite proportion:  $$d(o, \hat go)\ge  \len(\alpha)-d(g_1o, \alpha_-)-d(g_2o, \alpha_+)\ge \ell(\alpha)-2M\ge \theta d(o,go)-2M.$$
Since $G\act \U$ has purely exponential growth, we have $\sharp  N(o,n)\asymp \mathrm{e}^{n\e G}$. By a similar argument as in \cite[Lemma 3.9]{GYANG}, a straightforward computation shows that $ W$ is growth tight if the action is SCC, and is growth negligible in general.   The lemma is then proved.
\end{proof}

\begin{cor}\label{WconvergencePSseries}
Under the assumption of Proposition \ref{frequentbarriers}, if the action is SCC, then the Poincar\'e series associated to $W:=\mathcal W(\theta,r,h)$  is convergent at $s=\e G$. Namely,
$$
\sum_{g\in W} \mathrm{e}^{-\e G d(o,go)}<\infty.
$$
\end{cor}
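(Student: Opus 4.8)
The corollary is a routine consequence of the growth-tightness assertion in Lemma~\ref{frequentbarriers}, via the standard dictionary between growth rates of subsets of $G$ and convergence of the associated Poincar\'e series under the purely-exponential-growth hypothesis. The plan is to split the sum over $W:=\mathcal W(\theta,r,h)$ into annular pieces and estimate each piece using the exponential decay of $\sharp\big(N(o,n)\cap W\big)$ relative to $\sharp N(o,n)$.

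\textbf{Main steps.} First I would recall from Proposition~\ref{pro:growthtight}(1) that an SCC action with a contracting element has purely exponential growth, so there is a constant $D\ge 1$ with
\[
\frac{1}{D}\,\mathrm{e}^{n\e G}\ \le\ \sharp N(o,n)\ \le\ D\,\mathrm{e}^{n\e G}
\]
for all $n\ge 0$. Second, by the growth-tightness statement of Lemma~\ref{frequentbarriers}, there is $\epsilon>0$ with $\sharp\big(N(o,n)\cap W\big)\le D\,\mathrm{e}^{-\epsilon n}\,\sharp N(o,n)$, hence
\[
\sharp\big(N(o,n)\cap W\big)\ \le\ D^2\,\mathrm{e}^{(\e G-\epsilon)n}.
\]
Third, write $W_n := W\cap \big(N(o,n)\setminus N(o,n-1)\big)$ for $n\ge 1$, so that $\sharp W_n\le \sharp\big(N(o,n)\cap W\big)\le D^2\,\mathrm{e}^{(\e G-\epsilon)n}$ and every $g\in W_n$ satisfies $d(o,go)\ge n-1$, giving $\mathrm{e}^{-\e G\, d(o,go)}\le \mathrm{e}^{\e G}\,\mathrm{e}^{-\e G n}$. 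Summing,
\[
\sum_{g\in W}\mathrm{e}^{-\e G\, d(o,go)}\ \le\ \sum_{n\ge 1}\sum_{g\in W_n}\mathrm{e}^{-\e G\, d(o,go)}\ \le\ D^2\,\mathrm{e}^{\e G}\sum_{n\ge 1}\mathrm{e}^{-\epsilon n}\ <\ \infty,
\]
which is the claim.

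\textbf{Expected obstacle.} There is essentially no obstacle: all the work is already packaged into Lemma~\ref{frequentbarriers} (whose proof in turn leans on Proposition~\ref{pro:growthtight}), and what remains is the elementary dyadic/annular summation above. The only point requiring a modicum of care is bookkeeping the constants and the basepoint-dependence (which is harmless since $\U$ is proper and $o$ is fixed throughout), together with the trivial observation that finitely many group elements with $d(o,go)$ small contribute a finite amount and can be absorbed. Hence the corollary follows immediately.
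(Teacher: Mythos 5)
Your proof is correct and follows exactly the route the paper intends: the corollary is stated without proof as an immediate consequence of the growth tightness of $\mathcal W(\theta,r,h)$ from Lemma~\ref{frequentbarriers} together with purely exponential growth from Proposition~\ref{pro:growthtight}(1), and your annular decomposition with the geometric series is the standard way to spell this out. The only (harmless) bookkeeping point is that $N(o,n)$ counts orbit points rather than group elements, which differ by the finite stabilizer of $o$ under a proper action.
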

\begin{rem}
The convergence of the series is the key property in the proof of Theorem \ref{HTSThm} (precisely, Lemma \ref{FullFreqAtTheta}),  where  the SCC action is required. If  this convergence  holds for a proper action with purely exponential growth, we could then weaken the SCC  action to the action with purely exponential growth accordingly in Theorem \ref{HTSThm}. See the further connection to Conjecture \ref{Sullivan}.   %If this   
\end{rem}

\begin{defn}\label{defn:NFB}
Given $1\ne h\in G$, let  $\mathcal{NFB}(\theta,r, h)$  denote the $[\cdot]$-saturation of the set of  boundary points $\xi\in \pU$ that are contained in infinitely many shadows at elements in $\mathcal W(\theta,r,h)$.  Precisely, denoting $W:=\mathcal W(\theta,r,h)$, we define   
\begin{align}\label{eqn:LambdanforNFB}
\Lambda_n:=  \bigcup\limits_{k\ge n}\left(\bigcup\limits_{g \in   A(o, k,\Delta)\cap {W} }  \Pi_{o}^F(go,r)\right) 
\end{align} 
where   $A(o,n,\Delta)$ is defined in (\ref{AnnulusEQ}), and then define $$\mathcal{NFB}(\theta,r, h)= [\cap_{n\ge 1} \Lambda_n]$$ to be the union of $[\cdot]$-classes of points in $\cap_{n\ge 1} \Lambda_n$. % is the   limit supper of the following sequence of sets as $n\to \infty$: $$\bigcup\limits_{v \in   A(o, n,\Delta) \cap {W}}  \Pi_{o}^F(v,r)$$  Hence, the   sequence 
\end{defn}

The next   result clarifies the set $\mathcal{NFB}(\theta,r, h)$, which shall not be used in other places though.  For simplicity, we assume the convergence boundary  is the horofunction boundary.
\begin{lem}
Given $1\ne h\in G$, let $g_n\in \mathcal W(\theta,r,h)$ be a sequence of  elements without $(r, h)$--barriers at $\theta$--frequency. Assume that $\xi$ is contained in a sequence of  shadows  $\Pi_o^F(y_n,r)$ where $y_n:=g_no$. Then there exists a geodesic ray $\gamma$ starting at $o$ and ending at $[\xi]$ such that $\gamma$ contains no  $(r, h)$--barriers at $\theta$--frequency.
\end{lem}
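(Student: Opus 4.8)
The plan is to construct $\gamma$ by the same limiting procedure that underlies Lemma~\ref{ConicalPointsLem}, but fed with the \emph{specific} barrier points $y_n=g_no$, and then to read off the conclusion directly from the defining property of $\mathcal W(\theta,r,h)$ applied to those initial segments of $\gamma$ that run near these $y_n$.

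First I would pass to a convenient subsequence. Since $\xi$ lies in a sequence of shadows $\Pi_o^F(y_n,r)$ and, by properness of the action, only finitely many orbit points lie in any ball, after relabelling we may assume the $y_n=g_no$ are distinct with $d(o,y_n)\to\infty$. For each $n$ the boundary point $\xi$ is a limit in $\bU$ of points of the cone $\Omega_o^F(y_n,r)\subseteq\U$; choosing such points $z_n\to\xi$ and setting $\gamma_n:=[o,z_n]$, each $\gamma_n$ has $y_n$ as an $(r,F)$--barrier, so $\gamma_n$ passes $r$--close to $g_no$ and to $g_nf^{(n)}o$ for some $f^{(n)}\in F$; by Lemma~\ref{BigThree} it therefore runs for a long time inside a bounded neighbourhood of the uniformly $C$--contracting set $g_n\ax(f^{(n)})$. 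A further subsequence, extracted by Arzel\`a--Ascoli (properness of $\U$), converges to a geodesic ray $\gamma$ issuing from $o$. What the proof of Lemma~\ref{ConicalPointsLem} supplies is that (i) $\gamma$ terminates at $[\xi]$, and (ii) along a subsequence (still written $y_n$) there is a uniform constant $\hat r=\hat r(r,F)>0$ with $d(y_n,\gamma)\le\hat r$ for all $n$ --- this second point being obtained by projecting $\gamma$ and the $\gamma_n$'s onto the contracting sets $g_n\ax(f^{(n)})$ and using that a geodesic whose two ends project far apart along such a set must pass uniformly close to it.

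Granting $\gamma$ with properties (i), (ii), the conclusion is purely formal. Enlarging the constant $M$ of Convention~\ref{ConventionrM} if necessary so that $M>\hat r$, for each $n$ choose $t_n$ with $d(\gamma(t_n),y_n)\le\hat r$. Then $\gamma[0,t_n]$ is a geodesic with endpoints in $B(o,M)$ and $B(g_no,M)$, so since $g_n\in\mathcal W(\theta,r,h)$ it must contain a $\theta$--subsegment carrying no $(r,h)$--barrier; that is, $\gamma[0,t_n]$ does not contain $(r,h)$--barriers at $\theta$--frequency in the sense of Definition~\ref{FreqBarrierDefn}. As $t_n\ge d(o,y_n)-\hat r\to\infty$, the ray $\gamma$ has arbitrarily long initial segments free of $(r,h)$--barriers at $\theta$--frequency; reading the notion for rays through initial segments, as in Definition~\ref{FreqContrDefn}(2), this is exactly the assertion that $\gamma$ does not contain $(r,h)$--barriers at $\theta$--frequency.

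I expect the real work to sit in point~(ii): guaranteeing that the limiting ray $\gamma$ genuinely revisits a subsequence of the \emph{given} barrier points $y_n=g_no$, rather than some unrelated barrier sequence manufactured abstractly. This is precisely where the uniform contraction of the system $\f$ and the escaping of the translates $g_n\ax(f^{(n)})$ are needed, in the same spirit as \cite[Lemma~4.4]{YANG22}; once $\gamma$ is known to pass within bounded distance of each $g_no$, everything after that is bookkeeping with the definition of $\mathcal W(\theta,r,h)$.
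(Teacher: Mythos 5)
Your proposal is correct and follows essentially the same route as the paper: approximate $\xi$ by points $z$ in the cones $\Omega_o^F(y_n,r)$, pass to a limiting geodesic ray via Arzel\`a--Ascoli, verify that the limit still passes uniformly close to the given barrier points $y_n=g_no$ and terminates in $[\xi]$, and then read off the absence of $(r,h)$--barriers at $\theta$--frequency from the defining property of $\mathcal W(\theta,r,h)$ applied to the initial segments $\gamma[0,t_n]$. The only divergence is bookkeeping: the paper keeps the constant $r$ on the limit ray via a diagonal argument, whereas you accept the degraded constant $\hat r$ from the projection argument and compensate by taking $M\ge\hat r$, which Convention~\ref{ConventionrM} already permits.
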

\begin{proof}
Let us fix $n\ge 1$ first. For $\xi\in \Pi_o^F(g_no,r)$, there exists a sequence of points $\{z_m: m\ge 1\}\subset \Omega_o^F(g_no,r)$ such that $z_m\to \xi$ and $[o,z_m]$ contains an $(r, F)$--barrier at $g_no$. Up to passage to a subsequence of $\{z_m\}$, there exists some $f'\in F$ so that
\begin{align}\label{eqn:distzm-gno}
\forall m\ge 1, \; d(g_no, [o,z_m]), d(g_nf'o, [o,z_m])\le r.    
\end{align}
This inequality remains true for any limit of $[o,z_m]$ as $m\to\infty$ by Ascoli-Arzela Lemma with respect to the local uniform convergence topology. Here we use the proper assumption of $X$.

We now vary $n\ge 1$. A Cantor diagonal argument further provides a limiting geodesic ray $\gamma$, which has an   $(r, F)$--barrier at every  $g_no$ with $n\ge 1$. As $[o,g_no]$ has no  $(r, f)$--barriers at $\theta$--frequency, any limit  $\gamma$ does so.  It remains to prove that the geodesic ray $\gamma$ ends at a point in $[\xi]$. 

To see this, first note that any limit point $\eta$ of $g_no$ lies in $[\xi]$. This follows by a {straightforward computation using (\ref{eqn:distzm-gno}) that  the difference between $b_\xi$ and $b_\eta$ is uniformly bounded by a constant depending on $r$}. As $d(g_no, \gamma)\le r$ for each $n\ge 1$, the same reasoning shows that the Busemann function associated  with $\gamma$ has also finite difference with $b_\eta$. This implies that $\gamma$ also ends at $[\eta]$, so  the proof is complete.
\end{proof}

\subsection{Non-frequently contracting segments have non-frequent barriers}

We now relate the   notion of elements with frequent barriers  to that of frequently contracting segments (Definition \ref{FreqContrDefn}). The title tells the main result, Proposition \ref{keylemma}, of  this subsection. First of all, we state a preparatory lemma.

\begin{lem}\label{fBarrierimplyfContract}
Fix some $f\in F$ and $h\in E(f)$. Let $\gamma$ be a geodesic segment with  $(r,h)$--barriers at $\theta$--frequency.  Then $\gamma$ is  $(r, C,L)$–contracting at $\theta$--frequency, where $L=d(o,ho)$.  
\end{lem}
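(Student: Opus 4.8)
The plan is to unwind the two definitions and check that an $(r,h)$--barrier literally produces an $r$--close copy of a $C$--contracting geodesic inside the relevant subsegment. Recall from Convention \ref{ConventionF1} (and the discussion preceding Definition \ref{FreqBarrierDefn}) that for $f\in F$ the axis $\ax(f)=E(f)\cdot o$ is $C$--contracting, and moreover we have arranged that any geodesic segment with both endpoints on $\ax(f)$ is $C$--contracting. Since $h\in E(f)$, for any $t\in G$ the translate $t\ax(f)$ is again $C$--contracting, and $t\cdot o$, $t\cdot ho$ are two points on $t\ax(f)$ at distance exactly $d(o,ho)=L$.

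First I would fix a $\theta$--segment $\alpha$ of $\gamma$; by hypothesis it contains an $(r,h)$--barrier, i.e. there is $t\in G$ with
\[
d(t\cdot o,\alpha)\le r,\qquad d(t\cdot ho,\alpha)\le r.
\]
Pick parameters $s_1<s_2$ along $\gamma$ with $\gamma(s_i)$ within $r$ of $t\cdot o$ and $t\cdot ho$ respectively (after possibly swapping, so $s_1\le s_2$), both $s_i$ lying in the time-interval of $\alpha$. Let $p$ be the geodesic subsegment of $t\ax(f)$ joining $t\cdot o$ to $t\cdot ho$; this is a $C$--contracting geodesic by the convention above. The claim is that $\gamma[s_1,s_2]$ stays $r'$--close to $p$ for $r'$ controlled by $C$ (hence $\le \hat r$ in the sense used later), but for the present lemma it is enough to observe a cleaner fact: I only need \emph{one} subsegment of $\alpha$ of length $L$ that is $r$--close to a $C$--contracting geodesic, namely I can take the subsegment of $\gamma$ between $\gamma(s_1)$ and a point at arclength $L$ further along, and compare with $p$. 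Concretely, set $s=(s_1+s_2)/2$ if $s_2-s_1\approx L$, or more carefully choose the interval $[s-L,s+L]$ (or $[s',s'+L]$) sitting inside $\alpha$'s time-interval: since $d(\gamma(s_1),\gamma(s_2))$ differs from $L=d(t\cdot o,t\cdot ho)$ by at most $2r$, the portion of $\gamma$ between these two near-projections has length within $2r$ of $L$, and by quasi-convexity of the $C$--contracting set $p$ (Lemma \ref{BigThree}(1)) every point of $\gamma[s_1,s_2]$ lies within $\sigma(C)+r$ of $p$. Replacing $r$ by $\hat r=\hat r(C,r)$ as in the ambient conventions (or simply absorbing $\sigma(C)$, since the statement allows the contracting-closeness constant to be named $r$), we conclude that $\alpha$ contains a length-$L$ subsegment that is $r$--close to the $C$--contracting geodesic $p$.

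Since this holds for \emph{every} $\theta$--segment of $\gamma$ — each one contains an $(r,h)$--barrier by the frequency hypothesis — Definition \ref{FreqContrDefn}(1) is verified with the triple $(r,C,L)$ and frequency $\theta$, where $L=d(o,ho)$. The only mild subtlety, and the step I would be most careful about, is the bookkeeping that the length-$L$ window $[s-L,s+L]$ (or a length-$2L$ window as literally demanded by \eqref{rclosetopEQ}) genuinely fits inside the time-interval of the given $\theta$--segment rather than spilling out; this is handled by first passing, at the cost of enlarging $r$ to $\hat r$, to a maximal such barrier configuration and using $d(o,ho)=L$ together with the length estimate above, exactly the kind of argument already carried out in Lemma \ref{ExistsThetaBarrier}. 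With the constants matched up this way the lemma follows.
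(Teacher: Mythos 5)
Your proof is correct and follows the same route as the paper's, which disposes of the lemma in a single sentence by unravelling the definitions: a barrier $t$ gives two points $to,\,tho\in N_r(\gamma)$ with a $C$--contracting geodesic between them, and the intervening subsegment of $\gamma$, of length within $2r$ of $L=d(o,ho)$, stays close to it. You are in fact more careful than the paper about exactly the two points you flag --- the closeness constant degrades from $r$ to roughly $r+\sigma(C)$ via quasi-convexity, and the formal version of Definition \ref{FreqContrDefn}(1) asks for a window of length $2L$ rather than $L$ --- both of which the paper's proof silently absorbs; this constant slippage is harmless for how the lemma is used downstream, where Lemma \ref{keylemma} already works with the enlarged constant $\hat r$.
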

\begin{proof}The proof follows by unravelling the definitions. By Definition \ref{FreqBarrierDefn},  if some $\theta$-segment of $\gamma$ contains an $(r,h)$--barrier, then $N_r(\gamma)$ contains two points $o,ho$ in $\ax(f)$. By the convention at the beginning of the section, $[o,ho]$ is $C$--contracting. Thus, $\gamma$ is  $(r, C,L)$–contracting at $\theta$--frequency.
\end{proof}

\begin{lem}\label{fBarrierimplyfContractRay}
Given $f\in F$, let $h_n\in E(f)$ be a sequence of elements with $d(o,h_no)\to\infty$.  Assume that  for any $\theta\in (0,1]$ and for each fixed $h_n$, $\gamma$ is a geodesic ray with  $(r, h_n)$--barriers at $\theta$--frequency. Then $\gamma$ is $(r, C)$–regularly contracting.
\end{lem}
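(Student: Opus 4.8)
The statement should follow by combining Lemma \ref{fBarrierimplyfContract} with the definition (Def.~\ref{FreqContrDefn}) of being $(r,C)$--regularly contracting, which asks for $(r,C,L)$--contracting at $\theta$--frequency for \emph{every} $L>0$ and \emph{every} $\theta\in(0,1]$. The key point is that $\gamma$ is assumed to carry $(r,h_n)$--barriers at $\theta$--frequency for \emph{every} $\theta$ and for a sequence $h_n$ with $d(o,h_no)\to\infty$, so Lemma \ref{fBarrierimplyfContract} applies with the choice $L=d(o,h_no)$, which can be made arbitrarily large. First I would fix an arbitrary $\theta\in(0,1]$ and an arbitrary target length $L_0>0$. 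Using $d(o,h_no)\to\infty$, choose $n$ with $L:=d(o,h_no)\ge L_0$; by hypothesis $\gamma$ has $(r,h_n)$--barriers at $\theta$--frequency, so Lemma \ref{fBarrierimplyfContract} gives that $\gamma$ is $(r,C,L)$--contracting at $\theta$--frequency. Since each $\theta$--segment then contains a subsegment of length $L\ge L_0$ that is $r$--close to a $C$--contracting geodesic, and that subsegment contains a further subsegment of length exactly $L_0$ (any subsegment of a $C$--contracting geodesic's $r$--neighbourhood stays $r$--close to the same $C$--contracting geodesic), $\gamma$ is $(r,C,L_0)$--contracting at $\theta$--frequency as well.

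The one genuine subtlety is the passage from ``geodesic segment'' (Lemma \ref{fBarrierimplyfContract}) to ``geodesic ray'' and the role of the threshold $R_0$ in Definition \ref{FreqContrDefn}(2): being $(r,C,L)$--contracting at $\theta$--frequency for a ray means there is $R_0$ such that every initial segment $\gamma[0,t]$ with $t\ge R_0$ has the property. So I would argue as follows: fix $\theta$ and $L_0$; pick $n$ with $L=d(o,h_no)\ge L_0$; the hypothesis that $\gamma$ (the ray) has $(r,h_n)$--barriers at $\theta$--frequency should be read as: there is $R_0=R_0(n,\theta)$ so that every long enough initial segment $\gamma[0,t]$ contains $(r,h_n)$--barriers in each of its $\theta$--segments. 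Applying Lemma \ref{fBarrierimplyfContract} to each such $\gamma[0,t]$ yields that $\gamma[0,t]$ is $(r,C,L)$--contracting at $\theta$--frequency, hence (shrinking to length $L_0$) $(r,C,L_0)$--contracting at $\theta$--frequency, uniformly in $t\ge R_0$. Therefore $\gamma$ as a ray is $(r,C,L_0)$--contracting at $\theta$--frequency. Since $\theta\in(0,1]$ and $L_0>0$ were arbitrary, $\gamma$ is $(r,C)$--regularly contracting by Definition \ref{FreqContrDefn}(3).

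The main (very mild) obstacle is bookkeeping: making sure the order of quantifiers is respected — we need the barrier hypothesis \emph{for every $\theta$}, which is exactly what is assumed, and we need $L$ unbounded, which is exactly $d(o,h_no)\to\infty$; the thresholds $R_0$ depend on both $n$ and $\theta$ but that is harmless since ``regularly contracting'' quantifies over $\theta$ and $L$ separately and allows a separate $R_0$ for each. No new geometric input beyond Lemma \ref{fBarrierimplyfContract} and the elementary observation that a subsegment of a $C$--contracting geodesic (or of its $r$--neighbourhood) witnesses contraction with the same constants is needed.
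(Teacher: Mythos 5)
Your proposal is correct and follows essentially the same route as the paper, whose proof is a one-line appeal to Lemma \ref{fBarrierimplyfContract} together with the observations that $\theta$ is arbitrary and $d(o,h_no)\to\infty$; you merely spell out the (correct) monotonicity in $L$ and the segment-to-ray bookkeeping that the paper leaves implicit.
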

\begin{proof}
Denoting $L_n=d(o,h_no)$, Lemma \ref{fBarrierimplyfContract} implies that $\gamma$ is $(r, C,L_n)$–contracting at $\theta$--frequency.  The conclusion follows as $\theta\in (0,1]$ is arbitrary and $L_n\to\infty$.  
\end{proof}

We now look at the set of   $(r,C,L)$--contracting rays at $\theta$--frequency. Precisely, 
\begin{defn}\label{defn:FC}
Given $L>0$, let $\mathcal{FC}(\theta,r, C,L)$  denote the set of  all $[\cdot]$--classes   $[\xi]\in [\pU]$ so that there is some geodesic ray $\gamma$ in $\U$ starting at $o$ and ending at $[\xi]$, which  is $(r,C,L)$--contracting at $\theta$--frequency.     
\end{defn}

%The relation with the previously defined set $\mathcal{NFB}(\theta,r, f)$ with no frequent $(r,f)$--barriers is explained in 
The following proposition  shall be crucially used in the proof of  \ref{MainThm}. Before stating the result, we make precise the data.  %Roughly speaking, the union of  $\mathcal{FC}(\theta,r, C,L)$ and $\mathcal{NFB}(\theta,r, f)$ (in Definition \ref{defn:NFB}) covers the whole limit set.  
 
%Recall that we are concerned with the $\mu_o$--measure of $\mathcal{FC}(\theta,r, C,L)$ and $\mu_o$ is supported on $\Lambda_r^F Go$,

\begin{conv}  \label{ConventionfLh}
Let $r\ge M\ge C, \hat r=\hat r(r,C,M)>0$ be given as in Convention \ref{ConventionrM}. We fix some $f\in F$ and denote  $L_0=\max\{L_1,L_2\}$, where $L_1=L_1(C,M)$ and $L_2=L_2(\hat r,C)$ are given by Lemma \ref{ExistsThetaBarrier} and Lemma \ref{ThinRegion} accordingly.  
 
The connection of $f\in F$ and the constant  $C$ is that we assume any segment in $\ax(f)$ is  $C$--contracting. See the discussion at the beginning of this section.
\end{conv}
%Let $ f\in F, L_0>0$ be given in \ref{ConventionfLh}.
By definition, $\mathcal{FC}(\theta, r, C,L)$ and  $\mathcal{NFB}(\theta,\hat r, h)$ are both $[\cdot]$-saturated sets.
\begin{prop}\label{keylemma}
For given $L>L_0$, choose  some $h\in E(f)$ with  $d(o,ho)>2L$. Then for any $\theta\in (0,1]$,  the  set   $[\Lambda_r^F(Go)]\setminus \mathcal{FC}(\theta, r, C,L)$  is contained in   $\mathcal{NFB}(\theta,\hat r, h)$.    
\end{prop}

\begin{proof}
Given $\xi\in [\Lambda_r^F(Go)]$, let  $\gamma$ be any geodesic ray ending at $[\xi]$; that is, any unbounded sequence of points on $\gamma$ tends to some point in $[\xi]$. Abusing language we write $\gamma=[o,\xi]$. If  $\xi$ is not contained  in $\mathcal{FC}(\theta, r, C,L)$, then unveiling the definitions,  $\gamma$    is not $( r, C, L)$–contracting at $\theta$--frequency. Namely, there exist a sequence of positive numbers $R_n\to\infty$ with the following property :  
\begin{equation}
\tag{$\circledast$}\begin{aligned}
    \label{NoBarrierPrpty}
    &\text{for each } n\ge 1, \gamma[0,R_n]\text{  contains a }\theta\text{-interval }\alpha_n \text{ so that no subsegment  }\\
    &\text{of }\alpha_n \text{  with length }L\text{  is } r\text{–close to a }C\text{-contracting segment.}   
    \end{aligned}
\end{equation}
By Lemma \ref{fBarrierimplyfContract}, $\gamma[0,R_n]$ has no $( r,h)$--barriers at $\theta$--frequency. 

If the point $\gamma[R_n]$ for some $n$ lies in the closed neighborhood $N_M(Go)$, there exists $g_n\in G$ such that $d(g_no, \gamma[R_n])\le M$. By Lemma \ref{ExistsThetaBarrier}, any geodesic from $B(o,M)$ to $B(g_no,M)$ has no $(\hat r, h)$--barriers at $\theta$--frequency. This means that $g_n$ has no $(\hat r, h)$--barriers at $\theta$--frequency, so $g_n\in \mathcal{W}(\theta,\hat r, h)$ by definition. 

Otherwise, let us assume that $\gamma[R_n]$ lies outside $N_M(Go)$. The remainder of the proof is then to find another time $R_n'$ so that $\gamma[0,R_n']$ has the above property (\ref{NoBarrierPrpty}): 
\begin{itemize}
    \item it is not $( r, C, L)$–contracting at $\theta$--frequency, and the new point $\gamma[R_n']$ lies in $N_M(Go)$. 
\end{itemize} To that end, our discussion is divided into the following two cases. Write explicitly $\alpha_n=\gamma[s_n,t_n]$ for some $0\le s_n<t_n\le R_n$.

\textbf{Case 1.} Assume that  $\gamma[0,R_n]$ contains a point $\gamma[R_n']$ in $ N_M(Go)$ for some time $t_n\le R_n'<R_n$. As $\alpha_n$ is contained in $\gamma[0,R_n']$ and $R_n'\le R_n$ is an earlier time,  the segment $\gamma[0,R_n']$     has the above  property (\ref{NoBarrierPrpty}). Also,    $d(g_no, \gamma[R_n'])\le M$ for some $g_n\in G$. We are done in this case.

\textbf{Case 2.}   After $\alpha_n=\gamma[s_n,t_n]$,  $\gamma[0,R_n]$ contains no point in $N_M(Go)$. That is to say, for any $t\in [t_n, R_n]$, we have $d(\gamma[t], Go)>M$. We shall now extend    the segment $\gamma[0,R_n]$ to a future time $R_n'\ge R_n$, which  still has the    property (\ref{NoBarrierPrpty}), so that $d(g_no, \gamma[R_n'])\le M$ for some $g_n\in G$.  See Figure \ref{fig:extending}.

Indeed,   let $R_n'>R_n$ be the least number so that $\gamma[R_n']$ is contained in $N_M(Go)$. The number $R_n'<\infty$ exists, since  $\gamma$ terminates at $[\cdot]$--class of a conical point $\xi\in [\Lambda_r^F(Go)]$, which by Corollary \ref{ConicalPointsIntersUnifNbhd}  intersects  $N_{M}(Go)$ in  unboundedly many times.  %And note $\hat r<M$ by Convention \ref{ConventionrM}. 

Let  $g_n\in G$ satisfy  $d(g_no, \gamma[R_n'])\le M$.  To conclude the proof in Case (2), we need to verify 
\begin{claim}
$g_n$ has no $(\hat r,h)$--barriers at $\theta$--frequency.    
\end{claim}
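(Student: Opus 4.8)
The plan is to produce, for \emph{every} geodesic $\sigma$ between $B(o,M)$ and $B(g_no,M)$, a $\theta$-segment of $\sigma$ carrying no $(r,h)$-barrier; by Definition \ref{FreqBarrierDefn} this is precisely the assertion $g_n\in\mathcal W(\theta,r,h)$. The heart of the matter is the single geodesic $\sigma_0:=\gamma[0,R_n']$, which runs from $o\in B(o,M)$ to $\gamma[R_n']\in B(g_no,M)$ by the choice of $g_n$ (recall $d(g_no,\gamma[R_n'])\le M$); the passage to an arbitrary competitor $\sigma$ will then be supplied by Lemma \ref{ExistsThetaBarrier}.

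To treat $\sigma_0$, I first single out the relevant $\theta$-interval. Since $R_n'>R_n$, the interval $\alpha_n=\gamma[s_n,t_n]$ is strictly shorter than a $\theta$-interval of $\sigma_0$, so I set $\beta_n:=\gamma[s_n,\,s_n+\theta R_n']$. This is a legitimate $\theta$-interval of $\sigma_0$ (because $s_n\le(1-\theta)R_n\le(1-\theta)R_n'$), it contains $\alpha_n$, and its ``overhang'' $\gamma[t_n,\,s_n+\theta R_n']$ lies inside $\gamma[t_n,R_n']$. By the Case~2 hypothesis together with the minimality of $R_n'$ as the first return of $\gamma$ to $N_M(Go)$, this subsegment stays outside $N_M(Go)$ except at its terminal point; extending backwards to the last time $t^\ast<t_n$ at which $\gamma$ meets $N_M(Go)$, the arc $\gamma(t^\ast,R_n')$ is a full connected component of $\sigma_0\setminus N_M(Go)$, so Lemma \ref{ThinRegion} controls its intersections with neighbourhoods of translates of $\ax(f)$.

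Next I show $\beta_n$ carries no $(\hat r,h)$-barrier. Suppose some $t\in G$ witnessed one, so $to,tho\in N_{\hat r}(\beta_n)$. Let $u,v$ be the first and last points of $\beta_n$ lying in $N_{\hat r}(t\ax(f))$; using that $t\ax(f)$ is $C$-contracting and Lemma \ref{BigThree}, the sub-arc $[u,v]_{\beta_n}$ lies in $N_{\hat r}$ of the $C$-contracting geodesic $[to,tho]$ and has length at least $d(o,ho)-2\hat r$, which is large compared with $L$ once $d(o,ho)$ is (the hypothesis $d(o,ho)>2L$, with $L$ chosen large relative to $\hat r$ and to the thin-region constant of Lemma \ref{ThinRegion}). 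Decomposing $\beta_n=\alpha_n\cup(\text{overhang})$, the arc $[u,v]_{\beta_n}$ breaks at the junction into two pieces, one of which therefore has length exceeding $L$. If this long piece lies in $\alpha_n$, it is a length-$L$ subsegment of $\alpha_n$ that is $\hat r$-close to the $C$-contracting geodesic $[to,tho]$, contradicting property \eqref{NoBarrierPrpty}. If it lies in the overhang, hence in the component $\gamma(t^\ast,R_n')$, it is a subsegment of that component of length $>L$ contained in $N_{\hat r}(t\ax(f))$, contradicting Lemma \ref{ThinRegion} (used with $\hat r$ in place of $r$, which only enlarges the constant, still below $L$). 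Hence $\beta_n$ carries no $(\hat r,h)$-barrier, i.e.\ $\sigma_0$ does not have $(\hat r,h)$-barriers at $\theta$-frequency.

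Finally, let $\sigma$ be an arbitrary geodesic from $B(o,M)$ to $B(g_no,M)$. Were every $\theta$-segment of $\sigma$ to contain an $(r,h)$-barrier, Lemma \ref{ExistsThetaBarrier} (applicable since $d(o,ho)>2L>L_1$) would force $\sigma_0$ to contain a $\theta$-segment with an $(\hat r,h)$-barrier, contrary to the previous paragraph; hence $\sigma$ has a $\theta$-segment with no $(r,h)$-barrier. As $\sigma$ was arbitrary, $g_n\in\mathcal W(\theta,r,h)$, which is the Claim.

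The crux is the third paragraph: ensuring that a hypothetical barrier on the composite segment $\beta_n$ cannot ``hide'' by straddling the junction between the non-contracting piece $\alpha_n$ (governed by \eqref{NoBarrierPrpty}) and the thin piece $\gamma(t^\ast,R_n')$ (governed by Lemma \ref{ThinRegion}). This is exactly what forces the quantitative relation between $d(o,ho)$ and $L$ and the demand that $L$ dominate the thin-region constant; a secondary nuisance is tracking the inflation from $r$ to $\hat r$ across Lemmas \ref{ExistsThetaBarrier} and \ref{ThinRegion}.
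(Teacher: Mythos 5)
Your argument is correct and is essentially the paper's: you run the contradiction on the specific geodesic $\gamma[0,R_n']$, isolate a $\theta$--segment that decomposes as $\alpha_n$ plus an overhang lying outside $N_M(Go)$, bound the overhang's intersection with $N_r(t\ax(f))$ by Lemma \ref{ThinRegion}, and force a length-$L$ subsegment of $\alpha_n$ close to a $C$--contracting geodesic, contradicting \eqref{NoBarrierPrpty} (your dichotomy ``long piece in $\alpha_n$ vs.\ long piece in the overhang'' is just a reorganization of the paper's subtraction $2L-L_0>L$). The one caveat is your closing step: Lemma \ref{ExistsThetaBarrier} as stated only yields that $\sigma_0$ contains \emph{some} $\theta$--segment with an $(\hat r,h)$--barrier, which does not literally contradict the barrier-freeness of the particular segment $\beta_n$ — one needs the localized form of that lemma (the transferred barrier lands within bounded distance of the original one, hence inside $\beta_n$), a refinement that the paper's own proof also invokes only implicitly.
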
  
\begin{proof}[Proof of the claim]
The segment $\alpha_n':=\gamma[s_n, R_n']$  is the union of $\alpha_n$ and $\gamma[t_n, R_n']$. As $\alpha_n$ is a $\theta$--interval  of $\gamma[0,R_n]$, it  follows that $\alpha_n'$ is a $\theta$--interval  of $\gamma[0,R_n']$. 

Now,  assume by contradiction that $g_n$ has  $(\hat r,h)$--barriers at $\theta$--frequency. By definition, any geodesic (e.g. say $\gamma[0, R_n']$)  between $B(o,M)$ and $B(g_no, M)$ has $(\hat r,h)$--barriers at $\theta$--frequency. In particular, the $\theta$--interval $\alpha_n'$ has $(\hat r,h)$--barriers. Hence, we have some $g\in G$ so that $go, gho$ lie in the $\hat r$--neighborhood of $\alpha_n'$. That is, $\alpha_n'\cap N_{\hat r}(g\ax(f))$ has diameter at least $d(o,ho)>2L$.
\begin{figure}[h]
    \centering

\tikzset{every picture/.style={line width=0.75pt}} %set default line width to 0.75pt        

\begin{tikzpicture}[x=0.75pt,y=0.75pt,yscale=-1,xscale=1]
%uncomment if require: \path (0,300); %set diagram left start at 0, and has height of 300

%Straight Lines [id:da9006420673135085] 
\draw    (74,98) -- (163.5,97) ;
\draw [shift={(74,98)}, rotate = 359.36] [color={rgb, 255:red, 0; green, 0; blue, 0 }  ][fill={rgb, 255:red, 0; green, 0; blue, 0 }  ][line width=0.75]      (0, 0) circle [x radius= 3.35, y radius= 3.35]   ;
%Straight Lines [id:da8993525852164252] 
\draw    (163.5,97) -- (260.5,95) ;
\draw [shift={(260.5,95)}, rotate = 358.82] [color={rgb, 255:red, 0; green, 0; blue, 0 }  ][fill={rgb, 255:red, 0; green, 0; blue, 0 }  ][line width=0.75]      (0, 0) circle [x radius= 3.35, y radius= 3.35]   ;
\draw [shift={(163.5,97)}, rotate = 358.82] [color={rgb, 255:red, 0; green, 0; blue, 0 }  ][fill={rgb, 255:red, 0; green, 0; blue, 0 }  ][line width=0.75]      (0, 0) circle [x radius= 3.35, y radius= 3.35]   ;
%Shape: Brace [id:dp8916856449042507] 
\draw  [line width=0.75]  (357.5,87) .. controls (357.45,82.33) and (355.1,80.02) .. (350.43,80.07) -- (273.62,80.86) .. controls (266.95,80.93) and (263.6,78.64) .. (263.55,73.97) .. controls (263.6,78.64) and (260.29,81) .. (253.62,81.07)(256.62,81.04) -- (170.43,81.93) .. controls (165.76,81.98) and (163.45,84.33) .. (163.5,89) ;
%Shape: Polygon Curved [id:ds9430557214859216] 
\draw   (54.5,91) .. controls (71.5,50) and (183.5,156) .. (258.5,147) .. controls (333.5,138) and (370.5,57) .. (398.5,100) .. controls (426.5,143) and (319,178) .. (228,173) .. controls (137,168) and (37.5,132) .. (54.5,91) -- cycle ;
%Straight Lines [id:da4641644515410166] 
\draw    (260.5,95) -- (308.5,95) ;
%Straight Lines [id:da9072002727111781] 
\draw    (357.5,94) -- (435.5,94) ;
\draw [shift={(438.5,94)}, rotate = 180] [fill={rgb, 255:red, 0; green, 0; blue, 0 }  ][line width=0.08]  [draw opacity=0] (8.93,-4.29) -- (0,0) -- (8.93,4.29) -- cycle    ;
\draw [shift={(357.5,94)}, rotate = 0] [color={rgb, 255:red, 0; green, 0; blue, 0 }  ][fill={rgb, 255:red, 0; green, 0; blue, 0 }  ][line width=0.75]      (0, 0) circle [x radius= 3.35, y radius= 3.35]   ;
%Straight Lines [id:da34167526035409246] 
\draw    (308.5,95) -- (357.5,94) ;
\draw [shift={(308.5,95)}, rotate = 358.83] [color={rgb, 255:red, 0; green, 0; blue, 0 }  ][fill={rgb, 255:red, 0; green, 0; blue, 0 }  ][line width=0.75]      (0, 0) circle [x radius= 3.35, y radius= 3.35]   ;
%Straight Lines [id:da695585341294255] 
\draw    (359,123) -- (357.67,99) ;
\draw [shift={(357.5,96)}, rotate = 86.82] [fill={rgb, 255:red, 0; green, 0; blue, 0 }  ][line width=0.08]  [draw opacity=0] (8.93,-4.29) -- (0,0) -- (8.93,4.29) -- cycle    ;
\draw [shift={(359,123)}, rotate = 266.82] [color={rgb, 255:red, 0; green, 0; blue, 0 }  ][fill={rgb, 255:red, 0; green, 0; blue, 0 }  ][line width=0.75]      (0, 0) circle [x radius= 3.35, y radius= 3.35]   ;
%Shape: Brace [id:dp7582199211604639] 
\draw  [line width=0.75]  (163.5,103) .. controls (163.6,107.67) and (165.98,109.95) .. (170.65,109.85) -- (204.35,109.13) .. controls (211.02,108.99) and (214.4,111.25) .. (214.5,115.92) .. controls (214.4,111.25) and (217.68,108.85) .. (224.35,108.71)(221.35,108.77) -- (250.65,108.15) .. controls (255.32,108.05) and (257.6,105.67) .. (257.5,101) ;

% Text Node
\draw (146,99.4) node [anchor=north west][inner sep=0.75pt]    {$s_{n}$};
% Text Node
\draw (261.5,98.4) node [anchor=north west][inner sep=0.75pt]    {$t_{n}$};
% Text Node
\draw (301,98.4) node [anchor=north west][inner sep=0.75pt]    {$R_{n}$};
% Text Node
\draw (348,58.4) node [anchor=north west][inner sep=0.75pt]    {$R'_{n}$};
% Text Node
\draw (184.29,116.12) node [anchor=north west][inner sep=0.75pt]    {$\ell ( \alpha _{n}) =\theta \cdot R_{n}$};
% Text Node
\draw (445,100.4) node [anchor=north west][inner sep=0.75pt]    {$\xi $};
% Text Node
\draw (219,148.4) node [anchor=north west][inner sep=0.75pt]    {$N_{M}( Go)$};
% Text Node
\draw (348,129.4) node [anchor=north west][inner sep=0.75pt]    {$h_{n} o$};
% Text Node
\draw (69,101.4) node [anchor=north west][inner sep=0.75pt]    {$o$};
% Text Node
\draw (258,54.4) node [anchor=north west][inner sep=0.75pt]    {$\alpha '_{n}$};

\end{tikzpicture}
    \caption{Case (2) in the proof of Proposition \ref{keylemma}}
    \label{fig:extending}
\end{figure}

As  $\gamma[t_n, R_n']$  lies outside $N_M(Go)$, we obtain from Lemma \ref{ThinRegion} that $\gamma[t_n, R_n']\cap N_{\hat r}(g\ax(f))$ has diameter at most $L_2=L_2(\hat r,C)$. Noting $\alpha_n'=\alpha_n\cup \gamma[t_n, R_n']$, $\alpha_n\cap N_{\hat r}(g\ax(f))$ has diameter at least $2L-L_2>L$. By the assumption before Definition \ref{FreqBarrierDefn}, since any segment with two endpoints in $g\ax(f)$ is $C$--contracting, we see that $\alpha_n$ contains a segment of length $L$, that is $\hat r$–close to a $C$--contracting segment. This  contradicts  (\ref{NoBarrierPrpty}), so we proved that $g_n\in \mathcal{W}(\theta,\hat r,h)$.
\end{proof}

%In general, for any $r\le R_n$, $\gamma[0,r]$ has a $\theta$--interval $\alpha_n'$ so that no subsegment of $\alpha_n'$ with length $L$   is $r$–close to a $C$--contracting segment. Indeed, if $\alpha_n\cap \gamma[r, R_n]=\emptyset$, then as $r\le R_n$, choosing $\alpha_n'$ inside $\alpha_n$ would suffice.
In summary, we produced a sequence of elements $g_n\in \mathcal{W}(\theta,\hat r,h)$ so that $d(g_no, \gamma)\le M\le \hat r$. The geodesic $\gamma=[o,\xi]$ terminates at the $[\cdot]$-class $[\xi]$. We then have $\xi\in [\Pi_{o}^F(g_no,\hat r)]$. Hence,  $\xi$ lies in $\mathcal{NFB}(\theta,\hat r, h)$ according to Definition \ref{defn:NFB}.  The proposition is proved.
\end{proof}

In order to prove that $\mathcal{FC}(\theta,r, C,L)$ is $\mu_o$--full, by Proposition \ref{keylemma} we only need to prove that $\mathcal{NFB}(\theta,\hat r, h)$ is $\mu_o$--null. 
The latter relies on Proposition \ref{frequentbarriers}.

\begin{lem}\label{FullFreqAtTheta}
Assume that the action $G\act \U$ is SCC. 
Fix  a nontrivial element  $h\in E(f)$.
Then for any $\theta\in (0,1)$, the set $\mathcal{NFB}(\theta,\hat r, h)$ is $\mu_o$--null.  
\end{lem}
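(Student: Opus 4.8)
The plan is a routine Borel--Cantelli estimate in which the substantive work has already been carried out: the shadow lemma (Lemma~\ref{ShadowLem}) controls the $\mu_o$--measure of each partial shadow, while the growth tightness of $W := \mathcal W(\theta,r,h)$ established in Lemma~\ref{frequentbarriers} --- precisely the place where statistical convex-cocompactness is used --- makes the relevant Poincar\'e series converge. Concretely, I would set $S_n := \bigcup_{v\in A(o,n,\Delta)\cap W}\Pi_o^F(v,r)$, so that by construction $\mathcal{NFB}(\theta,r,h)=\limsup_n S_n=\bigcap_{N\ge 1}\bigcup_{n\ge N}S_n$, and reduce the statement to showing $\sum_{n\ge 1}\mu_o(S_n)<\infty$; the Borel--Cantelli lemma then forces $\mu_o(\mathcal{NFB}(\theta,r,h))=0$.

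To estimate $\mu_o(S_n)$ I would use countable subadditivity together with Lemma~\ref{ShadowLem}, recalling $\Pi_o^F(v,r)\subseteq\Pi_o(v,r)$ and that the finite point stabiliser $G_o$ of the proper action only inflates the count by a bounded factor. This yields, with the shadow-lemma constant $C_1=C_1(F,r)$,
\[
\mu_o(S_n)\;\le\;\sum_{v\in A(o,n,\Delta)\cap W}\mu_o\big(\Pi_o^F(v,r)\big)\;\le\; C_1\sum_{\substack{g\in W\\|d(o,go)-n|\le\Delta}}\mathrm e^{-\omega\, d(o,go)}.
\]
Summing over $n$ and interchanging the order of summation, each $g\in W$ is counted in at most $2\Delta+1$ of the inner sums (the integers $n$ with $|d(o,go)-n|\le\Delta$), so everything reduces to the single estimate $\sum_{g\in W}\mathrm e^{-\omega\, d(o,go)}<\infty$, giving $\sum_{n\ge 1}\mu_o(S_n)\le (2\Delta+1)\,C_1\sum_{g\in W}\mathrm e^{-\omega\, d(o,go)}$.

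Finally I would invoke the corollary to Lemma~\ref{frequentbarriers}: since the action is SCC, $\mathcal W(\theta,r,h)$ is growth tight (and the action has purely exponential growth by Proposition~\ref{pro:growthtight}), hence $\sum_{g\in W}\mathrm e^{-\e G\, d(o,go)}<\infty$; because $\omega\ge\e G$ (cf.\ \cite[Prop.~6.6]{YANG22}) and $d(o,go)\ge 0$, the series with exponent $\omega$ converges too. Plugging this back completes the argument.

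There is no genuine obstacle here once the earlier machinery is available. The one point that needs a little care is that consecutive annuli $A(o,n,\Delta)$ overlap, so one must avoid double-counting when summing the shadow estimates over $n$ --- this is exactly what the harmless factor $2\Delta+1$ takes care of. A secondary bookkeeping subtlety is matching the shadow lemma, stated for $G$--equivariant conformal densities, to the $G$--quasi-equivariant quasi-conformal family $\{\mu_x\}$ of Theorem~\ref{HTSThm}; the multiplicative $\lambda$--distortions are simply absorbed into the constants $C_0,C_1$.
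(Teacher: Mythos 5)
Your proposal is correct and is essentially the paper's own argument: the paper also combines the upper bound of the shadow lemma with the convergence of the Poincar\'e series $\sum_{g\in W}e^{-\omega d(o,go)}$ coming from growth tightness of $W=\mathcal W(\theta,r,h)$ under the SCC hypothesis, merely phrasing the Borel--Cantelli step as a contradiction (the tails $\mu_o(\Lambda_n)$ would stay bounded below by $\mu_o(\mathcal{NFB})>0$ while being dominated by the tails of a convergent series). Your explicit handling of the annulus overlap factor $2\Delta+1$ and of the reduction from exponent $\e G$ to $\omega\ge\e G$ just makes precise what the paper leaves implicit.
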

\begin{proof}
In this proof, we write $W:=\mathcal{W}({\theta,\hat r,h}),\; \mathcal{NFB}:=\mathcal{NFB}({ \theta,\hat r, h})$ for simplicity. The following Borel-Cantelli argument is standard, but we write it for completeness. 

By way of contradiction,  assume that $\mu_o(\mathcal{NFB})>0$.  Then  for all $n\gg 0$,  $$\mu_o(\Lambda_n)\ge \frac{1}{2}\mu_o(\mathcal{NFB})>0$$ where the set $\Lambda_n$ is defined in (\ref{eqn:LambdanforNFB}). Note that $\omega\ge \e G$ by Lemma \ref{UniqueConformalDensity}. Applying Proposition \ref{frequentbarriers} (with $r=\hat r$), $W\subseteq G$ is exponentially negligible, so by Corollary \ref{WconvergencePSseries} the following Poincar\'e series  associated with $W$ is convergent:
$$
\p_{W}(\omega, o,o)=\sum_{g\in W} e^{-\omega d(o,go)}<\infty 
$$

By  Lemma    \ref{ShadowLem} we have  $e^{-\omega d(o,go)} \asymp \mu_o(\Pi_o(go,\hat r))$ for each $g\in G$. Note that  each $go$ is contained in at most $(2\Delta)$   annulus  $A(o, k,\Delta)$ with $|d(o,go)-k|\le \Delta$. Consequently,   we obtain a uniform lower bound on the partial sum  of   $\p_{{W}}(\omega,o,o)$ for any $n\gg 0$:
$$
\sum\limits_{k\ge n}\left(\sum\limits_{v \in   A(o, k,\Delta)\cap {W} }    e^{-\omega k}\right) \succ \mu_o(\Lambda_n)
$$ thus  contradicting   the convergence of $\p_{W}(\omega,o,o)$. This shows that $\mathcal{NFB}$ is $\mu_o$--null.  
\end{proof}

\subsection{Completion of the proof of Theorem \ref{HTSThm}}
Recall that the constants  $r\ge M\ge C, \hat r=\hat r(r,C,M), L_0$ and $h\in E(f)$ are given in Convention \ref{ConventionfLh}. 

We apply Proposition \ref{RCasCountableIntersection} (with $r:=\hat r$), which provides the constant  $\tilde r=\tilde r(\hat r)>0$ with the following relation:
$$
\mathcal{RC}(\tilde r, C)  \supset \bigcap_{L_0\le L\in \mathbb N}\left (\bigcap_{\theta\in (0,1]\cap \mathbb Q} \mathcal{FC}(\theta,\hat r, C,L)\right)$$
Note that  $\mathcal{RC}(\tilde r, C)$ comprise the $[\cdot]$-classes of boundary points in $\pU$ represented by $(\tilde r, C)$-regularly contracting rays. 

According to Proposition \ref{keylemma}, for each $L\gg L_0$, we may choose  some $h\in E(f)$ so that $\mathcal{FC}(\theta,r, C,L)$ contains the  set  $[\Lambda_r^F(Go)]\setminus \mathcal{NFB}(\theta,\hat  r, h)$. Note that $[\Lambda_r^F(Go)]$ is $\mu_o$-full by Lemma \ref{HSTLem}, and    $\mathcal{NFB}(\theta,\hat r, h)$ is $\mu_o$-null by Lemma \ref{FullFreqAtTheta}. Thus,  for every large $L>1$ and $\theta\in (0,1]$, $\mathcal{FC}(\theta,\hat r, C,L)$ it is $\mu_o$--full. Hence,  their countable intersection  is still $\mu_o$--full, so $\mathcal {RC}(\tilde r,C)$ is $\mu_o$--full.  The proof is complete.

\begin{proof}[Proof of \ref{MainThm}]
The horofunction boundary $\hU$ is a convergence boundary for $X$ by Theorem \ref{HorobdryConvergence} where all boundary points are non-pinched. By Theorem \ref{ConformalDensityExists}, there exists a   $\e G$-dimensional conformal density on  $\mathcal C= \hU$, which is unique in the sense of Theorem \ref{UniqueConformalDensity}. So we can apply Theorem \ref{HTSThm}.

By the above proof, $\mathcal{RC}(\tilde r, C)$ comprise the $[\cdot]$-classes of boundary points in $\hU$ represented by $(\tilde r, C)$-regularly contracting rays. By Theorem~\ref{GQRtheorem}, the equivalent classes of regularly contracting rays  form a subset of the sublinearly Morse boundary. Hence, $\mathcal{RC}(\tilde r, C)$  is a subset of sublinear Morse directions and   \ref{MainThm}  follows  from Theorem \ref{HTSThm}. 
\end{proof}

\bibliographystyle{amsplain}
 \bibliography{bibliography}

\end{document}